 \newtheorem{thm}{Theorem}[section]
 \newtheorem{prop}[thm]{Proposition}
 \newtheorem{lem}[thm]{Lemma}
 \newtheorem{cor}[thm]{Corollary}
\theoremstyle{definition}
 \newtheorem{exm}[thm]{Example}
 \newtheorem{defn}[thm]{Definition}
 \newtheorem{rem}[thm]{Remark}
 \numberwithin{equation}{section}
 \numberwithin{figure}{section}
\renewcommand{\le}{\leqslant}
\renewcommand{\ge}{\geqslant}
\title[Prolongation of solutions and Lyapunov-like stability results]{Prolongation of solutions and Lyapunov stability for Stieltjes dynamical systems}
\author{Lamiae Maia}
\address{Lamiae Maia
 \newline
  LAMA Laboratory,
   \newline
  Mathematics Department,
 \newline
 Faculty of Sciences,
 \newline
  Mohammed V University in Rabat,
 \newline
  Rabat,
  \newline
   Morocco.
  }
\email{lamiae\_maia@um5.ac.ma}
\author{Noha El Khattabi}
\address{Noha El Khattabi
 \newline
  LAMA Laboratory,
   \newline
  Mathematics Department,
 \newline
 Faculty of Sciences,
 \newline
  Mohammed V University in Rabat,
 \newline
  Rabat,
  \newline
   Morocco.
  }
\email{noha.elkhattabi@fsr.um5.ac.ma}
\author{Marl\`ene Frigon}
\address{Marl\`ene Frigon
 \newline
Department of Mathematics and Statistics,
 \newline
  University of Montreal,
 \newline
   Montreal, H3C 3J7,
  \newline
  Canada.}
\email{marlene.frigon@umontreal.ca}
\subjclass[2020]{Primary 34A06, Secondary 34D20, 34A34, 34B60}
\keywords{Lyapunov stability, dynamical system, Lyapunov function, stable equilibrium, asymptotic stability}
\begin{document}

\begin{abstract}
In this article, we introduce Lyapunov-type results to investigate the stability of the trivial solution of a Stieltjes dynamical system. We utilize prolongation results to establish the global existence of the maximal solution. Using Lyapunov's second method, we establish results of (uniform) stability and (uniform) asymptotic stability by employing a Lyapunov function. Additionally, we present examples and real-life applications to study asymptotic stability of equilibria in two population dynamics models.
\end{abstract}

\maketitle

\section{Introduction}
In recent years, the field of differential equations has witnessed a notable surge in interest surrounding Stieltjes differential equations. This renewed focus is largely driven by the pursuit of results that not only unify existing findings but also extend those related to classical derivatives~\cite{FerTojoVill,FP,PM,PM2,MEF1,Mar,MM,SS,SS-2} through the  Stieltjes derivative.

The distinction between classical derivatives and their Stieltjes counterparts lies in the nature of the differentiation process. While classical derivatives are based on limits involving small increments of function values, the Stieltjes derivative operates with respect to a derivator $g:\mathbb{R}\to \mathbb{R}$. This derivator is typically assumed to be left-continuous and nondecreasing, characteristics that allow for a broader range of applications, particularly in contexts where certain processes may exhibit discontinuities and/or stationary periods~\cite{AFNT,FerTojoVill,FP,PM,PM2,PM3,PMM,MEF1,MEF2}. Such scenarios are common in various fields, including population dynamics, and physics, where classical differentiation has limitations in capturing the complexities of real-world phenomena.

Typically, investigations into first-order Stieltjes differential equations and systems focus on solutions defined on bounded intervals. However, Larivière in~\cite[Chapter~4]{ThesisFL} turned his attention to Stieltjes differential equations on the positive real half-line. In doing so, he provided results related to the prolongation of solutions and the existence of the maximal solution. The motivation behind exploring these equations on the positive real half-line lies in the observation that many natural processes evolve over time without any inherent time limit, while some phenomena can exhibit finite-time blow-up, leading to abrupt changes or singularities. By considering the Stieltjes derivative, we aim to capture these nuanced behaviors and enhance our understanding of more complex dynamical systems from a common perspective, facilitated by the unification that this derivative provides~\cite{PR}.

In the study of dynamical systems, the stability of {\it equilibria} holds significant importance. Here, the term {\it equilibrium} refers to a state that does not change dynamically, in the sense that if a system starts at an equilibrium point, it will stay in that state indefinitely. The interaction between species within many ecosystems often relies on feedback loops, which makes stability crucial for their functioning, and resilience. Although stability is typically desirable, there are some scenarios where stable equilibrium at zero can be critical and raise concerns for several raisons. For instance, in ecological systems, this concern arises from the vulnerability to perturbations of certain species, which may increase the risk of their extinction.

Within the realm of dynamical systems theory, Lyapunov's Second Method~\cite{L1} stands as a fundamental approach for assessing the stability properties of a system near an equilibrium.  This stability analysis provides insights into whether small perturbations around an equilibrium lead to convergence (stability) or divergence (instability) of solutions. The core of this method lies in the concept of the {\it Lyapunov function}. This function was the subject of numerous works in the classical literature starting from the works~\cite{H1,H2,Kaymakcalan,Ko,L1,Yoshizawa} and references therein.

In this paper, we extend  Lyapunov stability results from the classical literature to the Stieltjes case by means of the Stieltjes derivative. In doing so, we address first the prolongation of solutions considering the Stieltjes dynamical system:
\begin{equation}\label{eq:dyn-system}
\begin{aligned}
     \mathbf{x}_g'(t) &= \mathbf{f}(t,\mathbf{x}(t)) \quad \text{for $g$-almost all $t\ge t_0\ge 0$,}
     \\
      \mathbf{x}(t_0) &= \mathbf{x}_0\in\mathbb{R}^n;
\end{aligned}
\end{equation}
where $ \mathbf{f}=(f_1,\dots,f_n):[0,+\infty) \times \mathbb{R}^n \to \mathbb{R}^n$. We start by deriving corollary results using compact sets to characterize the maximal solution of~\eqref{eq:dyn-system}. Then, based on a generalized version of the Gr\"{o}nwall lemma~\cite{ThesisFL,GallMarSlav2025} for the Stieltjes derivative, we establish the existence of global solutions over the whole positive real half-line. The prolongation of solutions will be essentially used later on to establish Lyapunov-like stability results for the system~\eqref{eq:dyn-system}, particulary when studying the asymptotic behaviour of solutions around an equilibrium. Our stability study is inspired by results from classical theory and works such as~\cite{HT,Ko,Y2001,YLXD}, which address dynamic equations on time scales and impulsive differential equations. To the best of our knowledge, this is the first work to introduce Lyapunov's method adapted to Stieltjes differential equations.

This  paper is organized as follows: we present the theoretical framework and some preliminaries in Section~2.  In Section~3, we focus on prolongation of solutions and the characterization of the maximal solution to deduce the existence of a global solution. Section~4 is devoted to Lyapunov-like stability results using Lyapunov's second method. We start by defining the stability notions: (uniform) stability and (uniform) asymptotic stability. Afterward, we present stability results for each type of stability inspired from the works~\cite{FedGraMesToon,GallGraMes}, to extend some classical results from~\cite{HT,Ko}. In the last section of this paper, we suggest two applications to dynamics of population to study the asymptotic stability of some critical equilibria: in the first application, we  model the dynamics of a population with Allee's effect~\cite{BereAnCour,SteSuth} negatively impacted by train vibrations, and another application related to the dynamics of a population of {\it Cyanobacteria} in a cultured environment, keeping track of ammonia levels in the process.
\section{Preliminaries}
For $[a,b]\subset\mathbb{R}$, and $\mathbf{u}:[a,b]\to\mathbb{R}^n$ a regulated function,  the symbols $\mathbf{u}(t^+)$ and $\mathbf{u}(t^-)$  will be used to denote
\begin{align*}
\mathbf{u}(t^+)=\lim\limits_{\epsilon\to 0^+} \mathbf{u}(t+\epsilon), & \text{ for all }t\in[a,b), \\
\mathbf{u}(t^-)=\lim\limits_{\epsilon\to 0^-} \mathbf{u}(t-\epsilon), & \text{ for all }t\in (a,b].
\end{align*}

Throughout this work, we will consider $g:\mathbb{R}\to \mathbb{R}$ a monotone, nondecreasing and left-continuous function, also known as a {\it derivator}. We denote the set of discontinuity points of~$g$ by
$$
D_g=\{t\in \mathbb{R}: g(t^+ )- g(t)>0\}.
$$
In addition, we denote
$$
C_g =\{s\in \mathbb{R}: g \text{ is constant on } (s-\epsilon, s+\epsilon) \text{ for some } \epsilon>0\}.
$$
The set $C_g$ is an open of the usual topology and it can be written as a countable union of disjoint intervals
$$C_g =\bigcup_{n\in \Lambda}(a_n ,b_n),$$
with $\Lambda \subset\mathbb{N}$, $a_n,b_n \in\mathbb{R}$. We set $N_g=\{a_n,b_n: n\in \Lambda\}\setminus D_g$.

The function $g$ defines a Lebesgue-Stieltjes measure $\mu_{g}:\mathcal{LS}_g\to [0,\infty]$ over $\mathcal{LS}_g$ the $\sigma$-algebra of subsets of $\mathbb{R}$ containing all Borel sets~\cite{AL,PR,Rudin}. We refer to the measurability with respect to the $\sigma$-algebra $\mathcal{LS}_g$ as {\it$g$-measurability}. For any interval $[a,b) \in\mathcal{LS}_g$, we have $\mu_g([a,b))=g(b)-g(a)$, and $\mu_g(\{t\})=g(t^+)-g(t)$ for all $t\in \mathbb{R}$. Moreover, $\mu_g(C_g)=\mu_g(N_g)=0$, the reader is referred to~\cite{PR} for more details.
We denote by  $\mathcal{L}_{g}^{1}([a,b),\mathbb{R})$ the quotient space of the set of $\mu_g$-integrable functions on $[a,b)\subset\mathbb{R}$ under the equivalence relation:
 $$
 f\sim h :\iff f(t)=h(t)\quad \mu_g\text{-almost everywhere}.
 $$
We define the norm $\|\cdot\|_{\mathcal{L}_{g}^{1}([a,b),\mathbb{R})}$ on the space $\mathcal{L}_{g}^{1}([a,b),\mathbb{R})$ by
$$
\|f\|_{\mathcal{L}_{g}^{1}([a,b),\mathbb{R})}:=\int_{[a,b)} |f(t)|\, d\mu_g(t), \quad \text{ for every } f\in \mathcal{L}_{g}^{1}([a,b),\mathbb{R}).
$$
In the sequel, given $I\subset\mathbb{R}$ and a functional space $Z(I,\mathbb{R})$, we set $Z(I,\mathbb{R}^n):=\prod_{i=1}^{n}Z(I,\mathbb{R})$, and we denote $Z_{loc}(I,\mathbb{R}^n)$ the space of functions $\mathbf{u}:I \to \mathbb{R}^n$ satisfying $\mathbf{u}|_{[a,b]}\in Z([a,b],\mathbb{R}^n)$ for every interval $[a,b] \subset I$.

The derivator $g$ defines a pseudometric $\rho:\mathbb{R}\times \mathbb{R} \to \mathbb{R}^+$ given by
$$
\rho(s,t)=|g(s)-g(t)|, \quad \text{ for every } s,t \in \mathbb{R}.
$$
We denote $\tau_g$ the topology induced by the pseudometric $\rho$. The topology $\tau_g$ is not necessarily Hausdorff as mentioned in~\cite[Section~2]{FP}.
\begin{defn} A set $A \subset \mathbb{R}$ is called {\it $g$-open\/} if, for every $t \in A$, there exists $r > 0$ such that
$$
\{s \in \mathbb{R} : \rho(t,s)< r\} \subset A.
 $$
 \end{defn}
If $t \in D_g$, then the interval $(t-\epsilon,t]$ is a $g$-open set. The reader is referred to~\cite[Section~2]{FP} for more properties of the topology $\tau_g$.

In the following definition, we define the $g$-derivative of a real-valued function.
\begin{defn}\label{df:g derivative}
Let $u:[a,b] \to \mathbb{R}$ be a function. The derivative of $u$ with respect to $g$ at a point $t_0 \in [a,b]\setminus C_g$, is defined as:
$$
u_g'(t)=
\begin{dcases}
\lim\limits_{s\to t} \frac{u(s)-u(t)}{g(s)-g(t)} & \mbox{if }  t \notin D_g,\\
\frac{u(t^+)-u(t)}{g(t^+)-g(t)} & \mbox{if } t \in D_g,
\end{dcases}
$$
provided that the limit exists, in this case $u$ is said to be $g$-differentiable at $t$.
\end{defn}

In the next proposition, we appeal to the $g$-derivative of the composition of two functions  established in~\cite[Proposition~3.15]{MarThesis}. Another version of this formula can be found in~\cite[Proposition~4.1]{FMarTo-OnFirstandSec}.
\begin{prop}\label{prop:g derivative of composition}
Let $t\in\mathbb{R}\setminus C_g$, $f:\mathbb{R}\to \mathbb{R}$ and $h$ a real function defined on a neighborhood of $f(t)$. We assume that there exist $h'(f(t))$, $f_g'(t)$ and that the function $h$ is continuous at $f(t^+)$.
Then, the composition $h\circ f$ is $g$-differentiable in $t_0$, and we have the formulae
$$
(h\circ f)_g'(t)=
\begin{dcases}
h'(f(t))f_g'(t) & \mbox{if }  t \notin D_g,  \\
\frac{h(f(t^+))-h(f(t))}{f(t^+)-f(t)}f_g'(t) & \mbox{if } t \in D_g.
\end{dcases}
$$
\end{prop}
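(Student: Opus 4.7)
The plan is to split the proof according to Definition~\ref{df:g derivative}, treating separately the cases $t \in D_g$ and $t \notin D_g$.

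For the jump case $t \in D_g$, the argument is essentially algebraic. Since $f_g'(t)$ exists, the right limit $f(t^+)$ exists and $f_g'(t) = (f(t^+) - f(t))/(g(t^+) - g(t))$. Continuity of $h$ at $f(t^+)$ ensures $(h\circ f)(t^+) = h(f(t^+))$, whence Definition~\ref{df:g derivative} applied to $h \circ f$ gives $(h\circ f)_g'(t) = (h(f(t^+)) - h(f(t)))/(g(t^+) - g(t))$. When $f(t^+) \neq f(t)$, multiplying and dividing by $f(t^+) - f(t)$ yields the stated formula; when $f(t^+) = f(t)$, both sides vanish and the identity holds trivially.

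For the continuous case $t \notin D_g$, I would first verify that $f$ is continuous at $t$: since $g(t^+) = g(t)$ and $f_g'(t)$ is finite, the factorisation $f(s) - f(t) = [(f(s) - f(t))/(g(s) - g(t))](g(s) - g(t))$ shows that $f(s) \to f(t)$ as $s \to t$ through values with $g(s) \neq g(t)$; as $t \notin C_g$, values with $g(s) = g(t)$ can be controlled by sandwiching them between nearby points at which $g$ strictly increases. I would then distinguish two subcases. If there is a punctured neighbourhood of $t$ on which $f(s) \neq f(t)$ whenever $g(s) \neq g(t)$, I split the quotient as
$$
\frac{h(f(s)) - h(f(t))}{g(s) - g(t)} = \frac{h(f(s)) - h(f(t))}{f(s) - f(t)} \cdot \frac{f(s) - f(t)}{g(s) - g(t)},
$$
and pass to the limit: the first factor tends to $h'(f(t))$ because $f(s) \to f(t)$ with $f(s) \neq f(t)$, while the second tends to $f_g'(t)$ by hypothesis. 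If instead $f(s_n) = f(t)$ along a sequence $s_n \to t$ with $g(s_n) \neq g(t)$, then the corresponding difference quotient for $f$ vanishes, forcing $f_g'(t) = 0$, and the quotient for $h \circ f$ also vanishes along this sequence, so the formula degenerates to $0 = h'(f(t)) \cdot 0$.

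The main obstacle is gluing these two subcases: $f$ may alternate along the approach to $t$ between points where $f(s) = f(t)$ and points where $f(s) \neq f(t)$. I would handle this by arguing that every sequence $s_n \to t$ with $g(s_n) \neq g(t)$ decomposes into at most two subsequences, each handled by one of the subcases, and that both yield the same limit $h'(f(t)) f_g'(t)$. A more streamlined alternative is to use the linearisation $f(s) = f(t) + f_g'(t)(g(s) - g(t)) + o(g(s) - g(t))$ together with the differentiability of $h$ at $f(t)$, which allows the product rule to be read off directly without splitting cases.
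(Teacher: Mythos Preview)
The paper does not prove this proposition at all: it is stated with an explicit attribution to \cite[Proposition~3.15]{MarThesis} (with an alternative reference to \cite[Proposition~4.1]{FMarTo-OnFirstandSec}) and no argument is given in the paper itself. There is therefore no ``paper's own proof'' to compare your attempt against.

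That said, your sketch is essentially correct and follows the standard route for chain rules of this kind. Two minor remarks. First, in the case $t \notin D_g$ your digression about ``values with $g(s) = g(t)$'' is unnecessary: the limit defining $(h\circ f)_g'(t)$ in Definition~\ref{df:g derivative} only runs over $s$ with $g(s) \neq g(t)$, so those points never enter and you do not need to argue continuity of $f$ beyond what the factorisation already gives. Second, as you implicitly observe, the formula in the statement is ill-posed when $t \in D_g$ and $f(t^+) = f(t)$, since the displayed quotient is then $0/0$; your handling (both sides equal $0$) is the intended interpretation. The linearisation alternative you mention at the end is indeed the cleanest way to dispose of the subcase bookkeeping in the continuous case.
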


Throughout this paper, let $\|\cdot\|$ denotes the maximum norm in $\mathbb{R}^n$ defined by
$$
\|\mathbf{x}\|=\max\{|x_1|,\dots,|x_n|\} \quad  \text{ for } \mathbf{x}=(x_1,\dots,x_n)\in\mathbb{R}^n.
$$

Now, we recall the $g$-continuity notion  first introduced in~\cite{FP}.
\begin{defn}\label{df:g-cont}
Let $\mathbf{u} : [a,b]\to \mathbb{R}^n$. We say that $\mathbf{u}$ is {\it $g$-continuous} at $t\in [a,b]$ if, for every $\epsilon >0$, there exists $\delta>0$ such that
    $$
    \forall s\in [a,b], \qquad |g(s)-g(t)|< \delta \implies \|\mathbf{u}(s)-\mathbf{u}(t)\|<\epsilon.
    $$
\end{defn}
We denote $\mathcal{C}_g([a,b],\mathbb{R}^n)$ the set of $g$-continuous functions $\mathbf{u}:[a,b]\to \mathbb{R}^n$ on the interval $[a,b]$. The following proposition relates the regularity of $f$ and $g$, the reader is referred to~\cite[Proposition~3.2]{FP}.
\begin{prop}\label{prop:g-conti funct constant where g constant}
  If $\mathbf{u}:[a,b] \to\mathbb{R}^n$ is $g$-continuous on $[a,b]$, then the following statements hold:
\begin{enumerate}[$(1)$]
    \item $\mathbf{u}$ is left-continuous at every $t \in (a,b]$.
    \item If $g$ is continuous at $t \in [a,b)$, then so is $\mathbf{u}$.
    \item If $g$ is constant on some $[c,d] \subset [a,b]$, then so is $\mathbf{u}$.
\end{enumerate}
\end{prop}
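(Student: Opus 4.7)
The proof reduces to unwinding the $g$-continuity definition against three specific regularity properties of $g$ (left-continuity, two-sided continuity at a point, and constancy on a subinterval). In each case, the strategy is the same: pick $\varepsilon>0$, use $g$-continuity of $\mathbf{u}$ to obtain a threshold $\delta>0$ controlling $|g(s)-g(t)|$, and then translate that threshold back into a standard-topology neighbourhood of $t$ using the assumed regularity of $g$.

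For $(1)$, fix $t\in(a,b]$ and $\varepsilon>0$. Pick $\delta>0$ from Definition~\ref{df:g-cont}. Because $g$ is left-continuous at $t$ (a standing assumption on the derivator), there exists $\eta>0$ with $|g(s)-g(t)|<\delta$ whenever $s\in(t-\eta,t]$; applying the $g$-continuity of $\mathbf{u}$ gives $\|\mathbf{u}(s)-\mathbf{u}(t)\|<\varepsilon$, which is exactly left-continuity. For $(2)$, the same argument goes through on both sides: if $g$ is continuous at $t\in[a,b)$, then continuity of $g$ produces an $\eta>0$ such that $|s-t|<\eta$ forces $|g(s)-g(t)|<\delta$, and then $g$-continuity of $\mathbf{u}$ yields ordinary continuity.

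For $(3)$, suppose $g$ is constant on $[c,d]\subset[a,b]$. Fix any $t\in[c,d]$ and any $\varepsilon>0$, and let $\delta>0$ be as in Definition~\ref{df:g-cont}. For every $s\in[c,d]$, we have $g(s)=g(t)$, so $|g(s)-g(t)|=0<\delta$, and therefore $\|\mathbf{u}(s)-\mathbf{u}(t)\|<\varepsilon$. Since $\varepsilon$ was arbitrary, $\mathbf{u}(s)=\mathbf{u}(t)$, so $\mathbf{u}$ is constant on $[c,d]$.

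No step is genuinely obstructive; the only subtlety worth flagging is that in $(1)$ one must invoke the left-continuity of $g$ (which is part of the standing hypothesis on the derivator), and in $(2)$ one needs continuity of $g$ from both sides at $t$ in order to conclude ordinary (two-sided) continuity of $\mathbf{u}$. The argument does not require any deeper property of the topology $\tau_g$ beyond the fact that $\rho(s,t)=|g(s)-g(t)|$ is the pseudometric defining it.
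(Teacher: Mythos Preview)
Your proof is correct and is precisely the standard $\varepsilon$--$\delta$ argument one obtains by unwinding Definition~\ref{df:g-cont} against the three regularity assumptions on $g$. The paper itself does not supply a proof of this proposition but refers the reader to \cite[Proposition~3.2]{FP}, where the same direct argument is carried out; so your approach coincides with the intended one.
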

$g$-continuous functions presents interesting measurability properties, see~\cite[Corollary~3.5]{FP}. Let $\mathcal{BC}_g([a,b],\mathbb{R}^n)$ denotes the Banach space of the bounded functions of $\mathcal{C}_g([a,b],\mathbb{R}^n)$  with respect to the supremum norm.

In the next theorem, we focus on the particular case when the derivator $g:\mathbb{R}\to\mathbb{R}$ is increasing, and continuous on an interval $[a,b]\subset \mathbb{R}$, to derive a generalized version of Rolle's theorem and the Mean Value theorem for real-valued functions in the context of Stieltjes differentiation.

\begin{thm}\label{thm:g-Rolle theorem}
Let $g:\mathbb{R}\to\mathbb{R}$ be a left-continuous and nondecreasing function, continuous and increasing on an interval $[a,b]\subset \mathbb{R}$. Let $f:[a,b]\to \mathbb{R}$ be $g$-continuous on $[a,b]$ and $g$-differentiable on $(a,b)$ satisfying $f(a)=f(b)$. Then, there exists $c\in (a,b)$ such that $f_g'(c)=0$.
\end{thm}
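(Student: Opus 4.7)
The plan is to reduce the problem to a classical-looking Rolle argument by exploiting the strong regularity hypotheses on $g$ over $[a,b]$. First I would observe that, because $g$ is continuous on $[a,b]$, we have $D_g\cap[a,b]=\emptyset$, and because $g$ is (strictly) increasing on $[a,b]$, no nondegenerate subinterval of $[a,b]$ lies in $C_g$. Consequently, for every $t\in(a,b)$ the definition of the $g$-derivative collapses to the single-branch formula
$$
f_g'(t)=\lim_{s\to t}\frac{f(s)-f(t)}{g(s)-g(t)}.
$$
Moreover, by Proposition~\ref{prop:g-conti funct constant where g constant}(2) the $g$-continuity of $f$, together with the continuity of $g$ on $[a,b]$, implies that $f$ is continuous on $[a,b]$ in the usual topological sense.

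Next I would invoke compactness: the continuous function $f$ attains its maximum $M$ and its minimum $m$ on $[a,b]$. If $M=m$, then $f$ is constant and $f_g'(c)=0$ for every $c\in(a,b)$. Otherwise, the equality $f(a)=f(b)$ forces at least one of these extrema to be attained at an interior point $c\in(a,b)$; suppose without loss of generality that it is the maximum, the other case being symmetric. At this point, $f(s)-f(c)\le 0$ for all $s$ in a neighbourhood of $c$, and the strict monotonicity of $g$ on $[a,b]$ gives $g(s)-g(c)>0$ for $s>c$ and $g(s)-g(c)<0$ for $s<c$. Thus
$$
\frac{f(s)-f(c)}{g(s)-g(c)}\le 0 \text{ for } s>c, \qquad \frac{f(s)-f(c)}{g(s)-g(c)}\ge 0 \text{ for } s<c,
$$
and taking the two-sided limit — which exists by the $g$-differentiability of $f$ at $c$ — yields simultaneously $f_g'(c)\le 0$ and $f_g'(c)\ge 0$, hence $f_g'(c)=0$.

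An alternative, essentially equivalent route would be to push everything through the homeomorphism $g\colon[a,b]\to[g(a),g(b)]$: set $\widetilde f := f\circ g^{-1}$, check that $\widetilde f$ is continuous on $[g(a),g(b)]$ and classically differentiable on $(g(a),g(b))$ with $\widetilde f{}'(g(t))=f_g'(t)$, and then apply the ordinary Rolle theorem to $\widetilde f$. I expect the main (and only genuine) obstacle to be the bookkeeping of the opening paragraph — i.e., verifying that the abstract Stieltjes apparatus ($D_g$, $C_g$, the piecewise $g$-derivative, the $g$-continuity notion) really does degenerate to its classical counterpart under the assumed regularity of $g$ on $[a,b]$; once this reduction is in place, either the interior-extremum argument or the change of variables transfers the classical proof line by line.
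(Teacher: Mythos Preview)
Your proof is correct and follows essentially the same route as the paper's own argument: both deduce ordinary continuity of $f$ from Proposition~\ref{prop:g-conti funct constant where g constant}, locate an interior extremum via the constant/nonconstant dichotomy, and conclude by the standard sign analysis of the one-sided difference quotients using the strict monotonicity of $g$. Your preliminary remarks on $D_g\cap[a,b]=\emptyset$ and the alternative change-of-variables route are additional but the core argument is the same.
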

\begin{proof}
 As $f$ is $g$-continuous on $[a,b]$, it results from Proposition~\ref{prop:g-conti funct constant where g constant} that $f$ is continuous on $[a,b]$.  We set
$$
m:=\min_{t\in[a,b]}f(t),\quad M:=\max_{t\in[a,b]}f(t).
$$
If $m=M$, then $f$ is constant on $[a,b]$ and for all $t\in (a,b)$, $f_g'(t)=0$. Otherwise if $m<M$, as $f(a)=f(b)$ we have
$$
f(a)\neq m,\text{ and } f(b)\neq M.
$$
Without loss of generality, we can assume that
$$
f(a)\neq M,\text{ and } f(b)\neq M,
$$
if it is not the case, we consider $m$ instead. Thus, there exists $c\in (a,b)$ such that $f(c)=M$. Therefore, there exists $\delta>0$ such that $(c-\delta,c+\delta)\subset(a,b)$ and $f(s)\le f(c)=M$ for all $s\in(c-\delta,c+\delta)$. As $s\nearrow c$, $g(s)\le g(c)$, and
$$
\lim\limits_{s\to c^-} \frac{f(s)-f(c)}{g(s)- g(c)}\ge 0.
$$
While, as $s\searrow c$, $g(s)\ge g(c)$, and
$$
\lim\limits_{s\to c^+} \frac{f(s)-f(c)}{g(s)- g(c)}\le 0.
$$
Since $f$ is $g$-differentiable at $c$, we deduce that $f_g'(c)=0$.
\end{proof}
As a corollary of Theorem~\ref{thm:g-Rolle theorem}, we state a version of the Mean Value theorem involving the Stieltjes derivative.
\begin{cor}\label{cor:g-MeanValueTheorem}
Let $g:\mathbb{R}\to\mathbb{R}$ be a left-continuous and nondecreasing function, continuous and increasing on an interval $[a,b]\subset \mathbb{R}$. Let $f:[a,b]\to \mathbb{R}$ be $g$-continuous on $[a,b]$ and $g$-differentiable on $(a,b)$. Then, there exists $c\in (a,b)$ such that $f_g'(c)=\frac{f(b)-f(a)}{g(b)-g(a)}$.
\end{cor}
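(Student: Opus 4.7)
The plan is to reduce Corollary~\ref{cor:g-MeanValueTheorem} to the Stieltjes-Rolle theorem (Theorem~\ref{thm:g-Rolle theorem}) by introducing a tilted auxiliary function that absorbs the linear trend from $f(a)$ to $f(b)$ measured against $g$. Concretely, I would define
\[
h(t) := f(t) - \frac{f(b)-f(a)}{g(b)-g(a)}\bigl(g(t)-g(a)\bigr), \qquad t\in[a,b],
\]
which is well defined because $g$ is increasing on $[a,b]$, so $g(b)-g(a)>0$. A direct computation gives $h(a)=f(a)=h(b)$, so the boundary hypothesis of Theorem~\ref{thm:g-Rolle theorem} is satisfied.

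Next I would verify that $h$ fits into the framework of Theorem~\ref{thm:g-Rolle theorem}. The identity map $g\mapsto g$ is trivially $g$-continuous on $[a,b]$ (take $\delta=\epsilon$ in Definition~\ref{df:g-cont}), so $t\mapsto g(t)-g(a)$ is $g$-continuous; combined with the $g$-continuity of $f$, this makes $h$ a $g$-continuous function on $[a,b]$. For the $g$-differentiability on $(a,b)$, observe that since $g$ is continuous on $[a,b]$ we have $D_g\cap(a,b)=\emptyset$, hence for any $t\in(a,b)$,
\[
\bigl(g(\cdot)-g(a)\bigr)_g'(t)=\lim_{s\to t}\frac{g(s)-g(t)}{g(s)-g(t)}=1,
\]
where the limit is genuine because $g$ is strictly increasing near $t$. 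By linearity of the $g$-derivative, $h$ is $g$-differentiable on $(a,b)$ with
\[
h_g'(t)=f_g'(t)-\frac{f(b)-f(a)}{g(b)-g(a)}.
\]

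Applying Theorem~\ref{thm:g-Rolle theorem} to $h$ yields a point $c\in(a,b)$ with $h_g'(c)=0$, which rearranges to the desired equality $f_g'(c)=\dfrac{f(b)-f(a)}{g(b)-g(a)}$. There is no substantive obstacle here: the argument mirrors the classical derivation of the Mean Value theorem from Rolle's theorem. The only point that deserves care is confirming that the auxiliary linear term $g(t)-g(a)$ behaves well under $g$-continuity and $g$-differentiation, which is immediate from the continuity and strict monotonicity of $g$ on $[a,b]$.
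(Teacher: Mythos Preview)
Your proof is correct and follows essentially the same approach as the paper: both define the auxiliary function $f(t)-\dfrac{f(b)-f(a)}{g(b)-g(a)}(g(t)-g(a))$ (the paper subtracts an additional constant $f(a)$, which is immaterial), verify it satisfies the hypotheses of Theorem~\ref{thm:g-Rolle theorem} with equal endpoint values, and read off the conclusion from the vanishing of its $g$-derivative at some interior point. Your write-up is in fact slightly more explicit than the paper's in justifying the $g$-continuity and $g$-differentiability of the linear correction term.
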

\begin{proof}
  Let us consider the function $F:[a,b]\to \mathbb{R}$ defined by
  $$
  F(t)=f(t)-\left(\frac{f(b)-f(a)}{g(b)-g(a)}(g(t)-g(a))+f(a)\right),\quad\text{ for all }t\in [a,b].
  $$
Clearly $F$ is $g$-continuous on $[a,b]$ and $g$-differentiable on $(a,b)$, satisfying $F(a)=F(b)$.  Moreover, for all $t\in (a,b)$, we have that
$$
F_g'(t)=f_g'(t)-\frac{f(b)-f(a)}{g(b)-g(a)}.
$$
Applying Theorem~\ref{thm:g-Rolle theorem}, there exists $c\in(a,b)$ such that $F_g'(c)=0$. Hence, there exists $c\in(a,b)$ such that $f_g'(c)=\frac{f(b)-f(a)}{g(b)-g(a)}$.
\end{proof}

Now, we present the notion of $g$-absolute continuity.
\begin{defn}\label{df:g-abs cont}
A map $F:[a,b]\to \mathbb{R}$ is {\it $g$-absolutely continuous}, if, for every $\varepsilon > 0$, there exists $\delta > 0$ such that, for any family
$\{(a_i , b_i)\}_{i=1}^{i=n}$ of pairwise disjoint open subintervals of $[a,b]$,
$$
\sum_{i=1}^{n} g(b_i)-g(a_i) <\delta \Rightarrow \sum_{i=1}^{n} |F(b_i)-F(a_i)| < \varepsilon.
$$
\end{defn}

We denote by $\mathcal{AC}_g ([a,b],\mathbb{R})$ the vector space of $g$-absolutely continuous functions $F: [a,b]\to \mathbb R$ on the interval $[a,b]$.
In~\cite[Theorem~5.4]{PR}, a Fundamental Theorem of Calculus for Lebesgue-Stieltjes integrals was introduced.
\begin{thm}[Fundamental Theorem of Calculus for the Lebesgue-Stieltjes integral]\label{th:Fund-Th}
Let $a,b \in \mathbb{R}$ be such that $a<b$, and let $F:[a,b] \to \mathbb{R}$. The following assumptions are equivalent.
\begin{enumerate}
\item[{\rm(1)}] The function $F$ is {\em $g$-absolutely continuous}, i.e.  to each $\varepsilon>0$, there is some $\delta>0$ such that, for any family $\{(a_j,b_j)\}_{j=1}^{m}$ of pairwise disjoint open subintervals of $[a,b]$,
$$
\sum_{j=1}^{m}(g(b_j)-g(a_j))<\delta \quad
\implies \quad
\sum_{j=1}^m|F(b_j)-F(a_j)|<\varepsilon.
$$
\item[{\rm(2)}] The function $F$ satisfies the following conditions:
\begin{enumerate}
\item[{\rm(a)}] there exists $F'_g(t)$ for $g$-almost all $t\in [a,b)$;
\item[{\rm(b)}] $F'_g \in \mathcal{L}^1_g([a,b),\mathbb{R})$;
\item[{\rm(c)}] for each $t  \in [a,b]$, we have
$$
F(t)=F(a)+\int_{[a,t)} F'_g(s) \, d\mu_g(s).
$$
\end{enumerate}
\end{enumerate}
\end{thm}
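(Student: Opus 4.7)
The proof splits into two implications. For the easier direction $(2) \implies (1)$, assume $F$ admits the integral representation with $F'_g \in \mathcal{L}^1_g([a,b),\mathbb{R})$. By the absolute continuity of the Lebesgue-Stieltjes integral with respect to its measure, for every $\varepsilon>0$ there exists $\delta>0$ such that any $g$-measurable set $E \subset [a,b)$ with $\mu_g(E)<\delta$ satisfies $\int_E |F'_g|\,d\mu_g < \varepsilon$. Given a family $\{(a_j,b_j)\}_{j=1}^m$ of pairwise disjoint open subintervals of $[a,b]$ with $\sum_j(g(b_j)-g(a_j))<\delta$, set $E=\bigcup_j [a_j,b_j)$ so that $\mu_g(E)<\delta$, and use the representation on each subinterval to bound
$$
\sum_{j=1}^m |F(b_j)-F(a_j)| = \sum_{j=1}^m \left|\int_{[a_j,b_j)} F'_g\,d\mu_g\right| \leq \int_E |F'_g|\,d\mu_g < \varepsilon.
$$

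For the harder direction $(1) \implies (2)$, first establish that a $g$-absolutely continuous $F$ has $g$-bounded variation on $[a,b]$, using the defining inequality applied to a single partition. Decompose $F$ as a difference $F=\varphi_1-\varphi_2$ of two nondecreasing $g$-absolutely continuous functions via the $g$-variation. Each $\varphi_i$ induces a finite Borel measure $\mu_{\varphi_i}$ on $[a,b)$ through $\mu_{\varphi_i}([s,t))=\varphi_i(t)-\varphi_i(s)$, and the $g$-absolute continuity condition translates directly into $\mu_{\varphi_i} \ll \mu_g$. Apply the Radon-Nikodym theorem to obtain densities $h_i \in \mathcal{L}^1_g$ with $\varphi_i(t)-\varphi_i(a)=\int_{[a,t)} h_i\,d\mu_g$, then set $h=h_1-h_2$ to obtain (b) and (c). Assertion (a) follows if one identifies $h$ with $F'_g$ pointwise $g$-a.e.

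The key step and main obstacle is this identification $h=F'_g$ $\mu_g$-a.e. The set $C_g$ carries no $g$-measure and is discarded. For an atom $t \in D_g$, the identification is a direct computation: $\mu_g(\{t\})=g(t^+)-g(t)>0$ forces
$$
h(t)\,(g(t^+)-g(t)) = \mu_{\varphi_1}(\{t\})-\mu_{\varphi_2}(\{t\}) = F(t^+)-F(t),
$$
which gives $h(t)=F'_g(t)$ in the sense of the second branch of Definition~\ref{df:g derivative}. For $t \in [a,b)\setminus(D_g\cup C_g)$, one needs a Lebesgue-differentiation theorem adapted to the pseudometric $\rho(s,t)=|g(s)-g(t)|$: cover the exceptional set $\{t : h(t)\neq F'_g(t)\}$ by small $\rho$-intervals on which the averages $(F(s)-F(t))/(g(s)-g(t))$ deviate from $h(t)$, and use a Vitali-type covering argument together with $\mu_{\varphi_i}\ll\mu_g$ to conclude this set has $g$-measure zero. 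This careful bookkeeping between the atomic part supported on $D_g$ and the continuous part off $D_g\cup C_g$ is what makes the Stieltjes FTC subtler than its classical counterpart; once the identification is secured, (a), (b), and (c) follow immediately from the Radon-Nikodym representation.
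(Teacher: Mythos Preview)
The paper does not prove this theorem at all: it is stated without proof and attributed to \cite[Theorem~5.4]{PR} (L\'opez Pouso and Rodr\'{\i}guez). There is therefore no in-paper argument to compare your proposal against.

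Your sketch is a reasonable outline of the standard proof and is broadly in line with how the cited reference \cite{PR} proceeds: the implication $(2)\Rightarrow(1)$ via absolute continuity of the integral, and $(1)\Rightarrow(2)$ via bounded $g$-variation, a Jordan-type decomposition, Radon--Nikodym against $\mu_g$, and identification of the density with $F'_g$ through a Lebesgue differentiation argument adapted to the $g$-pseudometric. The one place where your outline is genuinely thin is the Vitali/Lebesgue-differentiation step off $D_g\cup C_g$: in the Stieltjes setting this is not a routine transplant of the classical argument, and in \cite{PR} it is handled by passing to the ``pseudo-inverse'' of $g$ so as to reduce to ordinary Lebesgue differentiation on the image side. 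As written, your covering argument is asserted rather than carried out, so if this were to stand as a self-contained proof you would need to either supply that reduction or prove a Besicovitch/Vitali lemma for $\rho$-balls directly.
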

We denote by $\mathcal{AC}_g([a,b],\mathbb{R})$ the set of  $g$-absolutely continuous functions. We set $\mathbf{u}=(u_1,\dots,u_n)\in\mathcal{AC}_g([a,b],\mathbb{R}^n)$ if  $u_i\in \mathcal{AC}_g([a,b],\mathbb{R})$ for $i=1,\dots,n$.

The following proposition provides conditions ensuring the $g$-absolute continuity of the composition of two functions, the proof is based on arguments as in~\cite[Proposition~5.3]{FP}.
\begin{lem}\label{lem:Lipschitz o g-absolute func}
  Let $\mathbf{u}:[a,b]\to B\subset\mathbb{R}^n$ be a $g$-absolutely continuous function, and let $v:B\to\mathbb{R}$ be a Lipschitz continuous function on $B$. Then, the composition~$v\circ\mathbf{u} \in \mathcal{AC}_{g}([a,b],\mathbb{R})$.
\end{lem}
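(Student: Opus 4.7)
The plan is to verify the defining $\varepsilon$-$\delta$ condition of $g$-absolute continuity for $v\circ\mathbf{u}$ directly, exploiting the Lipschitz estimate to transfer the condition from $\mathbf{u}$ to the composition. Let $L>0$ denote the Lipschitz constant of $v$ on $B$, so that $|v(\mathbf{x})-v(\mathbf{y})|\le L\|\mathbf{x}-\mathbf{y}\|$ for all $\mathbf{x},\mathbf{y}\in B$.

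First I would unpack the vector-valued $g$-absolute continuity: since $\mathcal{AC}_g([a,b],\mathbb{R}^n)=\prod_{j=1}^n\mathcal{AC}_g([a,b],\mathbb{R})$, each component $u_j$ is $g$-absolutely continuous. Fix $\varepsilon>0$. For each $j\in\{1,\dots,n\}$, apply Definition~\ref{df:g-abs cont} to $u_j$ with tolerance $\varepsilon/(nL)$ to obtain $\delta_j>0$, and set $\delta:=\min_{1\le j\le n}\delta_j$.

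Next, for any finite family $\{(a_i,b_i)\}_{i=1}^m$ of pairwise disjoint open subintervals of $[a,b]$ satisfying $\sum_{i=1}^m(g(b_i)-g(a_i))<\delta$, I would chain the Lipschitz bound with the max-norm estimate $\|\mathbf{u}(b_i)-\mathbf{u}(a_i)\|=\max_j|u_j(b_i)-u_j(a_i)|\le\sum_{j=1}^n|u_j(b_i)-u_j(a_i)|$, giving
$$
\sum_{i=1}^m|v(\mathbf{u}(b_i))-v(\mathbf{u}(a_i))|\le L\sum_{i=1}^m\|\mathbf{u}(b_i)-\mathbf{u}(a_i)\|\le L\sum_{j=1}^n\sum_{i=1}^m|u_j(b_i)-u_j(a_i)|<L\cdot n\cdot\frac{\varepsilon}{nL}=\varepsilon,
$$
where each inner sum is controlled because $\sum_{i=1}^m(g(b_i)-g(a_i))<\delta\le\delta_j$.

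There is essentially no obstacle here; the argument is a one-line Lipschitz reduction to the componentwise hypothesis, and the only mild bookkeeping is the passage from the max norm on $\mathbb{R}^n$ to the sum of coordinate increments so that each component's $g$-absolute continuity can be invoked separately. I would conclude by noting that $\delta$ depends only on $\varepsilon$, $L$, $n$, and the $\delta_j$'s, so the defining condition of Definition~\ref{df:g-abs cont} is met and $v\circ\mathbf{u}\in\mathcal{AC}_g([a,b],\mathbb{R})$.
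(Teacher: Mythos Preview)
Your argument is correct: the direct $\varepsilon$--$\delta$ verification via the Lipschitz bound and componentwise $g$-absolute continuity goes through exactly as you wrote, with only the trivial caveat that the case $L=0$ (constant $v$) should be treated separately or absorbed by taking any $L>0$ as an upper bound. The paper does not spell out a proof but simply points to~\cite[Proposition~5.3]{FP}, whose argument is precisely the one you have reproduced, so your proposal matches the intended approach.
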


In~\cite[Definition~6.1]{FP}, an exponential function was introduced.

\begin{defn}\label{df:g-exp}
Let $p \in \mathcal{L}^1_g([a,b),\mathbb{R})$ be such that
\begin{equation}\label{eq:exp:nonreasonance Condition}
1+p(t)\big(g(t^+)-g(t)\big) > 0\text{ for every } t \in [a,b)\cap D_g.
\end{equation}
Let us define the function $e_p(\cdot,a) : [a,b] \to (0,\infty)$ for every $t\in [a,b]$ by
$$
e_p(t,a) = e^{\int_{[a,t)}\tilde{p}(s)\,d\mu_g(s)},
$$
where
\begin{equation}\label{eq:exp:h tilde}
\tilde{p}(t) = \begin{dcases}
p(t) &\text{if $t \in [a,b]\setminus D_g$,}
\\
\frac{\log\big(1 + p(t)(g(t^+) - g(t))\big)}{g(t^+)-g(t)} &\text{if $t \in [a,b) \cap D_g$.}
\end{dcases}
\end{equation}
\end{defn}
In particular, given $p \in \mathcal{L}^1_g([a,b),\mathbb{R})$ a function satisfying Condition~\eqref{eq:exp:nonreasonance Condition}, then $\tilde{p}\in\mathcal{L}^1_g([a,b),\mathbb{R})$, and $e_{p}(\cdot, a)\in \mathcal{AC}_g([a,b),\mathbb{R})$, the reader is referred to~\cite[Lemmata~6.2 and~6.3]{FP} and improvements in~\cite[Theorem~3.2]{Mar}.

Now, we appeal to the generalization of the Gr\"{o}nwall Lemma to the Stieltjes derivative introduced by Larivière in~\cite[Proposition~4.1.4]{ThesisFL}, and further generalized in~\cite[Theorem~5.4]{GallMarSlav2025}. This lemma will play a crucial role in establishing global solutions defined on the positive
real half-line as we shall prove in the following section.
\begin{lem}\label{lem:g-Gronwall}
Let $u\in \mathcal{AC}_{g}([a,b],\mathbb{R})$. Assume that there exist functions $k,p\in \mathcal{L}^{1}_{g}([a,b),\mathbb{R})$, satisfying $1+p(t)\mu_g(\{t\})>0$ for all $t\in [a,b)\cap D_g$,  such that
  $$
  u_{g}'(t)\le k(t)+ p(t) u(t)\quad \text{for $g$-almost all }t\in[a,b).
  $$
  Then,
  $$
 u(t)\le e_p(t,a) \Big(\int_{[a,t)} \frac{e_{p}^{-1}(s,a) k(s)}{1+p(s)\mu_g(\{s\})}\, d\mu_{g}(s)+u(a)\Big), \quad t\in [a,b].
  $$
\end{lem}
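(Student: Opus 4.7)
The plan is to introduce the auxiliary function $v(t) := u(t)\,e_p^{-1}(t,a)$ on $[a,b]$, derive a pointwise bound on $v_g'$, and integrate using the Stieltjes Fundamental Theorem of Calculus (Theorem~\ref{th:Fund-Th}).

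First I would check that $v \in \mathcal{AC}_g([a,b],\mathbb{R})$. By \cite[Lemmata~6.2 and~6.3]{FP}, the function $e_p(\cdot,a)$ is strictly positive and $g$-absolutely continuous on $[a,b]$, hence bounded below by a positive constant there. Since the map $x \mapsto 1/x$ is then Lipschitz on the image of $e_p(\cdot,a)$, Lemma~\ref{lem:Lipschitz o g-absolute func} yields $e_p^{-1}(\cdot,a) \in \mathcal{AC}_g([a,b],\mathbb{R})$, and the product of two bounded $g$-absolutely continuous functions is $g$-absolutely continuous, so $v$ belongs to this class.

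Next I would compute $v_g'$ for $g$-almost every $t \in [a,b)$ by splitting cases according to $D_g$, using the identities $(e_p)_g'(t) = p(t)\,e_p(t,a)$ and $e_p(t^+,a) = e_p(t,a)\bigl(1+p(t)\mu_g(\{t\})\bigr)$ at $t \in D_g$, along with $u(t^+) = u(t) + u_g'(t)\mu_g(\{t\})$. For $t \in [a,b)\setminus(D_g\cup C_g)$, Proposition~\ref{prop:g derivative of composition} applied to $x \mapsto 1/x$ gives $(e_p^{-1})_g'(t) = -p(t)\,e_p^{-1}(t,a)$, and the standard product rule for limits yields
$$
v_g'(t) = e_p^{-1}(t,a)\bigl(u_g'(t) - p(t)\,u(t)\bigr);
$$
for $t \in D_g$, a direct evaluation of the quotient $(v(t^+)-v(t))/\mu_g(\{t\})$ using the two identities above produces
$$
v_g'(t) = \frac{u_g'(t) - p(t)\,u(t)}{e_p(t,a)\bigl(1+p(t)\mu_g(\{t\})\bigr)}.
$$
Since $\mu_g(\{t\}) = 0$ off $D_g$, the two formulas agree, and inserting the hypothesis $u_g'(t) - p(t)u(t) \le k(t)$ produces
$$
v_g'(t) \le \frac{e_p^{-1}(t,a)\,k(t)}{1+p(t)\mu_g(\{t\})} \quad \text{for $g$-almost all } t \in [a,b).
$$

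Finally, applying Theorem~\ref{th:Fund-Th} to $v$ and noting $v(a) = u(a)$ (because $e_p(a,a)=1$), integration of the preceding inequality gives
$$
v(t) \le u(a) + \int_{[a,t)} \frac{e_p^{-1}(s,a)\,k(s)}{1+p(s)\mu_g(\{s\})}\, d\mu_g(s),
$$
and multiplication by $e_p(t,a) > 0$ delivers the stated bound. The main obstacle will be the careful bookkeeping of the Stieltjes product/quotient rule at points of $D_g$ together with the verification that $v \in \mathcal{AC}_g([a,b],\mathbb{R})$; once those are in hand, the rest is a routine application of the Fundamental Theorem of Calculus.
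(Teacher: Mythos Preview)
The paper does not supply its own proof of Lemma~\ref{lem:g-Gronwall}: it merely cites the result from \cite[Proposition~4.1.4]{ThesisFL} and \cite[Theorem~5.4]{GallMarSlav2025}. There is therefore nothing in the paper to compare against.

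Your argument is the standard one and is correct. The one point worth tightening is the assertion that the product of two bounded $g$-absolutely continuous functions is again $g$-absolutely continuous; this is true but is not stated in the paper, so you should either cite it (it appears e.g.\ in \cite{PR} or can be obtained from Lemma~\ref{lem:Lipschitz o g-absolute func} applied to $(x,y)\mapsto xy$ on a bounded set) or give the two-line $\varepsilon$--$\delta$ verification. The computation of $v_g'$ at points of $D_g$ is exactly right, and the positivity of $e_p(t,a)\bigl(1+p(t)\mu_g(\{t\})\bigr)$ --- needed to preserve the inequality when you divide --- follows from the hypothesis $1+p(t)\mu_g(\{t\})>0$ together with $e_p>0$.
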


%In the sequel, $B_{\mathbb{R}^n}(\mathbf{0},r_0)$ denotes the open ball of $\mathbb{R}^n$ centered at $\mathbf{0}=(0,\dots,0) \in\mathbb{R}^n$ with radius $r_0>0$.

\section{Prolongation of solutions and maximal interval of existence}
Let $O$ be a nonempty open set of $\mathbb{R}^n$ with respect to the usual topology $\tau_{\mathbb{R}^n}$, and $I$ a $g$-open set of $\mathbb{R}$ containing $t_0\ge 0$ with $\sup I > t_0$. If we set $ \Omega :=I\times O$, then $\Omega$ is an open set with respect to the topology $\hat{\tau}_{g}$ on $\mathbb{R}\times\mathbb{R}^n$, generated by the open sets $U\times V$ such that $U\in \tau_g$ and $V\in \tau_{\mathbb{R}^n}$. Here and afterwards, $B_{\mathbb{R}^n}(\mathbf{x},\delta)$ denotes the open ball of $\mathbb{R}^n$ centered at $\mathbf{x} \in\mathbb{R}^n$ with radius $\delta>0$.

 Let us consider the Stieltjes dynamical system:
\begin{equation}\label{eq:dyn-system-Prolongation}
\begin{aligned}
     \mathbf{x}_{g}'(t) &= \mathbf{f}(t,\mathbf{x}(t))  \quad \text{for $g$-almost all $t\ge t_0$, $t\in I$},\\
     \mathbf{x}(t_0)&=\mathbf{x}_0 \in O,
\end{aligned}
\end{equation}
where $\mathbf{f}=(f_1,\dots,f_n):\Omega\cap\Big([t_0,\infty)\times \mathbb{R}^n\Big) \to \mathbb{R}^n$ and $\Omega$ defined above satisfying the following assumptions:
\begin{enumerate}
  \item[({\rm H$_{\Omega}$})] For every $(t,\mathbf{x}) \in \Omega$ with $t\ge t_0$,
  \begin{enumerate}
          \item [(a)]  one of the following conditions hold:
            \begin{enumerate}
          \item [(a1)] there exists $\delta>0$ such that $(t-\delta,t+\delta)\times B_{\mathbb{R}^n}(\mathbf{x},\delta) \subset \Omega$;
          \item [(a2)] if for every $\delta>0$, $(t-\delta,t+\delta)\times B_{\mathbb{R}^n}(\mathbf{x},\delta) \not\subset \Omega$, then $t\in D_g$ and there exists $\epsilon>0$ such that $(t-\epsilon,t]\times B_{\mathbb{R}^n}(\mathbf{x},\delta)\subset \Omega$;\end{enumerate}
          \item [(b)] $(t_0,\mathbf{x}_{\mathbf{f},t_0}^{+})\in \Omega$, and if $(t,\mathbf{x}) \in\Omega\cap (D_g\times \mathbb{R}^n)$ such that $(t,\mathbf{x}_{\mathbf{f},t}^{+})\in \Omega$, then $(t,\mathbf{x}_{\mathbf{f},t}^{+})$ satisfies Condition~{\rm(H$_{\Omega}$)(a)(1)}. Here and afterward, the notation $\mathbf{x}_{\mathbf{f},t}^{+}$ refers to
          $$
          \mathbf{x}_{\mathbf{f},t}^{+}:=\mathbf{x}+\mu_g(\{t\})\mathbf{f}(t,\mathbf{x});
          $$
        \end{enumerate}
  \item[({\rm H$_{\mathbf{f}}$})]
\begin{enumerate}[(i)]
\item   for all $\mathbf{x}\in O$, $\mathbf{f}(\cdot,\mathbf{x})$ is $g$-measurable;
\item  $\mathbf{f}(\cdot,\mathbf{x}_0)\in \mathcal{L}^{1}_{g,loc}([t_0,\infty),\mathbb{R}^n)$;
\item  $\mathbf{f}$ is {\it $g$-integrally locally Lipschitz continuous}, i.e.  for every $r>0$, there exists a function $L_r\in \mathcal{L}^{1}_{g,loc}([t_0,\infty),[0,\infty))$ such that
$$
\|\mathbf{f}(t,\mathbf{x})-\mathbf{f}(t,\mathbf{y})\|\le L_r(t) \|\mathbf{x}-\mathbf{y}\|,
$$
for $g$-almost all $t\in I \cap [t_0,\infty)$ and all $\mathbf{x},\mathbf{y}\in \overline{B_{\mathbb{R}^n}(\mathbf{x}_0,r)}\cap O$.
\end{enumerate}
\end{enumerate}
We recall the local existence result~\cite[Theorem~7.4]{FP}.
\begin{thm}\label{thm:loc-existence}
Assume that the conditions in~{\rm(H$_{\mathbf{f}}$)} hold. Then there exists $\tau>0$ such that the system~\eqref{eq:dyn-system-Prolongation} has a unique solution $\mathbf{x}\in \mathcal{AC}_{g}([t_0,t_0+\tau],\mathbb{R}^n)$.
\end{thm}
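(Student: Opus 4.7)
The plan is to reformulate \eqref{eq:dyn-system-Prolongation} as a fixed-point equation in a Banach space of $g$-continuous functions and invoke Banach's contraction principle. By Theorem~\ref{th:Fund-Th}, a function $\mathbf{x}\in\mathcal{AC}_g([t_0,t_0+\tau],\mathbb{R}^n)$ solves \eqref{eq:dyn-system-Prolongation} if and only if
$$
\mathbf{x}(t) = \mathbf{x}_0 + \int_{[t_0,t)} \mathbf{f}(s,\mathbf{x}(s))\,d\mu_g(s), \qquad t\in[t_0,t_0+\tau],
$$
so I would look for a fixed point of the associated Volterra operator $T$ defined by the right-hand side.

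To set up the domain, since $O$ is open I fix $r>0$ with $\overline{B_{\mathbb{R}^n}(\mathbf{x}_0,r)}\subset O$, and consider the closed subset
$$
K_\tau := \bigl\{\mathbf{x}\in\mathcal{BC}_g([t_0,t_0+\tau],\mathbb{R}^n): \mathbf{x}(t_0)=\mathbf{x}_0,\ \|\mathbf{x}(t)-\mathbf{x}_0\|\le r\bigr\}
$$
of $\mathcal{BC}_g$. For $\mathbf{x}\in K_\tau$, standard Carathéodory arguments combined with~(H$_{\mathbf{f}}$)(i) and the $g$-continuity of $\mathbf{x}$ show that $s\mapsto \mathbf{f}(s,\mathbf{x}(s))$ is $g$-measurable, and by~(H$_{\mathbf{f}}$)(ii)--(iii) it is dominated by the $\mathcal{L}^1_{g,loc}$ function $h(s):=\|\mathbf{f}(s,\mathbf{x}_0)\|+L_r(s)\,r$; hence $T\mathbf{x}$ is well defined and $g$-absolutely continuous, in particular $g$-continuous.

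Next I would shrink $\tau>0$ using the $g$-absolute continuity of the Lebesgue-Stieltjes integral so that simultaneously
$$
\int_{[t_0,t_0+\tau)} h(s)\,d\mu_g(s)\le r, \qquad \int_{[t_0,t_0+\tau)} L_r(s)\,d\mu_g(s)<1.
$$
The first inequality gives $T(K_\tau)\subset K_\tau$; combining~(H$_{\mathbf{f}}$)(iii) with the second yields, for any $\mathbf{x},\mathbf{y}\in K_\tau$,
$$
\|T\mathbf{x}-T\mathbf{y}\|_\infty \le \|\mathbf{x}-\mathbf{y}\|_\infty \int_{[t_0,t_0+\tau)} L_r(s)\,d\mu_g(s),
$$
so $T$ is a contraction. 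The Banach fixed-point theorem produces a unique $\mathbf{x}\in K_\tau$ with $T\mathbf{x}=\mathbf{x}$, and the Fundamental Theorem of Calculus upgrades it to $\mathcal{AC}_g$ and to a solution of \eqref{eq:dyn-system-Prolongation}.

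The main obstacle is the case $t_0\in D_g$: then $\mu_g(\{t_0\})>0$, so $\int_{[t_0,t_0+\tau)} h\,d\mu_g\ge \mu_g(\{t_0\})\|\mathbf{f}(t_0,\mathbf{x}_0)\|$ does not shrink with $\tau$, and the mandatory jump forces $\mathbf{x}(t_0^+)=\mathbf{x}_{\mathbf{f},t_0}^+$, which a priori need not lie in $O$. I would handle this by invoking Condition~(H$_\Omega$)(b) to guarantee $\mathbf{x}_{\mathbf{f},t_0}^+\in O$, choosing $r$ large enough to contain both $\mathbf{x}_0$ and $\mathbf{x}_{\mathbf{f},t_0}^+$ (and a neighborhood of the post-jump point), and then carrying out the contraction argument above on $(t_0,t_0+\tau]$ starting from $\mathbf{x}_{\mathbf{f},t_0}^+$; the $g$-continuous extension to $[t_0,t_0+\tau]$ is then automatic from Proposition~\ref{prop:g-conti funct constant where g constant} and the definition of $\mathbf{x}_{\mathbf{f},t_0}^+$.
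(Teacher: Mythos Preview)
The paper does not give its own proof of this statement: Theorem~\ref{thm:loc-existence} is simply recalled from \cite[Theorem~7.4]{FP}, so there is nothing to compare against directly. Your approach---rewriting \eqref{eq:dyn-system-Prolongation} as a Volterra integral equation via Theorem~\ref{th:Fund-Th} and applying Banach's contraction principle on a closed ball in $\mathcal{BC}_g$---is precisely the strategy used in \cite{FP}, and your sketch is sound.

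One point worth flagging: the theorem as stated here assumes only {\rm(H$_{\mathbf{f}}$)}, yet to handle the case $t_0\in D_g$ you invoke {\rm(H$_\Omega$)(b)}. This is not a flaw in your reasoning but rather reflects an ambiguity in the paper's presentation; indeed, the paragraph immediately following Theorem~\ref{thm:loc-existence} explicitly says that {\rm(H$_\Omega$)} is what ``guarantees existence of a local solution on an interval $[t_0,t_0+\tau]$ even in the case where $t_0\in D_g$.'' So the authors themselves regard {\rm(H$_\Omega$)} as implicitly in force when $t_0\in D_g$, and your treatment is consistent with that. Your proposed workaround---perform the contraction on $(t_0,t_0+\tau]$ from the post-jump value $\mathbf{x}_{\mathbf{f},t_0}^{+}$ and then glue in the value $\mathbf{x}_0$ at $t_0$---is exactly the right maneuver, and it simultaneously resolves the issue that $\int_{[t_0,t_0+\tau)}L_r\,d\mu_g$ need not be small when $\mu_g(\{t_0\})>0$.
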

In the sequel, as shown in~\cite[Section~4.2]{ThesisFL}, the solution given by Theorem~\ref{thm:loc-existence} can be extended to intervals larger than $[t_0,t_0+\tau]$ up to a maximal interval as long as the graph of the solution does not leave $\Omega$ and the right-hand side limit of the solution at $t_0+\tau$ belongs to $O$. It should be noted that the derivator $g$ may not necessarily be continuous at $t_0$. For this purpose, Condition~{\rm(H$_{\Omega}$)} is necessary to seek the maximal interval of existence of the solution. Indeed, notice that Condition~{\rm(H$_{\Omega}$)} guarantees existence of a local solution on an interval $[t_0,t_0+\tau]$ even in the case where $t_0\in D_g$, and insures extension of the solution $\mathbf{x}$ on some larger interval $[t_0,t_0+\epsilon]$, $\epsilon>\tau$ if $t_0+\tau \in D_g$  and $\mathbf{x}(t_0+\tau^+) \in O$.

Let us define the set
$$
\mathcal{S}(t_0,\mathbf{x}_0):=\{\mathbf{x}:I_\mathbf{x} = J_\mathbf{x}\cap[t_0,\infty)\to\mathbb{R}^n: \text{$\mathbf{x}$ is a solution of the system~\eqref{eq:dyn-system-Prolongation}}\},
$$
where $J_\mathbf{x}$ is a $g$-open interval containing $t_0$ with $\sup J_\mathbf{x} > t_0$. In the sequel, we adopt the notation $t_{sup}:=\sup I_\mathbf{x}$.

\begin{defn}\label{df:x extendible to the right}
Let $\mathbf{x},\mathbf{y}\in \mathcal{S}(t_0,\mathbf{x}_0)$.
\begin{enumerate}[(1)]
  \item We say that $\mathbf{x}$ is {\it smaller} than $\mathbf{y}$ (and we denote $\mathbf{x}\prec \mathbf{y}$), if and only if
\begin{enumerate}[(i)]
  \item $I_\mathbf{x} \subset I_\mathbf{y}$;
  \item $\sup I_\mathbf{y}> \sup I_\mathbf{x}$;
  \item $\mathbf{y}|_{I_\mathbf{x}}=\mathbf{x}$.
\end{enumerate} In this case, we say that $\mathbf{x}$ is {\it extendible to the right} and $\mathbf{y}$ is {\it a prolongation to the right} of~$\mathbf{x}$.
   \item We write that $\mathbf{x}\preceq \mathbf{y} \iff \mathbf{x}\prec \mathbf{y} \text{ or }\mathbf{x}=\mathbf{y}$.
   \end{enumerate}
\end{defn}
\begin{rem}\label{rem:adding the point x(t_sup^-) does not properly prolongate x}
 It is worth mentioning that given a solution $\mathbf{x}:[t_0,t_1)\to \mathbb{R}^n$ such that $\mathbf{x}(t_1^-):=\lim\limits_{t\to t_1^-}\mathbf{x}(t) \in O$ and $t_1\in D_g$, to not increase the notation, we will replace $\mathbf{x}$ by the function
 $\mathbf{x}:[t_0,t_1]\to \mathbb{R}^n$ defined by
$$
\mathbf{x}=\begin{dcases}
                          \mathbf{x}(t)& \mbox{if } t\in [t_0,t_1), \\
                         \mathbf{x}(t_1^-), & \mbox{if } t=t_1.
                       \end{dcases}
$$
\end{rem}

The next result asserts that two prolongations to the right of a solution are equal on the common interval of existence, the reader is referred to~\cite[Theorem~4.2.3]{ThesisFL} for the proof.
\begin{thm}\label{thm:prolongations coincide on common interval}
Assume that~{\rm(H$_{\mathbf{f}}$)} holds. Let $\mathbf{x},\mathbf{y},\mathbf{z}\in \mathcal{S}(t_0,\mathbf{x}_0)$ be such that $\mathbf{y}$ and~$\mathbf{z}$ are two prolongations of $\mathbf{x}$ then $\mathbf{y}=\mathbf{z}$ on $I_\mathbf{y}\cap I_\mathbf{z}$.
\end{thm}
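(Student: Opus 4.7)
The plan is a maximal agreement argument built on the local uniqueness that is embedded in Theorem~\ref{thm:loc-existence}. Define
$$
A := \bigl\{ t \in I_{\mathbf{y}} \cap I_{\mathbf{z}} : \mathbf{y}(s) = \mathbf{z}(s) \text{ for every } s \in [t_0,t] \bigr\}.
$$
Because $I_{\mathbf{x}} \subset I_{\mathbf{y}} \cap I_{\mathbf{z}}$ and both $\mathbf{y}|_{I_{\mathbf{x}}}$ and $\mathbf{z}|_{I_{\mathbf{x}}}$ equal $\mathbf{x}$, the set $A$ contains $I_{\mathbf{x}}$; in particular $A\neq\emptyset$. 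Set $T := \sup A$. The statement reduces to showing $T = \sup(I_{\mathbf{y}} \cap I_{\mathbf{z}})$.

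Suppose, for contradiction, that $T$ lies strictly inside $I_{\mathbf{y}} \cap I_{\mathbf{z}}$. I would first check that $T \in A$. Since $\mathbf{y}$ and $\mathbf{z}$ are $g$-absolutely continuous, they are $g$-continuous, and Proposition~\ref{prop:g-conti funct constant where g constant}(1) makes them left-continuous at $T$. As $\mathbf{y}(s) = \mathbf{z}(s)$ for every $s < T$ with $s \in A$, passing to the left limit gives $\mathbf{y}(T) = \mathbf{z}(T)$; hence $T \in A$.

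Next I would apply Theorem~\ref{thm:loc-existence} to the initial value problem
\begin{equation*}
\mathbf{u}_g'(t) = \mathbf{f}(t,\mathbf{u}(t)) \text{ for $g$-a.e.\ } t \geq T,\qquad \mathbf{u}(T) = \mathbf{y}(T),
\end{equation*}
to obtain $\tau > 0$ and a unique solution defined on $[T,T+\tau]$. Shrinking $\tau$ if needed so that $T+\tau \in I_{\mathbf{y}} \cap I_{\mathbf{z}}$, both $\mathbf{y}|_{[T,T+\tau]}$ and $\mathbf{z}|_{[T,T+\tau]}$ solve this IVP, and therefore coincide on $[T,T+\tau]$. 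Combined with $T\in A$, this shows $T + \tau \in A$, contradicting $T = \sup A$.

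The hard part will be transporting the assumptions (H$_\Omega$) and (H$_\mathbf{f}$) — originally formulated at the base point $(t_0,\mathbf{x}_0)$ — to the new base point $(T,\mathbf{y}(T))$ so that Theorem~\ref{thm:loc-existence} genuinely applies at $T$. Specifically: $(T,\mathbf{y}(T))\in\Omega$ because $T\in I_{\mathbf{y}}$ and $\mathbf{y}(T)\in O$; the topological clause (H$_\Omega$)(a) furnishes the product neighborhood around $(T,\mathbf{y}(T))$ that the local construction needs; and the Lipschitz hypothesis (H$_\mathbf{f}$)(iii) is transferred by picking $r > 0$ so that $\overline{B_{\mathbb{R}^n}(\mathbf{y}(T),r)}$ lies inside a ball for which the original $L_{r'}$ controls $\mathbf{f}$. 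When $T\in D_g$, one additionally invokes (H$_\Omega$)(b) to guarantee that the jump point $\mathbf{x}_{\mathbf{f},T}^{+}=\mathbf{y}(T)+\mu_g(\{T\})\mathbf{f}(T,\mathbf{y}(T))$ stays in $\Omega$; note this jump depends only on $T$ and on $\mathbf{y}(T)=\mathbf{z}(T)$, so it is identical for $\mathbf{y}$ and $\mathbf{z}$, which is exactly what ensures that the extension argument crosses the discontinuity consistently for both solutions.
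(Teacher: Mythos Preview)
The paper does not supply its own proof of this theorem; it simply refers the reader to \cite[Theorem~4.2.3]{ThesisFL}. Your maximal-agreement argument is the standard route for such results and is correct: the set $A$ is nonempty, left-continuity (via Proposition~\ref{prop:g-conti funct constant where g constant}(1)) forces $T\in A$, and local uniqueness from Theorem~\ref{thm:loc-existence} at $(T,\mathbf{y}(T))$ pushes agreement strictly past $T$. Your discussion of transporting (H$_\mathbf{f}$) to the new base point is on target; in particular (H$_\mathbf{f}$)(ii) at $\mathbf{y}(T)$ follows from (H$_\mathbf{f}$)(ii) at $\mathbf{x}_0$ combined with the Lipschitz bound (H$_\mathbf{f}$)(iii), and when $T\in D_g$ the fact that $\mathbf{y}$ and $\mathbf{z}$ are already defined past $T$ guarantees that the common jump value $\mathbf{y}(T)^+_{\mathbf{f},T}=\mathbf{z}(T)^+_{\mathbf{f},T}$ lies in $O$, so (H$_\Omega$)(b) applies. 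This is almost certainly the argument in the cited thesis as well.
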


In~\cite[Theorem~4.2.4]{ThesisFL}, extendible solutions to the right were characterized as follows.

\begin{thm}\label{thm:extendible solution to the right}
 Assume that~{\rm(H$_{\mathbf{f}}$)} holds.   Let $\mathbf{x}\in \mathcal{S}(t_0,\mathbf{x}_0)$. The following assumptions are equivalent:
  \begin{enumerate}[$(1)$]
    \item $\mathbf{x}$ is extendible to the right;
    \item
    \begin{enumerate}[{\rm(i)}]
           \item $\text{Graph}(\mathbf{x}):=\{(t,\mathbf{x}(t)): t\in I_\mathbf{x}\}$ is bounded;
           \item $A\cup A^+ \subset \Omega$ where
           $$
           A=\{(t_{sup},\mathbf{s})\in [t_0,\infty)\times \mathbb{R}^n: \exists\{t_n\}_n\subset I_\mathbf{x},\, t_n \nearrow t_{sup} \text{ and } \mathbf{s}=\lim\limits_{n\to\infty}\mathbf{x}(t_n)\},
           $$
           $$
           A^+=\{(t_{sup},\mathbf{s}_{\mathbf{f},t_{sup}}^{+}):(t_{sup},\mathbf{s})\in A\}.
           $$
         \end{enumerate}
  \end{enumerate}
\end{thm}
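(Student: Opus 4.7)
My plan is to establish the equivalence in two steps, with the converse carrying most of the work; the forward direction is essentially a consequence of left-continuity of the $g$-absolutely continuous prolongation. Assume (1) holds and let $\mathbf{y}$ be a prolongation of $\mathbf{x}$, so $t_{sup} \in I_{\mathbf{y}}$. Since $\mathbf{y}$ is $g$-absolutely continuous, hence $g$-continuous, Proposition~\ref{prop:g-conti funct constant where g constant} gives $\lim_{t \to t_{sup}^-} \mathbf{x}(t) = \mathbf{y}(t_{sup})$, so $A = \{(t_{sup}, \mathbf{y}(t_{sup}))\} \subset \mathrm{Graph}(\mathbf{y}) \subset \Omega$ and $\mathrm{Graph}(\mathbf{x})$ is bounded on the compact interval $[t_0, t_{sup}]$. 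When $t_{sup} \in D_g$, Definition~\ref{df:g derivative} applied to $\mathbf{y}$ at $t_{sup}$ forces $\mathbf{y}(t_{sup}^+) = \mathbf{y}(t_{sup}) + \mu_g(\{t_{sup}\})\mathbf{f}(t_{sup}, \mathbf{y}(t_{sup})) = \mathbf{y}(t_{sup})^+_{\mathbf{f}, t_{sup}}$, which is the unique element of $A^+$ and lies in $\Omega$ because $\mathbf{y}$ is defined strictly beyond $t_{sup}$.

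Conversely, assume (2). The principal step is to show that $\mathbf{x}(t_{sup}^-)$ exists. Boundedness of $\mathrm{Graph}(\mathbf{x})$ furnishes $r > 0$ with $\|\mathbf{x}(t) - \mathbf{x}_0\| \le r$ for every $t \in I_{\mathbf{x}}$, and forces $t_{sup} < \infty$. Theorem~\ref{th:Fund-Th} gives the integral identity
\[
\mathbf{x}(t) - \mathbf{x}(s) = \int_{[s,t)} \mathbf{f}(\tau, \mathbf{x}(\tau))\, d\mu_g(\tau)
\]
for every $s \le t$ in $I_{\mathbf{x}}$, while the triangle inequality combined with (H$_{\mathbf{f}}$)(ii)--(iii) yields the domination
\[
\|\mathbf{f}(\tau, \mathbf{x}(\tau))\| \le \|\mathbf{f}(\tau, \mathbf{x}_0)\| + r\, L_r(\tau),
\]
whose right-hand side belongs to $\mathcal{L}^1_{g,loc}([t_0, \infty), \mathbb{R})$ and is therefore $\mu_g$-integrable on the finite interval $[t_0, t_{sup})$. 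The Cauchy criterion for the Lebesgue-Stieltjes integral now yields existence of $\mathbf{x}(t_{sup}^-)$, so $A = \{(t_{sup}, \mathbf{x}(t_{sup}^-))\}$ is a singleton, and assumption (2)(ii) places it in $\Omega = I \times O$, giving $\mathbf{x}(t_{sup}^-) \in O$ and $t_{sup} \in I$.

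To conclude, I extend $\mathbf{x}$ to $[t_0, t_{sup}]$ as in Remark~\ref{rem:adding the point x(t_sup^-) does not properly prolongate x}. If $t_{sup} \notin D_g$, assumption (H$_{\Omega}$)(a)(1) is available at $(t_{sup}, \mathbf{x}(t_{sup}))$, so Theorem~\ref{thm:loc-existence} produces a solution on $[t_{sup}, t_{sup}+\tau']$ for some $\tau' > 0$, whose concatenation with $\mathbf{x}$ is the desired prolongation $\mathbf{y}$; uniqueness of this concatenation is guaranteed by Theorem~\ref{thm:prolongations coincide on common interval}. If $t_{sup} \in D_g$, the inclusion $A^+ \subset \Omega$ ensures $(t_{sup}, \mathbf{x}_{\mathbf{f}, t_{sup}}^+) \in \Omega$, and (H$_{\Omega}$)(b) then guarantees that this point satisfies (H$_{\Omega}$)(a)(1); applying Theorem~\ref{thm:loc-existence} at this jumped initial datum again furnishes a strict extension of $\mathbf{x}$. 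The main obstacle is the Cauchy argument establishing $\mathbf{x}(t_{sup}^-)$, which must jointly exploit the graph bound, the local integrability hypothesis on $\mathbf{f}(\cdot, \mathbf{x}_0)$, and the integrable Lipschitz constant $L_r$ to dominate the Stieltjes integrand uniformly over the left-neighbourhood of $t_{sup}$; a subordinate subtlety is the discontinuity case $t_{sup} \in D_g$, where the role of $A^+ \subset \Omega$ together with (H$_{\Omega}$)(b) is precisely to restore the product-neighbourhood structure required by the local existence theorem.
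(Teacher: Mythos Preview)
The paper does not supply its own proof of this theorem; the result is quoted verbatim from \cite[Theorem~4.2.4]{ThesisFL}, so there is no in-paper argument to compare your proposal against and it must be judged on its own.

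Your argument is sound in its essentials and follows the natural route. For $(2)\Rightarrow(1)$ the key step --- dominating $\|\mathbf{f}(\cdot,\mathbf{x}(\cdot))\|$ by $\|\mathbf{f}(\cdot,\mathbf{x}_0)\|+rL_r(\cdot)\in\mathcal{L}^1_{g}([t_0,t_{sup}))$ via (H$_{\mathbf{f}}$)(ii)--(iii) and then invoking the Cauchy criterion for the Stieltjes integral to force the existence of $\mathbf{x}(t_{sup}^-)$ --- is exactly right, and the subsequent appeal to local existence at $(t_{sup},\mathbf{x}(t_{sup}^-))$ is the correct finishing move. Two points deserve tightening. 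First, the theorem as stated in the paper assumes only (H$_{\mathbf{f}}$), whereas your extension step explicitly invokes (H$_\Omega$)(a),(b); this is harmless in context since (H$_\Omega$) is a standing structural hypothesis on $\Omega$ throughout Section~3, but you should flag that you are using it. Second, in the case $t_{sup}\in D_g$ of $(2)\Rightarrow(1)$, local existence should be invoked at the datum $(t_{sup},\mathbf{x}(t_{sup}))$ rather than at the jumped datum $\mathbf{x}^+_{\mathbf{f},t_{sup}}$: the jump is then produced automatically by the Stieltjes equation at $t_{sup}$, and (H$_\Omega$)(b) is precisely what makes Theorem~\ref{thm:loc-existence} applicable there. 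Finally, in $(1)\Rightarrow(2)$, the assertion that $\mathbf{y}(t_{sup}^+)\in O$ ``because $\mathbf{y}$ is defined strictly beyond $t_{sup}$'' is slightly loose, since openness of $O$ does not by itself prevent a one-sided limit of an $O$-valued map from landing on $\partial O$; a fully rigorous treatment would appeal to the precise definition of solution used in \cite{ThesisFL}.
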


\begin{defn}
Let $\mathbf{x}\in \mathcal{S}(t_0,\mathbf{x}_0)$. We say that $\mathbf{x}$ is a {\it  maximal  solution} of~\eqref{eq:dyn-system-Prolongation} defined on an interval $I_{\mathbf{x}}$ if,  for every $\mathbf{y}\in \mathcal{S}(t_0,\mathbf{x}_0)$ satisfying $\mathbf{x}\preceq \mathbf{y}$, we have $\mathbf{x}=\mathbf{y}$. $I_{\mathbf{x}}$ is referred to as the {\it maximal interval of existence}.
\end{defn}

As shown in~\cite[Theorem~4.2.5]{ThesisFL}, the existence of the maximal solution holds as a consequence of using Zorn's lemma~\cite{Zorn}. For this purpose, let us consider $\mathbf{x}\in \mathcal{S}(t_0,\mathbf{x}_0)$, and define the set $\mathcal{X}$:
$$
\mathcal{X}=\{\mathbf{y}\in  \mathcal{S}(t_0,\mathbf{x}_0): \mathbf{x}\preceq\mathbf{y}\}.
$$
$\mathcal{X}$ is a nonempty partially ordered set since $\mathbf{x}\in\mathcal{X}$. Now, following the same argument as in the proof of~\cite[Theorem~4.2.5]{ThesisFL}, by using the partial order $\preceq$ defined in Definition~\ref{df:x extendible to the right} instead, we deduce that every chain of $\mathcal{X}$ has the largest element. This yields the existence of a maximal solution defined on $\bigcup_{\mathbf{y}\in \mathcal{X}}I_{\mathbf{y}}$, constructed by taking the union of all the prolongations to the right of $\mathbf{x}$. In addition, Theorem~\ref{thm:prolongations coincide on common interval} guarantees the uniqueness of the maximal solution, and we obtain the following theorem.

\begin{thm}\label{thm:existence of maximal solution}
Assume that~{\rm(H$_{\Omega}$)} and~{\rm(H$_{\mathbf{f}}$)} hold. There exists a unique maximal solution $\mathbf{x} \in\mathcal{S}(t_0,\mathbf{x}_0)$ such that $\omega(t_0,\mathbf{x}_0):=\sup I_{\mathbf{x}}\le \infty$.
\end{thm}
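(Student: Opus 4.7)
The plan is to follow the argument sketched in the paragraph preceding the statement and make it rigorous by a direct Zorn's lemma application. First I would invoke Theorem~\ref{thm:loc-existence} to obtain a local solution $\mathbf{x}_0^*\in\mathcal{S}(t_0,\mathbf{x}_0)$ defined on $[t_0,t_0+\tau]$; this guarantees that $\mathcal{S}(t_0,\mathbf{x}_0)$ is nonempty and hence the partially ordered set
$$
\mathcal{X}=\{\mathbf{y}\in\mathcal{S}(t_0,\mathbf{x}_0):\mathbf{x}_0^*\preceq\mathbf{y}\}
$$
is nonempty under $\preceq$. The heart of the argument is to verify that every chain $\mathcal{C}=\{\mathbf{y}_\alpha\}_{\alpha\in A}\subset\mathcal{X}$ admits an upper bound in $\mathcal{X}$.

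Given such a chain, I would set $J=\bigcup_{\alpha\in A} I_{\mathbf{y}_\alpha}$ and define $\mathbf{z}:J\to\mathbb{R}^n$ by $\mathbf{z}(t)=\mathbf{y}_\alpha(t)$ whenever $t\in I_{\mathbf{y}_\alpha}$. Since the chain is totally ordered by $\preceq$, given two indices $\alpha,\beta$ one of the functions is a prolongation of the other, so by Theorem~\ref{thm:prolongations coincide on common interval} the values agree on $I_{\mathbf{y}_\alpha}\cap I_{\mathbf{y}_\beta}$ and $\mathbf{z}$ is well defined. The set $J$ is an interval starting at $t_0$ since each $I_{\mathbf{y}_\alpha}$ is. The function $\mathbf{z}$ is $g$-absolutely continuous on every $[t_0,b]\subset J$ because on such an interval it coincides with some $\mathbf{y}_\alpha$, and $\mathbf{z}_g'(t)=\mathbf{f}(t,\mathbf{z}(t))$ holds for $g$-almost every $t\in J$ by the same reduction. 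Hence $\mathbf{z}\in\mathcal{S}(t_0,\mathbf{x}_0)$ and, by construction, $\mathbf{y}_\alpha\preceq\mathbf{z}$ for every $\alpha\in A$, so $\mathbf{z}\in\mathcal{X}$ is the required upper bound.

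By Zorn's lemma~\cite{Zorn}, $\mathcal{X}$ contains a maximal element $\mathbf{x}$, which is a maximal solution of~\eqref{eq:dyn-system-Prolongation}: any $\mathbf{y}\in\mathcal{S}(t_0,\mathbf{x}_0)$ with $\mathbf{x}\preceq\mathbf{y}$ belongs to $\mathcal{X}$ and hence equals $\mathbf{x}$ by maximality. For uniqueness, suppose $\mathbf{x}^{(1)}$ and $\mathbf{x}^{(2)}$ were two maximal solutions. Both restrict to solutions on $I_{\mathbf{x}^{(1)}}\cap I_{\mathbf{x}^{(2)}}$ starting from $(t_0,\mathbf{x}_0)$; applying Theorem~\ref{thm:prolongations coincide on common interval} (up to taking a common initial restriction and viewing both as prolongations of it) forces them to coincide on this intersection. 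The glued function on $I_{\mathbf{x}^{(1)}}\cup I_{\mathbf{x}^{(2)}}$ is again an element of $\mathcal{S}(t_0,\mathbf{x}_0)$ and is a prolongation of each $\mathbf{x}^{(i)}$; maximality then yields $I_{\mathbf{x}^{(1)}}=I_{\mathbf{x}^{(2)}}$ and $\mathbf{x}^{(1)}=\mathbf{x}^{(2)}$. Finally, the definition $\omega(t_0,\mathbf{x}_0):=\sup I_\mathbf{x}\le\infty$ is then tautological.

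The step I expect to require the most care is the verification that the pointwise-defined $\mathbf{z}$ on the union $J$ is an admissible solution in the sense of $\mathcal{S}(t_0,\mathbf{x}_0)$; specifically, I must check that $J$ is of the required form (a $g$-open interval meeting $[t_0,\infty)$) and that the restriction of $\mathbf{z}$ to every compact subinterval $[t_0,b]\subset J$ is in $\mathcal{AC}_g$ and satisfies the Stieltjes equation $g$-a.e.\ there. Since any such $[t_0,b]$ already lies in some $I_{\mathbf{y}_\alpha}$ (because the $I_{\mathbf{y}_\alpha}$ form a totally ordered, upward-directed family covering $J$ with each initial segment eventually absorbed), this reduces to the already-established properties of $\mathbf{y}_\alpha$, and the remaining bookkeeping is routine.
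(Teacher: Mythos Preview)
Your proposal is correct and follows essentially the same approach as the paper: the paper's proof is precisely the Zorn's lemma argument sketched in the paragraph preceding the theorem, using the set $\mathcal{X}$ of prolongations of a fixed starting solution, building the upper bound of a chain as the function on the union of domains, and deducing uniqueness from Theorem~\ref{thm:prolongations coincide on common interval}. Your write-up simply fills in the details of that sketch (and your phrasing ``upper bound'' is in fact the standard hypothesis for Zorn's lemma, whereas the paper's ``largest element'' is slightly imprecise).
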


The next theorem highlights three alternative cases that occur, the reader is referred to~\cite[Theorem~4.2.6]{ThesisFL} for the proof.
\begin{thm}\label{thm:3-alternatives of max-interval-existence}
Assume that~{\rm(H$_{\Omega}$)} and~{\rm(H$_{\mathbf{f}}$)} hold. Let  $\mathbf{x}\in\mathcal{S}(t_0,\mathbf{x}_0)$ be the maximal solution of~\eqref{eq:dyn-system-Prolongation}, then one of the alternatives holds:
  \begin{enumerate}
 \item[{{\rm(A1)}}] $\omega(t_0,\mathbf{x}_0)=\infty$;
 \item[{{\rm(A2)}}] $\omega(t_0,\mathbf{x}_0)<\infty$, and for every $\{t_n\}_n \subset I_{\mathbf{x}}$ such that $t_n \nearrow \omega(t_0,\mathbf{x}_0)$,  $\{\mathbf{x}(t_n)\}_n$ is not bounded;
 \item[{{\rm(A3)}}] $\omega(t_0,\mathbf{x}_0)<\infty$, moreover, there exists $\{t_n\}_n \subset I_{\mathbf{x}}$ satisfying $t_n \nearrow \omega(t_0,\mathbf{x}_0)$ and $\{\mathbf{x}(t_n)\}_n$ is a bounded sequence, such that for every  subsequence $\{t_{n_k}\}_k$ verifying $\mathbf{x}(t_{n_k}) \to \mathbf{m}$, we have that
     $$
      \{(\omega(t_0,\mathbf{x}_0), \mathbf{m}),(\omega(t_0,\mathbf{x}_0),\mathbf{m}_{\mathbf{f},\omega(t_0,\mathbf{x}_0)}^{+})\} \not\subset \Omega.
     $$
  \end{enumerate}
\end{thm}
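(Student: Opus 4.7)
The plan is to invoke Theorem~\ref{thm:extendible solution to the right} applied to the maximal solution $\mathbf{x}$: since $\mathbf{x}$ cannot be extended to the right, either $\mathrm{Graph}(\mathbf{x})$ is unbounded or $A \cup A^{+} \not\subset \Omega$. The argument proceeds by a three-way case distinction according to the value of $\omega(t_0,\mathbf{x}_0)$ and the behavior of $\mathbf{x}$ along sequences approaching it.

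If $\omega(t_0,\mathbf{x}_0)=+\infty$, alternative (A1) holds at once. Assuming $\omega(t_0,\mathbf{x}_0)<+\infty$, I would next ask whether there exists a sequence $\{t_n\}_n \subset I_\mathbf{x}$ with $t_n \nearrow \omega(t_0,\mathbf{x}_0)$ and $\{\mathbf{x}(t_n)\}_n$ bounded. If not, every such sequence has unbounded image, which is precisely (A2).

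The remaining, main case is when such a bounded sequence exists; the goal is to establish (A3). I would first apply Bolzano--Weierstrass to extract a convergent subsequence $\mathbf{x}(t_{n_k}) \to \mathbf{m}$ and relabel so that the sequence itself satisfies $\mathbf{x}(t_n) \to \mathbf{m}$. Then every convergent subsequence of $\{t_n\}_n$ shares the same limit $\mathbf{m}$, so (A3) reduces to
\[
\{(\omega(t_0,\mathbf{x}_0), \mathbf{m}),(\omega(t_0,\mathbf{x}_0),\mathbf{m}_{\mathbf{f},\omega(t_0,\mathbf{x}_0)}^{+})\} \not\subset \Omega.
\]
I would argue this by contradiction: if both pairs belonged to $\Omega$, assumption (H$_\Omega$) would furnish a product neighborhood of $(\omega(t_0,\mathbf{x}_0),\mathbf{m})$ inside which (H$_\mathbf{f}$)(iii) supplies a $g$-integrable Lipschitz bound. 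Together with the integral representation of $\mathbf{x}$ on $[t_n,t)$ provided by Theorem~\ref{th:Fund-Th}, the Stieltjes Gr\"onwall inequality (Lemma~\ref{lem:g-Gronwall}) should upgrade the subsequential convergence $\mathbf{x}(t_n) \to \mathbf{m}$ into the full left-limit $\mathbf{x}(t) \to \mathbf{m}$ as $t \nearrow \omega(t_0,\mathbf{x}_0)$. Setting $\mathbf{x}(\omega(t_0,\mathbf{x}_0)):=\mathbf{m}$ in the spirit of Remark~\ref{rem:adding the point x(t_sup^-) does not properly prolongate x} and concatenating with the local solution furnished by Theorem~\ref{thm:loc-existence} at initial data $(\omega(t_0,\mathbf{x}_0),\mathbf{m})$ would produce a proper prolongation of $\mathbf{x}$, contradicting its maximality.

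The technical core will be the Gr\"onwall bootstrap used to upgrade subsequential convergence to a full left-limit: the Lipschitz constant $L_r$ depends on a radius $r$ around $\mathbf{m}$ that must simultaneously confine $\mathbf{x}$ on a whole left-neighborhood of $\omega(t_0,\mathbf{x}_0)$, which is a priori circular. I would break the circularity by exploiting the vanishing of the $g$-tail integrals $\int_{[s,\omega(t_0,\mathbf{x}_0))} L_r(u)\, d\mu_g(u)$ as $s \nearrow \omega(t_0,\mathbf{x}_0)$, ensuring that once $\mathbf{x}(t_n)$ lies sufficiently close to $\mathbf{m}$ the Gr\"onwall estimate confines the trajectory to the ball of radius $r$ throughout $[t_n,\omega(t_0,\mathbf{x}_0))$. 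A final technical point, if $\omega(t_0,\mathbf{x}_0)\in D_g$, is to match the value $\mathbf{m}_{\mathbf{f},\omega(t_0,\mathbf{x}_0)}^{+}$ with the right-limit of the concatenated solution at the $g$-atom, which condition (H$_\Omega$)(b) is tailored to accommodate.
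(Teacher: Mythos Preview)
The paper does not prove this theorem; it refers the reader to \cite[Theorem~4.2.6]{ThesisFL}. So there is no in-paper argument to compare against, and your sketch should be judged on its own merits.

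Your trichotomy is set up correctly: (A1) and (A2) are tautological, and the content lies in showing that once $\omega(t_0,\mathbf{x}_0)<\infty$ and some bounded sequence $\{t_n\}$ exists, the extra conclusion of (A3) must hold. Passing to a convergent subsequence and relabeling is legitimate, since (A3) only asks you to \emph{exhibit} a sequence, and for a convergent sequence all subsequential limits coincide. The contradiction strategy---assume both $(\omega,\mathbf m)$ and $(\omega,\mathbf m^{+}_{\mathbf f,\omega})$ lie in $\Omega$ and build a proper prolongation---is the right one.

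Two remarks on execution. First, the step you call ``Gr\"onwall bootstrap'' is really a continuation/a~priori bound argument rather than a Gr\"onwall comparison: with $r>0$ chosen so that $\overline{B_{\mathbb R^n}(\mathbf m,r)}\subset O$, the Lipschitz hypothesis (H$_{\mathbf f}$)(iii) gives an $L^1_g$-dominant $h(s)=\|\mathbf f(s,\mathbf x_0)\|+L_R(s)R$ for $\|\mathbf f(s,\cdot)\|$ on that ball, and the vanishing tail $\int_{[t_N,\omega)}h\,d\mu_g<r/3$ combined with $\|\mathbf x(t_N)-\mathbf m\|<r/3$ traps the trajectory in $B_{\mathbb R^n}(\mathbf m,r)$ on $[t_N,\omega)$ by the usual open--closed argument. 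This yields both boundedness of $\mathrm{Graph}(\mathbf x)$ and the Cauchy property, hence $\mathbf x(t)\to\mathbf m$. No genuine Gr\"onwall inequality is needed. Second, once you have the full left-limit $\mathbf x(\omega^-)=\mathbf m$, it is slightly cleaner to finish via Theorem~\ref{thm:extendible solution to the right} directly: $\mathrm{Graph}(\mathbf x)$ is now bounded and $A=\{(\omega,\mathbf m)\}$, $A^{+}=\{(\omega,\mathbf m^{+}_{\mathbf f,\omega})\}$, both assumed in $\Omega$, so $\mathbf x$ is extendible---contradiction. This avoids re-verifying by hand that the concatenation with a local solution at $(\omega,\mathbf m)$ is $g$-absolutely continuous across the junction, though your route through Theorem~\ref{thm:loc-existence} and (H$_\Omega$)(b) also works.
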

\begin{rem}\label{rem:3 alternatives for max interv}
  It is worth mentioning that the maximal solution $\mathbf{x}$ of~\eqref{eq:dyn-system-Prolongation} shall be $g$-absolutely continuous on every interval $[a,b]\subset I_{\mathbf{x}}$, which immediately  holds from the three alternatives of Theorem~\ref{thm:3-alternatives of max-interval-existence}. Notice that if Alternative~{\rm(A1)} holds then $I_{\mathbf{x}}=[t_0,\infty)$, if Alternative~{\rm(A2)} or~{\rm(A3)} holds, then  $I_{\mathbf{x}}=[t_0,\omega(t_0,\mathbf{x}_0))$ or $I_{\mathbf{x}}=[t_0,\omega(t_0,\mathbf{x}_0)]$.
\end{rem}

 Thanks to Theorem~\ref{thm:3-alternatives of max-interval-existence} established by Larivière, we obtain the following corollary, which ensures the global existence of the maximal solution over the whole interval $[t_0,\infty)$ in the case where $[t_0,\infty)\subset I$.

\begin{cor}\label{cor:compacity yields tm=infty}
Assume that~{\rm(H$_{\Omega}$)} and~{\rm(H$_{\mathbf{f}}$)} hold. Let  $\mathbf{x}\in\mathcal{S}(t_0,\mathbf{x}_0)$ be the maximal solution of~\eqref{eq:dyn-system-Prolongation}. If
$$
\Omega^+:=\{(t,\mathbf{u}) \in \Omega: \mathbf{u}_{\mathbf{f},t}^{+}\in \Omega \}=\Omega,
$$
and there exists a compact set $D\subset O$ such that $\mathbf{x}(t) \in D$ for every $t\in I_{\mathbf{x}}$, then $\omega(t_0,\mathbf{x}_0)=\infty$.
\end{cor}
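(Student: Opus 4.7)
My plan is to argue by contradiction via Theorem~\ref{thm:3-alternatives of max-interval-existence}. Assume that $\omega(t_0,\mathbf{x}_0)<\infty$. Then one of the alternatives (A2) or (A3) must hold, and I intend to eliminate both.

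First, to rule out (A2), I will use the hypothesis that $\mathbf{x}(I_{\mathbf{x}})\subset D$ with $D$ compact. For any sequence $\{t_n\}_n\subset I_\mathbf{x}$ with $t_n\nearrow \omega(t_0,\mathbf{x}_0)$, the sequence $\{\mathbf{x}(t_n)\}_n$ lies in the compact (hence bounded) set~$D$; this directly contradicts the unboundedness claim in (A2). Hence (A2) cannot occur, and alternative (A3) must be in force.

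Next, I address (A3). Using again the compactness of $D$, applying Bolzano--Weierstrass to any sequence $t_n\nearrow \omega(t_0,\mathbf{x}_0)$ yields a subsequence $\{t_{n_k}\}_k$ such that $\mathbf{x}(t_{n_k})\to\mathbf{m}$ for some $\mathbf{m}\in D$. Since $D\subset O$, this $\mathbf{m}$ lies in $O$, and (granted the implicit inclusion $[t_0,\infty)\subset I$ stated in the paragraph preceding the corollary) $\omega(t_0,\mathbf{x}_0)\in I$. Consequently
\[
(\omega(t_0,\mathbf{x}_0),\mathbf{m})\in I\times O=\Omega.
\]
Invoking now the hypothesis $\Omega^{+}=\Omega$, the definition of $\Omega^{+}$ yields
\[
\bigl(\omega(t_0,\mathbf{x}_0),\mathbf{m}_{\mathbf{f},\omega(t_0,\mathbf{x}_0)}^{+}\bigr)\in\Omega
\]
as well. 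Therefore $\{(\omega(t_0,\mathbf{x}_0),\mathbf{m}),(\omega(t_0,\mathbf{x}_0),\mathbf{m}_{\mathbf{f},\omega(t_0,\mathbf{x}_0)}^{+})\}\subset\Omega$, which contradicts the defining feature of alternative (A3). Thus (A3) is also impossible, forcing (A1) to hold, i.e. $\omega(t_0,\mathbf{x}_0)=\infty$.

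I expect the only delicate point to be the membership $(\omega(t_0,\mathbf{x}_0),\mathbf{m})\in\Omega$; the compactness of $D$ handles the $\mathbf{m}$-coordinate cleanly, while the $\omega$-coordinate requires the standing inclusion $[t_0,\infty)\subset I$ used throughout this subsection. The second point $(\omega(t_0,\mathbf{x}_0),\mathbf{m}_{\mathbf{f},\omega(t_0,\mathbf{x}_0)}^{+})\in\Omega$ is then immediate from the assumption $\Omega^{+}=\Omega$, which is precisely why that hypothesis has been imposed.
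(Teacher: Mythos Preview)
Your proof is correct and follows essentially the same approach as the paper: argue by contradiction, rule out alternative~(A2) using the boundedness provided by the compact set~$D$, then contradict alternative~(A3) by producing (via compactness of~$D$) a convergent subsequence whose limit $\mathbf{m}\in D\subset O$ gives $(\omega(t_0,\mathbf{x}_0),\mathbf{m})\in\Omega$, and finally invoke $\Omega^{+}=\Omega$ for the second point. The only cosmetic difference is that the paper works directly with the particular sequence furnished by~(A3), whereas you phrase it for an arbitrary sequence $t_n\nearrow\omega(t_0,\mathbf{x}_0)$; since your argument applies in particular to that sequence, the contradiction with~(A3) goes through just the same.
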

\begin{proof}
Let us assume that $\omega(t_0,\mathbf{x}_0)<\infty$. Since $D$ is compact, for all $\{t_n\}_n \subset I_{\mathbf{x}}$ such that $t_n \nearrow \omega(t_0,\mathbf{x}_0)$, $\{\mathbf{x}(t_n)\}_n$ is bounded.  Thus, it results from Theorem~\ref{thm:3-alternatives of max-interval-existence} that Alternative~{\rm(A3)} holds. Therefore, there exists $\{t_n\}_n \subset I_{\mathbf{x}}$ with $t_n \nearrow \omega(t_0,\mathbf{x}_0)$ and $\{(t_n ,\mathbf{x}(t_n))\}_n$ is bounded, such that for a fixed subsequence $\{t_{n_k}\}_k$ verifying $\mathbf{x}(t_{n_k}) \to \mathbf{m}$, we have that
  $$
  \{(\omega(t_0,\mathbf{x}_0), \mathbf{m}),(\omega(t_0,\mathbf{x}_0),\mathbf{m}_{\mathbf{f},\omega(t_0,\mathbf{x}_0)}^{+})\} \not\subset\Omega.
   $$
   On the other hand,  $D$ being a compact set yields that
  $$
  (t_{n_k},\mathbf{x}(t_{n_k})) \to (\omega(t_0,\mathbf{x}_0),\mathbf{m}) \in [t_0,\omega(t_0,\mathbf{x}_0)] \times D \subset \Omega.
  $$
 Now, since $\Omega^+=\Omega$, we deduce that $(\omega(t_0,\mathbf{x}_0),\mathbf{m}_{\mathbf{f},\omega(t_0,\mathbf{x}_0)}^{+}) \in \Omega$, which contradicts
 $$
 \{(\omega(t_0,\mathbf{x}_0), \mathbf{m}),(\omega(t_0,\mathbf{x}_0),\mathbf{m}_{\mathbf{f},\omega(t_0,\mathbf{x}_0)}^{+})\} \not\subset\Omega.
 $$
  Hence, $\omega(t_0,\mathbf{x}_0)=\infty$.
\end{proof}

Now based on Theorems~\ref{thm:extendible solution to the right} and~\ref{thm:3-alternatives of max-interval-existence}, we provide a characterization of the maximal solution of the problem~\eqref{eq:dyn-system-Prolongation}.
\begin{thm}\label{thm:characterize max sol}
 Assume that~{\rm(H$_{\Omega}$)} and~{\rm(H$_{\mathbf{f}}$)} hold. Let $\mathbf{x}\in\mathcal{S}(t_0,\mathbf{x}_0)$. The following assumptions are equivalent:
  \begin{enumerate}[$(1)$]
    \item $\mathbf{x}$ is maximal;
    \item for every compact set $K\subset \Omega$, there exists $t_K \in I_\mathbf{x}$ such that
    $$
    \{(t,\mathbf{x}(t)),(t,\mathbf{x}(t)+\mu_g(\{t\})\mathbf{f}(t,\mathbf{x}(t)))\}\not\subset K,
    $$
 for all $t\ge t_K$, $t\in I_\mathbf{x}$.
  \end{enumerate}
\end{thm}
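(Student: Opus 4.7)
The plan is to prove the equivalence by contraposition in both directions, using Theorem~\ref{thm:extendible solution to the right} for $(2)\Rightarrow(1)$ and Theorem~\ref{thm:3-alternatives of max-interval-existence} for $(1)\Rightarrow(2)$.

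For $(2)\Rightarrow(1)$, I would assume $\mathbf{x}$ is not maximal. Then by Theorem~\ref{thm:extendible solution to the right}, $\text{Graph}(\mathbf{x})$ is bounded and $A\cup A^+\subset\Omega$, and I will build one compact set $K\subset\Omega$ containing every pair $(t,\mathbf{x}(t))$ and $(t,\mathbf{x}_{\mathbf{f},t}^+)$ with $t\in I_{\mathbf{x}}$, which directly contradicts~(2) for this~$K$. The natural candidate is $K:=\overline{\text{Graph}(\mathbf{x})}\cup A^+$: it is compact since $\overline{\text{Graph}(\mathbf{x})}$ is closed and bounded in $\mathbb{R}^{n+1}$, and $A^+$ is the continuous image of the compact set~$A$ under $\mathbf{s}\mapsto\mathbf{s}+\mu_g(\{t_{sup}\})\mathbf{f}(t_{sup},\mathbf{s})$. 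The inclusion $K\subset\Omega$ follows from $A\cup A^+\subset\Omega$ together with the observation that the extra limit points of the graph, occurring at times $t\in D_g\cap I_{\mathbf{x}}$ with $t<t_{sup}$, take the form $(t,\mathbf{x}(t^+))=(t,\mathbf{x}_{\mathbf{f},t}^+)$ by the Stieltjes equation at the jump, and lie in $\Omega$ because the solution continues into~$O$. For every $t\in I_{\mathbf{x}}$, the pair $(t,\mathbf{x}_{\mathbf{f},t}^+)$ is then in $K$: it equals $(t,\mathbf{x}(t))$ when $t\notin D_g$, and either $(t,\mathbf{x}(t^+))\in\overline{\text{Graph}(\mathbf{x})}$ or $(t_{sup},\mathbf{x}_{\mathbf{f},t_{sup}}^+)\in A^+$ when $t\in D_g$.

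For $(1)\Rightarrow(2)$, I argue by contradiction: assume $\mathbf{x}$ is maximal and~(2) fails for some compact $K\subset\Omega$. Then one can extract a monotone sequence $t_n\in I_{\mathbf{x}}$ with $t_n\to t_{sup}$ and $\{(t_n,\mathbf{x}(t_n)),(t_n,\mathbf{x}_{\mathbf{f},t_n}^+)\}\subset K$. Boundedness of $K$ forces $t_{sup}<\infty$ and boundedness of $\{\mathbf{x}(t_n)\}$, ruling out Alternatives~{\rm(A1)} and~{\rm(A2)} of Theorem~\ref{thm:3-alternatives of max-interval-existence}, so~{\rm(A3)} must hold. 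By compactness of $K$ I pass to subsequences so that $(t_{n_k},\mathbf{x}(t_{n_k}))\to(t_{sup},\mathbf{m})\in K\subset\Omega$ and $(t_{n_k},\mathbf{x}_{\mathbf{f},t_{n_k}}^+)\to(t_{sup},\mathbf{q})\in K\subset\Omega$. The key step is to identify $\mathbf{q}=\mathbf{m}_{\mathbf{f},t_{sup}}^+$, which forces $\{(t_{sup},\mathbf{m}),(t_{sup},\mathbf{m}_{\mathbf{f},t_{sup}}^+)\}\subset\Omega$ and contradicts the escape property of~{\rm(A3)} for this convergent subsequence. When $t_{sup}\notin D_g$, $\mathbf{m}_{\mathbf{f},t_{sup}}^+=\mathbf{m}$, and the identification follows from $\mu_g(\{t_{n_k}\})\to 0$ combined with the Lipschitz bound~{\rm(H$_{\mathbf{f}}$)(iii)} and the summability of $\mu_g(\{t\})\|\mathbf{f}(t,\mathbf{x}_0)\|$ and $\mu_g(\{t\})L_r(t)$ on $[t_0,t_{sup}]$. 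When $t_{sup}\in D_g\cap I_{\mathbf{x}}$, taking $t'=t_{sup}$ in the negation of~(2) forces $t_n=t_{sup}$ eventually and gives the identification directly.

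The main obstacle will be the remaining subcase, $t_{sup}\in D_g$ with $t_{sup}\notin I_{\mathbf{x}}$: I must show that the left limit $\mathbf{x}(t_{sup}^-)$ exists and equals $\mathbf{m}\in O$, so that Remark~\ref{rem:adding the point x(t_sup^-) does not properly prolongate x} applies and brings $t_{sup}$ into $I_{\mathbf{x}}$, reducing to the previous subcase. This amounts to ruling out oscillation of $\mathbf{x}$ near $t_{sup}$ and will rest on the bounded variation structure of $g$-absolutely continuous solutions together with the local Lipschitz-integrability of~{\rm(H$_{\mathbf{f}}$)}.
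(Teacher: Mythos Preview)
Your direction $(2)\Rightarrow(1)$ is essentially the paper's: both build the compact set $K=\overline{\text{Graph}(\mathbf{x})}\cup\{(t_{sup},\mathbf{s}_{\mathbf{f},t_{sup}}^+)\}$ (your $A^+$ is in fact that singleton once the left limit exists) and check that every pair $\{(t,\mathbf{x}(t)),(t,\mathbf{x}_{\mathbf{f},t}^+)\}$ sits inside it.

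For $(1)\Rightarrow(2)$ your route differs from the paper's and has a real gap. Alternative~{\rm(A3)} in Theorem~\ref{thm:3-alternatives of max-interval-existence} asserts the escape property only for convergent subsequences of \emph{one particular} bounded sequence, not for arbitrary sequences. Your sentence ``contradicts the escape property of~{\rm(A3)} for this convergent subsequence'' applies {\rm(A3)} to \emph{your} sequence $\{t_n\}$ extracted from the negation of~(2); nothing in {\rm(A3)} says the limit $\mathbf m$ obtained along your subsequence must satisfy $\{(t_{sup},\mathbf m),(t_{sup},\mathbf m_{\mathbf f,t_{sup}}^+)\}\not\subset\Omega$. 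The identification $\mathbf q=\mathbf m_{\mathbf f,t_{sup}}^+$ that you carry out (correctly) is therefore not enough for a contradiction. What you actually need---and only flag in the last subcase---is that $\mathbf x(t_{sup}^-)$ exists, so that \emph{all} subsequential limits coincide and the {\rm(A3)}-sequence gives the same $\mathbf m$; this obstacle is equally present in your case $t_{sup}\notin D_g$.

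The paper bypasses this by arguing forward rather than by contradiction: it splits according to the three alternatives and, under {\rm(A3)}, treats separately $I_\mathbf{x}=[t_0,\omega)$ (where it uses that $\mathbf x(\omega^-)$ exists with $(\omega,\mathbf x(\omega^-))\notin\Omega$, so $(t,\mathbf x(t))$ eventually leaves any compact $K\subset\Omega$) and $I_\mathbf{x}=[t_0,\omega]$ (where $\omega\in D_g$ and $(\omega,\mathbf x(\omega)_{\mathbf f,\omega}^+)\notin\Omega$, so the second element of the pair eventually leaves $K$). If you want to keep the contrapositive structure, you should first establish that under {\rm(A3)} the left limit $\mathbf x(t_{sup}^-)$ exists (hence $A$ is a singleton) before invoking the escape property; once you have that, your identification step and the contradiction go through uniformly in all three subcases.
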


\begin{proof}
 Assume that  $\mathbf{x}$ is maximal. By Theorem~\ref{thm:3-alternatives of max-interval-existence}, two cases may occur:

\textbf{Case 1:} if $\omega(t_0,\mathbf{x}_0)=\sup I_{\mathbf{x}}=\infty$,  by contradiction assume that there exist a compact set $K \subset \Omega$ and a sequence $\{t_n\}_n \subset I_\mathbf{x}$ such that
$$
t_n\nearrow \omega(t_0,\mathbf{x}_0), \,  \text{ and }     \{(t_n,\mathbf{x}(t_n)),(t_n,\mathbf{x}(t_n)+\mu_g(\{t_n\})\mathbf{f}(t_n,\mathbf{x}(t_n)))\} \subset K \text{ for all } n\in\mathbb{N}.
$$
This implies that $\{(t_n,\mathbf{x}(t_n))\}_n$ is bounded. Therefore, there exists a convergent subsequence $\{(t_{n_k},\mathbf{x}(t_{n_k}))\}_k$  such that
$$
(t_{n_k},\mathbf{x}(t_{n_k})) \to (\tau,\mathbf{u})\in K \subset [t_0,\infty)\times O.
$$
Particularly, $t_{n_k} \nearrow \tau$ which contradicts $t_n \nearrow \omega(t_0,\mathbf{x}_0)=\infty$.

\textbf{Case 2:} if $\omega(t_0,\mathbf{x}_0)< \infty$, then by Theorem~\ref{thm:3-alternatives of max-interval-existence}, there are two subcases:

\textbf{Subcase 1:} if Alternative~{\rm(A2)} holds, then $I_\mathbf{x}=[t_0,\omega(t_0,\mathbf{x}_0))$ and $\|\mathbf{x}(t)\| \to \infty$ as $t \nearrow \omega(t_0,\mathbf{x}_0)$. Thus, for every $M>0$ there exists $t^* \in I_\mathbf{x}$ such that $\|\mathbf{x}(t)\| \ge M$ for all $t\ge t^*$ with $t\in I_\mathbf{x}$. Hence, for every compact $K \subset \Omega$ there exists $t_K \in I_\mathbf{x}$ such that for all $t\ge t_K$ with $t\in I_\mathbf{x}$, we have $(t,\mathbf{x}(t))\notin K$, in particular $\{(t,\mathbf{x}(t)), (t,\mathbf{x}(t)+\mu_g(\{t\})\mathbf{f}(t,\mathbf{x}(t)))\} \not\subset K$.

\textbf{Subcase 2:} if Alternative~{\rm(A3)} holds, then we distinguish two cases. If $I_\mathbf{x}=[t_0,\omega(t_0,\mathbf{x}_0))$, then  $\text{Graph}(\mathbf{x})$ approaches the boundary of $\Omega$ and $$
(\omega(t_0,\mathbf{x}_0),\mathbf{x}(\omega(t_0,\mathbf{x}_0)^-))\notin \Omega.
$$
Thus, for every compact $K \subset \Omega$, there exists $t_K \in [t_0,\omega(t_0,\mathbf{x}_0))$ such that, for all $t\ge t_K$ with $t\in I_\mathbf{x}$, $(t,\mathbf{x}(t)) \notin K$, which yields
$$
\{(t,\mathbf{x}(t)),(t,\mathbf{x}(t)+\mu_g(\{t\})\mathbf{f}(t,\mathbf{x}(t)))\} \not\subset K.
$$
Now, if  $I_\mathbf{x}=[t_0,\omega(t_0,\mathbf{x}_0)]$, then $\omega(t_0,\mathbf{x}_0) \in D_g$, $(\omega(t_0,\mathbf{x}_0),\mathbf{x}(\omega(t_0,\mathbf{x}_0)))\in \Omega$ and
 $$
(\omega(t_0,\mathbf{x}_0),\mathbf{x}(\omega(t_0,\mathbf{x}_0))+\mu_g(\{\omega(t_0,\mathbf{x}_0)\})\mathbf{f}(\omega(t_0,\mathbf{x}_0), \mathbf{x}(\omega(t_0,\mathbf{x}_0))))\notin\Omega.
$$
 Thus, for every compact $K \subset \Omega$, there exists $t_K \in [t_0,\omega(t_0,\mathbf{x}_0)]$ such that, for all $t\ge t_K$ with $t\in I_\mathbf{x}$, $(t,\mathbf{x}(t)+\mu_g(\{t\})\mathbf{f}(t,\mathbf{x}(t))) \notin K$, which yields
$$
\{(t,\mathbf{x}(t)),(t,\mathbf{x}(t)+\mu_g(\{t\})\mathbf{f}(t,\mathbf{x}(t)))\} \not\subset K.
$$

Conversely, by contradiction, let us assume that $\mathbf{x}:I_{\mathbf{x}}\to \mathbb{R}^n$ is not maximal. Thus, $t_{sup}=\sup I_{\mathbf{x}} < \infty$ and  $\mathbf{x}$ is extendible to the right. From Theorem~\ref{thm:extendible solution to the right}, it follows that $\text{Graph}(\mathbf{x})$ is bounded and $A\cup A^+ \subset \Omega$. Thus, $[t_0,t_{sup}]\times\overline{\text{Graph}(\mathbf{x})}$ is compact, $\mathbf{x}(t_{sup}^-)=\mathbf{x}(t_{sup})=\mathbf{s} \in O$ and $\mathbf{s}_{\mathbf{f},t_{sup}}^{+} \in O$. Consequently,  by Theorem~\ref{thm:loc-existence}, there exists a prolongation to the right $\hat{\mathbf{x}}:[t_0,t_{sup} +\epsilon) \to \mathbb{R}^n$ such that $\hat{\mathbf{x}}|_{I_\mathbf{x}}=\mathbf{x}$. Therefore, for the compact $K=\overline{\text{Graph}(\mathbf{x})}\cup \{(t_{sup},\mathbf{s}_{\mathbf{f},t_{sup}}^{+})\}\subset\Omega$, we have that
$$
\{(t,\mathbf{x}(t)),(t,\mathbf{x}(t)+\mu_g(\{t\})\mathbf{f}(t,\mathbf{x}(t)))\} \subset K \text{ for all } t\in I_\mathbf{x},
$$
which yields a contradiction. Hence, $\mathbf{x}$ is maximal.
\end{proof}

The logical negation of Theorem~\ref{thm:characterize max sol} also provides an interesting characterization of extendible solutions.

\begin{cor}\label{cor:characterize extendible sol}
Assume that~{\rm (H$_{\Omega}$)} and~{\rm(H$_{\mathbf{f}}$)} hold. Let $\mathbf{x}\in\mathcal{S}(t_0,\mathbf{x}_0)$. The following assumptions are equivalent:
\begin{enumerate}[$(1)$]
\item $\mathbf{x}$ extendible to the right;
\item there exist a compact set $K\subset \Omega$, and a sequence $\{t_n\}_n\subset I_\mathbf{x}$ with $t_n \nearrow \omega(t_0,\mathbf{x}_0)$ such that
    $$
    \{(t_n,\mathbf{x}(t_n)),(t_n,\mathbf{x}(t_n)+\mu_g(\{t_n\})\mathbf{f}(t_n,\mathbf{x}(t_n)))\}\subset K, \text{ for all }n\in \mathbb{N}.
    $$
  \end{enumerate}
\end{cor}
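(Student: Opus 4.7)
The plan is to recognize this corollary as the exact logical contrapositive of Theorem~\ref{thm:characterize max sol}. Since, for $\mathbf{x}\in\mathcal{S}(t_0,\mathbf{x}_0)$, being \emph{extendible to the right} is precisely the negation of being \emph{maximal}, negating both sides of the equivalence stated in Theorem~\ref{thm:characterize max sol} should yield the claim directly. Thus no new dynamical input is required; the task is essentially formal.

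For the forward direction (1)$\Rightarrow$(2), I would assume $\mathbf{x}$ is extendible to the right, so it is not maximal. By negating the implication (1)$\Rightarrow$(2) in Theorem~\ref{thm:characterize max sol}, there must exist a compact set $K\subset\Omega$ such that for every $t_K\in I_\mathbf{x}$ there is some $t\ge t_K$ with $t\in I_\mathbf{x}$ satisfying
$$
\{(t,\mathbf{x}(t)),\,(t,\mathbf{x}(t)+\mu_g(\{t\})\mathbf{f}(t,\mathbf{x}(t)))\}\subset K.
$$
To produce the sequence required by (2), I would fix an auxiliary increasing sequence $s_n\in I_\mathbf{x}$ with $s_n\nearrow \omega(t_0,\mathbf{x}_0)$ and, for each $n$, apply the above to $t_K=s_n$ to select some $t_n\ge s_n$ in $I_\mathbf{x}$ with the desired inclusion. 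A routine subsequence extraction makes $\{t_n\}_n$ strictly increasing, and the sandwich $s_n \le t_n < \omega(t_0,\mathbf{x}_0)$ forces $t_n\nearrow \omega(t_0,\mathbf{x}_0)$.

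For the reverse direction (2)$\Rightarrow$(1), I would proceed by contradiction: suppose $K$ and $\{t_n\}_n$ as in (2) exist, and that $\mathbf{x}$ is nonetheless maximal. Then by Theorem~\ref{thm:characterize max sol}~(2) applied to this particular $K$, there exists $t_K\in I_\mathbf{x}$ beyond which $\{(t,\mathbf{x}(t)),(t,\mathbf{x}(t)+\mu_g(\{t\})\mathbf{f}(t,\mathbf{x}(t)))\}\not\subset K$. But since $t_n\nearrow \omega(t_0,\mathbf{x}_0)$, eventually $t_n\ge t_K$, directly contradicting the assumption in~(2). Hence $\mathbf{x}$ cannot be maximal, which by definition means it is extendible to the right.

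The proof poses no genuine obstacle since Theorem~\ref{thm:characterize max sol} has already been established. The only mildly delicate point is the sequence selection in (1)$\Rightarrow$(2), where one must verify that the pointwise existence of admissible $t$'s beyond every $t_K$ can be upgraded to a strictly increasing sequence tending to $\omega(t_0,\mathbf{x}_0)$, but this is a standard selection argument.
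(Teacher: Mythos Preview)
Your proposal is correct and matches the paper's approach exactly: the paper simply states that this corollary is ``the logical negation of Theorem~\ref{thm:characterize max sol}'' and gives no further proof. Your writeup actually supplies more detail than the paper does, in particular the routine sequence-selection argument for (1)$\Rightarrow$(2), which the paper leaves implicit.
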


Given the generalization of the Gr\"{o}nwall lemma, Lemma~\ref{lem:g-Gronwall}, we can state the next theorem which provides the global existence of the solution over $[t_0,\infty)$ and a priori bound of the solution when $[t_0,\infty) \subset I$ and $O=\mathbb{R}^n$.

\begin{thm}\label{thm:global-existence-with-linear-growth-condition}
Assume that~{\rm (H$_{\Omega}$)} and~{\rm(H$_{\mathbf{f}}$)} hold. Let  $[t_0,\infty) \subset I$ and $O=\mathbb{R}^n$ and assume that the conditions in~{\rm(H$_{\mathbf{f}}$)} hold for $\mathbf{f}:[t_0,\infty)\times \mathbb{R}^n\to \mathbb{R}^n$ satisfying the linear growth condition:
\begin{enumerate}
  \item [{{\rm(H$_{LG}$)}}] there exist functions
  $k\in \mathcal{L}^{1}_{g,loc}([t_0,\infty),\mathbb{R})$, and $p\in \mathcal{L}^{1}_{g,loc}([t_0,\infty),[0,\infty))$ such that
$$
\|\mathbf{f}(t,\mathbf{u})\| \le k(t)+ p(t)\|\mathbf{u}\|,
$$
for $g$-almost all $t\in[t_0,\infty)$ and all $\mathbf{u}\in O$.
  \end{enumerate}
 If $\mathbf{x} \in \mathcal{S}(t_0,\mathbf{x}_0)$ is the maximal solution of~\eqref{eq:dyn-system-Prolongation},  then
\begin{equation}\label{eq:priori bound of global sol}
  \|\mathbf{x}(t)\|\le e_p(t,t_0) \Big(\int_{[t_0,t)} \frac{e_{p}^{-1}(s,t_0) k(s)}{1+p(s)\mu_g(\{s\})}\, d\mu_g(s)+\|\mathbf{x}_0\|\Big),
\end{equation}
for all $t\in [t_0,T]$, $T\in I_{\mathbf{x}}$. Moreover, we have $I_{\mathbf{x}}=[t_0,\infty)$.
\end{thm}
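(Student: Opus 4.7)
The plan is to derive the estimate~\eqref{eq:priori bound of global sol} by feeding a scalar $g$-absolutely continuous majorant into the Stieltjes Gr\"{o}nwall lemma (Lemma~\ref{lem:g-Gronwall}), and then to rule out $\omega(t_0,\mathbf{x}_0)<\infty$ with the help of Corollary~\ref{cor:compacity yields tm=infty}. Fix $T\in I_{\mathbf{x}}$. Rewriting $\mathbf{x}$ in integral form on $[t_0,T]$ via Theorem~\ref{th:Fund-Th}, I would introduce
\[
w(t):=\|\mathbf{x}_0\|+\int_{[t_0,t)}\bigl(k(s)+p(s)\|\mathbf{x}(s)\|\bigr)\,d\mu_g(s).
\]
Since $\mathbf{x}$ is $g$-continuous and thus bounded on $[t_0,T]$, the integrand lies in $\mathcal{L}^1_g([t_0,T),\mathbb{R})$, so by Theorem~\ref{th:Fund-Th} we have $w\in\mathcal{AC}_g([t_0,T],\mathbb{R})$ with $w_g'(t)=k(t)+p(t)\|\mathbf{x}(t)\|$ for $g$-almost all $t\in[t_0,T)$. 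The linear growth condition~{\rm(H$_{LG}$)} combined with the integral representation gives $\|\mathbf{x}(t)\|\le w(t)$, whence $w_g'(t)\le k(t)+p(t)\,w(t)$ for $g$-almost all $t\in[t_0,T)$.

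Because $p\ge 0$, the non-resonance condition $1+p(t)\mu_g(\{t\})>0$ is automatic on $D_g\cap[t_0,T)$, so Lemma~\ref{lem:g-Gronwall} applies to $w$ and yields precisely the bound~\eqref{eq:priori bound of global sol} on $[t_0,T]$. Since $T\in I_{\mathbf{x}}$ was arbitrary, the estimate holds throughout $I_{\mathbf{x}}$. For the global existence statement, I would argue by contradiction and assume $\omega:=\omega(t_0,\mathbf{x}_0)<\infty$. Because the exponential $e_p(\cdot,t_0)$ is $g$-absolutely continuous on $[t_0,\omega]$, it is bounded above and (being strictly positive with $\tilde{p}\in\mathcal{L}^1_g([t_0,\omega),\mathbb{R})$) bounded below by a positive constant there; together with $k\in\mathcal{L}^1_g([t_0,\omega),\mathbb{R})$, this makes the right-hand side of~\eqref{eq:priori bound of global sol} uniformly bounded on $[t_0,\omega)\cap I_{\mathbf{x}}$ by some constant $M>0$. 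Hence $\mathbf{x}(t)\in D:=\overline{B_{\mathbb{R}^n}(0,M)}$ for every $t\in I_{\mathbf{x}}$, where $D$ is a compact subset of $O=\mathbb{R}^n$. Because $O=\mathbb{R}^n$ and $[t_0,\infty)\subset I$, the equality $\Omega^+=\Omega$ is immediate, and Corollary~\ref{cor:compacity yields tm=infty} then forces $\omega(t_0,\mathbf{x}_0)=\infty$, contradicting the assumption; therefore $I_{\mathbf{x}}=[t_0,\infty)$.

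The main technical point I anticipate is the verification that the scalar majorant $w$ is genuinely $g$-absolutely continuous with the expected $g$-derivative so that Lemma~\ref{lem:g-Gronwall} is applicable; this hinges on the local $g$-integrability of $k+p\|\mathbf{x}(\cdot)\|$, which itself follows from the boundedness of the $g$-continuous solution $\mathbf{x}$ on compact subintervals. Once that point is settled, the Gr\"{o}nwall estimate and the final contradiction leading to $\omega=\infty$ are essentially bookkeeping.
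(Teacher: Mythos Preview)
Your argument is correct and reaches the same conclusion, but the route differs from the paper's in two places worth noting. First, to obtain a scalar function eligible for Lemma~\ref{lem:g-Gronwall}, the paper works directly with $t\mapsto\|\mathbf{x}(t)\|$: it invokes Lemma~\ref{lem:Lipschitz o g-absolute func} (Lipschitz postcomposition preserves $g$-absolute continuity) to get $\|\mathbf{x}(\cdot)\|\in\mathcal{AC}_g([t_0,T],\mathbb{R})$ and then uses the pointwise inequality $(\|\mathbf{x}\|)_g'(t)\le\|\mathbf{x}_g'(t)\|$. Your auxiliary majorant $w(t)=\|\mathbf{x}_0\|+\int_{[t_0,t)}(k+p\|\mathbf{x}\|)\,d\mu_g$ sidesteps both of those ingredients: $w$ is $g$-absolutely continuous by construction, and the comparison $\|\mathbf{x}\|\le w$ comes straight from the integral representation and~{\rm(H$_{LG}$)}. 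This is arguably cleaner, since the inequality $(\|\mathbf{x}\|)_g'\le\|\mathbf{x}_g'\|$ is stated without proof in the paper and requires separate verification at points of $D_g$. Second, for the contradiction step the paper bounds $\|\mathbf{x}\|$ and $\|\mathbf{m}_{\mathbf{f},\omega}^{+}\|$ explicitly and appeals to Corollary~\ref{cor:characterize extendible sol}, whereas you package the trajectory into a compact ball and invoke Corollary~\ref{cor:compacity yields tm=infty} after checking $\Omega^+=\Omega$; since $O=\mathbb{R}^n$ the latter is immediate, so both routes close the argument with comparable effort.
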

\begin{proof}
Let $\mathbf{x} \in\mathcal{S}(t_0,\mathbf{x}_0)$ be the maximal solution of~\eqref{eq:dyn-system-Prolongation} defined on the maximal interval $I_{\mathbf{x}}$ with $\omega(t_0,\mathbf{x}_0)=\sup I_{\mathbf{x}}$. By~{\rm(H$_{LG}$)}, we have for $T>t_0$ with $T\in I_{\mathbf{x}}$ that
$$
(\|\mathbf{x}\|)_g'(t) \le \|\mathbf{x}_g'(t)\|=\|\mathbf{f}(t,\mathbf{x}(t))\| \le k(t)+ p(t)\|\mathbf{x}(t)\| \quad \text{for $g$-almost all } t\in [t_0,T).
$$
Observe that $\|\cdot\|$ is Lipschitz continuous on $\mathbb{R}^n$. Thus, by Lemma~\ref{lem:Lipschitz o g-absolute func}, $\|\mathbf{x}(\cdot)\| \in \mathcal{AC}_g([t_0,T],\mathbb{R})$. Using the generalized version of the Gr\"{o}nwall Lemma, Lemma~\ref{lem:g-Gronwall}, we obtain
$$
\|\mathbf{x}(t)\|\le  e_p(t,t_0) \Big(\int_{[t_0,t)} \frac{e_{p}^{-1}(s,t_0) k(s)}{1+p(s)\mu_g(\{s\})}\, d\mu_g(s)+\|\mathbf{x}_0\|\Big) \quad\text{ for all } t\in [t_0,T].
$$
Assume by contradiction that $\omega(t_0,\mathbf{x}_0)<\infty$. Thus, for all  $t\in [t_0,T]$, we obtain a larger bound of $\mathbf{x}$:
\begin{equation*}
     \|\mathbf{x}(t)\|  \le \sup_{t\in [t_0,T]} e_p(t,t_0) \Big(\int_{[t_0,t)} \frac{e_{p}^{-1}(s,t_0) |k(s)|}{1+p(s)\mu_g(\{s\})}\, d\mu_g(s)+\|\mathbf{x}_0\|\Big).
\end{equation*}
Since  $p\in \mathcal{L}^{1}_{g,loc}([t_0,\infty),[0,\infty))$, then  $e_{p}(t,t_0)(1+p(t)\mu_g(\{t\}))\ge 1$ for all $t\in [t_0,T]$. Consequently,
\begin{align*}
     \|\mathbf{x}(t)\|&\le \sup_{t\in [t_0,T]}e_p(t,t_0) \Big(\int_{[t_0,t)} |k(s)|\, d\mu_g(s)+\|\mathbf{x}_0\|\Big)  \\
   &=  e_p(T,t_0) \Big(\|k\|_{\mathcal{L}^{1}_{g}([t_0,T),\mathbb{R})}+\|\mathbf{x}_0\|\Big).
\end{align*}
As $T\nearrow \omega(t_0,\mathbf{x}_0) <\infty$, $\mathbf{x}(t) \in B_{\mathbb{R}^n}(\mathbf{0},M_1)$ for all $t\in I_{\mathbf{x}}$ where
$$
M_1=e_p(\omega(t_0,\mathbf{x}_0),t_0) \Big(\|k\|_{\mathcal{L}^{1}_{g}([t_0,\omega(t_0,\mathbf{x}_0)),\mathbb{R})}+\|\mathbf{x}_0\|\Big).
$$
As $\mathbf{x}(\omega(t_0,\mathbf{x}_0)^-)=\mathbf{m} \in \mathbb{R}^n$, then $\mathbf{m}_{\mathbf{f},\omega(t_0,\mathbf{x}_0)}^{+} \in \mathbb{R}^n$. Therefore, for the compact set $K=[t_0, \omega(t_0,\mathbf{x}_0)]\times B_{\mathbb{R}^n}(\mathbf{0},M)$ with $M=\max\{M_1, \|\mathbf{m}_{\mathbf{f},\omega(t_0,\mathbf{x}_0)}^{+}\|\}$, we have that
$$
\{(t,\mathbf{x}(t)),(t,\mathbf{x}(t)+\mu_g(\{t\})\mathbf{f}(t,\mathbf{x}(t)))\} \subset K \text{ for all } t\in I_{\mathbf{x}}.
$$
By Corollary~\ref{cor:characterize extendible sol}, we obtain that $\mathbf{x}$ is extendible to the right which is a contradiction. Hence, $\omega(t_0,\mathbf{x}_0)=\infty$.
\end{proof}

\section{Lyapunov-like stability results}

In this section, we introduce Lyapunov-type results in the context of Stieltjes dynamical systems, based on the classical Lyapunov's second method, considering Stieltjes dynamical systems of the form:
\begin{equation}\label{eq:stability-dyn-system}
\begin{aligned}
     \mathbf{x}_{g}'(t) &= \mathbf{f}(t,\mathbf{x}(t))  \quad \text{for $g$-almost all $t\ge t_0\ge 0$, $t\in I$},\\
\end{aligned}
\end{equation}
where $\mathbf{f}=(f_1,\dots,f_n):\mathbb{R}^+\times B_{\mathbb{R}^n}(\mathbf{0},r_0)\to\mathbb{R}^n$ satisfying $\mathbf{f}(t,\mathbf{0})=\mathbf{0}$ for all $t\ge 0$. This study permits to draw conclusions about the local behavior of solutions of the dynamical system~\eqref{eq:stability-dyn-system} around the equilibrium $\mathbf{x}=\mathbf{0}$, which is called in the stability literature the {\it trivial solution}. Before stating the assumptions on $\mathbf{f}$ to guarantee the existence and the uniqueness of solution of the system~\eqref{eq:stability-dyn-system} starting at some $t_0\ge 0$, notice that if $t_0 \in D_g$, and $\mathbf{x}_0 \in B_{\mathbb{R}^n}(\mathbf{0},r_0)$ such that $\mathbf{x}_0+\mu_g(\{t_0\})\mathbf{f}(t_0,\mathbf{x}_0) \notin B_{\mathbb{R}^n}(\mathbf{0},r_0)$, then existence of a solution $\mathbf{x}$ satisfying $\mathbf{x}(t_0)=\mathbf{x}_0$ cannot be guaranteed. Thus, we assume
\begin{enumerate}
  \item [{{\rm(H$_{r}$)}}]there exists $r\in(0,r_0]$ such that
  for all $(t,\mathbf{u}) \in (\mathbb{R}^+ \cap D_g)\times B_{\mathbb{R}^n}(\mathbf{0},r)$, we have
  $$
   \mathbf{u}+\mu_g(\{t\}) \mathbf{f}(t,\mathbf{u}) \in B_{\mathbb{R}^n}(\mathbf{0},r_0),
  $$
in other terms, for all $t\in (\mathbb{R}^+ \cap D_g)$,
  $$
   B_{\mathbb{R}^n}(\mathbf{0},r)\subset\{\mathbf{u}\in B_{\mathbb{R}^n}(\mathbf{0},r_0): \mathbf{u}+\mu_g(\{t\}) \mathbf{f}(t,\mathbf{u}) \in B_{\mathbb{R}^n}(\mathbf{0},r_0) \}.
  $$
\end{enumerate}
Now, let us assume that $\mathbf{f}$ fulfills~{\rm(H$_\mathbf{f}$)}, where $\Omega$ is an open set of the form $\Omega=I\times B_{\mathbb{R}^n}(\mathbf{0},r)$ satisfying~{\rm(H$_\Omega$)} as in  Section~5.1 with $I$ a $g$-open set containing the whole $\mathbb{R}^+$.

Therefore, under hypotheses~{\rm(H$_\Omega$)},~{\rm(H$_\mathbf{f}$)}, and~{\rm(H$_{r}$)}, it follows from Theorem~\ref{thm:existence of maximal solution} that, for every $(t_0,\mathbf{x}_0)\in \mathbb{R}^+\times B_{\mathbb{R}^n}(\mathbf{0},r)$, there exists a unique maximal solution $\mathbf{x} = \mathbf{x}(\cdot,t_0,\mathbf{x}_0)\in\mathcal{S}(t_0,\mathbf{x}_0) \cap \mathcal{AC}_{g,loc}(I_{t_0,\mathbf{x}_0}, \mathbb{R}^n)$ of~\eqref{eq:stability-dyn-system} defined on a maximal interval of existence that we denote $I_{t_0,\mathbf{x}_0}$ since it depends on the initial data $(t_0,\mathbf{x}_0)$. As before, we denote $\omega(t_0,\mathbf{x}_0)=\sup I_{t_0,\mathbf{x}_0} \le \infty$.
\subsection{Lyapunov stability notions}
In this subsection, we present stability concepts within the framework of Stieltjes' differentiation. Through illustrative examples, we highlight the influence of the sets $C_g$ and $D_g$ on the change of stability properties.
\begin{defn}
  The trivial solution $\mathbf{x}=\mathbf{0}$ of the system~\eqref{eq:stability-dyn-system} is said to be
  \begin{itemize}
    \item[$\bullet$] {\it stable} if, for all $\epsilon >0$ and $t_0\in \mathbb{R}^+$, there exists $\delta=\delta(\epsilon,t_0)\in(0,r)$ such that
    $$
    \|\mathbf{x}_0\|<\delta \quad \text{ implies that }\quad \|\mathbf{x}(t,t_0,\mathbf{x}_0)\|< \epsilon \text{ for all }  t\in I_{t_0,\mathbf{x}_0};
    $$
    \item[$\bullet$] {\it uniformly stable} if, for all $\epsilon >0$, there exists $\delta=\delta(\epsilon) \in(0,r)$  such that for all $t_0\in \mathbb{R}^+$,
    $$
    \|\mathbf{x}_0\|<\delta \quad \text{ implies that }\quad \|\mathbf{x}(t,t_0,\mathbf{x}_0)\|< \epsilon \text{ for all } t\in I_{t_0,\mathbf{x}_0}.
    $$
    \end{itemize}
    \end{defn}

\begin{rem}\label{rem:stability yields global exist. of sol. starting from x_0<r}
  In the case where the trivial solution $\mathbf{x}=\mathbf{0}$ of the system~\eqref{eq:stability-dyn-system} is stable, for every $t_0\ge0$ fixed, observe that for $\epsilon=r$, there exists $\delta=\delta(\epsilon,t_0)\in (0,r)$ such that for all $\mathbf{x}_0 \in B_{\mathbb{R}^n}(\mathbf{0},\delta)$,
  $$
  \|\mathbf{x}(t,t_0,\mathbf{x}_0)\|< \epsilon \text{ for all }  t\in I_{t_0,\mathbf{x}_0}.
  $$
  Using Theorem~\ref{thm:3-alternatives of max-interval-existence} and~{(\rm H$_{\Omega}$)} and~{\rm(H$_r$)}, we deduce that $I_{t_0,\mathbf{x}_0}=[t_0,\infty)$. Furthermore, observe that if the stability of the trivial solution $\mathbf{x}=\mathbf{0}$ is uniform, then $\delta$ do not depend on $t_0$.
\end{rem}

The following definition is a notion of {\it asymptotic stability}. This concerns the behavior of solutions as $t\to \infty$.
\begin{defn}
The trivial solution $\mathbf{x}=\mathbf{0}$ of the system~\eqref{eq:stability-dyn-system} is said to be
\begin{itemize}
\item[$\bullet$] {\it asymptotically stable} if it is {\it stable} and, for every $t_0\in \mathbb{R}^+$, there exists $\delta=\delta(t_0)\in(0,r)$  such that, for all $\mathbf{x}_0 \in B_{\mathbb{R}^n}(\mathbf{0},\delta)$ and $\epsilon>0$, there exists $\sigma=\sigma(t_0,\mathbf{x}_0,\epsilon) >0$ such that
    $$
  \|\mathbf{x}(t,t_0,\mathbf{x}_0)\|< \epsilon  \text{ for all } t\in [t_0+\sigma,\infty) \cap I_{t_0,\mathbf{x}_0};
    $$

    \item[$\bullet$] {\it uniformly asymptotically stable} if it is {\it uniformly stable} and  there exists $\delta\in(0,r)$  such that, for every $\epsilon>0$, there exists $\sigma=\sigma(\epsilon)>0$ such that, for all $t_0 \in \mathbb{R}^+$ and $\mathbf{x}_0 \in B_{\mathbb{R}^n}(\mathbf{0},\delta)$,
    $$
\|\mathbf{x}(t,t_0,\mathbf{x}_0)\|<\epsilon \text{ for all } t\in [t_0+\sigma,\infty) \cap I_{t_0,\mathbf{x}_0}.
    $$
  \end{itemize}
\end{defn}

In the following example, we compare the stability properties of the trivial solution of a linear Stieltjes dynamical system to the ones in the classical case, observing the change of the stability properties depending on the sets $C_g$ and $D_g$. The resolution of linear Stieltjes differential equations has been studied in the literature, see for instance~\cite{FMarTo-OnFirstandSec,FP,ThesisFL,Mar,MarThesis}.
\begin{exm}\label{exm:changeof-stability-properties-linear-Stieltjes-dyn-system}
  Let us consider the linear Stieltjes dynamical system
\begin{equation}\label{eq:changeof-stability-properties-linear-Stieltjes-dyn-system}
\begin{aligned}
   x_g'(t) &= c x(t)\quad \text{for $g$-almost all $t\ge t_0\ge0$,}\\
   x(t_0)&=x_0\in\mathbb{R},
   \end{aligned}
\end{equation}
 for $c\in \mathbb{R}$. In the classical case of derivation where $g\equiv \operatorname{id}_\mathbb{R}$, for $c>0$, the equilibrium $x=0$ is not stable given that the solutions have the form $x_0 e^{c(t-t_0)}$ and they are not bounded on $[t_0,\infty)$.  Thus, for every $\epsilon>0$ and $t_0\ge0$, there is no $\delta>0$ such that
    $$
    |x_0|<\delta \quad \text{ implies that }\quad |x_0e^{c(t-t_0)}|< \epsilon \text{ for all } t\ge t_0.
    $$
However, for $c<0$, the equilibrium $x=0$ is asymptotically stable since stability holds and given that the solutions $x_0 e^{c(t-t_0)}\to 0$ for all $x_0 \in \mathbb{R}$. In the case where $c=0$, every constant $z \in \mathbb{R}$ is a uniformly stable equilibrium.

Now, let us reconsider the dynamical system~\eqref{eq:changeof-stability-properties-linear-Stieltjes-dyn-system}, where $g:\mathbb{R}\to\mathbb{R}$ is defined by $g(t)=t$ for $t\le 1$ and $g(t)=1$ for $t\ge 1$. Thus, the solutions of the problem~\eqref{eq:changeof-stability-properties-linear-Stieltjes-dyn-system} have the form $x_0 e^{c(g(t)-g(t_0))}$, $t\ge t_0$  for every $x_0\in\mathbb{R}$ and $t_0\ge 0$. Now, we show that the stability properties of the trivial solution $x=0$ of the system~\eqref{eq:changeof-stability-properties-linear-Stieltjes-dyn-system} differ from the classical case, for $c>0$ and $c<0$.  For $c>0$, we deduce the stability of the trivial solution $x=0$. Indeed, for all $\epsilon>0$ and $t_0\ge0$, there exists $\delta=\frac{\epsilon}{e^{c(g(1)-g(t_0))}}$ such that
$$
|x_0|<\delta \quad \text{ implies that }\quad |x_0 e^{c(g(t)-g(t_0))}|< \epsilon \text{ for all } t\ge t_0.
$$
Whereas, for $c<0$ the equilibrium $x=0$ is merely uniformly stable compared to the classical case, since for all $\epsilon>0$, there exists $\delta=\epsilon>0$ such that
$$
|x_0|<\delta \quad \text{ implies that }\quad |x_0 e^{c(g(t)-g(t_0))}|< \epsilon \text{ for all } t\ge t_0.
$$
Observe that asymptotic stability does not hold  for any $c\in \mathbb{R}$, since
$$
x_0 e^{c(g(t)-g(t_0))}\to x_0 e^{c(g(1)-g(t_0))}\nrightarrow 0,
$$
as $t\to \infty$ for all $t_0\ge0$ and $x_0 \in \mathbb{R}^*$.

Next, we change slightly the dynamical system~\eqref{eq:changeof-stability-properties-linear-Stieltjes-dyn-system}, to incorporate jumps. Let us consider the derivator $g_1(t)=t+\sum_{n\in\mathbb{N}} \chi_{[n,\infty)}(t)$ for all $t\in \mathbb{R}$, and  reconsider the dynamical system with $g_1$ instead:
 \begin{equation}\label{eq:Dg2-changeof-stability-properties-linear-Stieltjes-dyn-system}
 \begin{aligned}
   x_{g_1}'(t) &= \begin{dcases}
                c x(t) & \mbox{if } t\ge t_0\ge0,\text{ with } t\notin D_{g_1}=\mathbb{N}, \\
                \nu x(t),& \mbox{if } t\ge t_0\ge0,\text{ with } t\in D_{g_1},
              \end{dcases}
   \end{aligned}
\end{equation}
where $c,\nu\in \mathbb{R}$ with $\nu\in(-1,\infty)\setminus\{0\}$. Again the solution of~\eqref{eq:Dg2-changeof-stability-properties-linear-Stieltjes-dyn-system} satisfying $x(t_0)=x_0$ has the form  $x(t)=x_0 e_{c_*}(t,t_0)=x_0e^{\int_{[t_0,t)}c_*(s) \,d\mu_{g_1}(s)}$, where
$$c_*(t)=\begin{dcases}
                                    c & \mbox{if }  t\in[t_0,\infty)\setminus D_{g_1}, \\
                                   \log(1+\nu) & \mbox{if }  t\in[t_0,\infty)\cap D_{g_1}.
                                  \end{dcases}
$$
In Figure~\ref{fig:stability-Change-with-Dg}, we can observe different patterns depending on the values of $c$ and~$\nu$. This implies that the presence of discontinuities can both destabilize and restore the stability properties of a dynamical system.
\begin{figure}[H]
    \begin{subfigure}{.49\linewidth}
        \includegraphics[width=\linewidth]{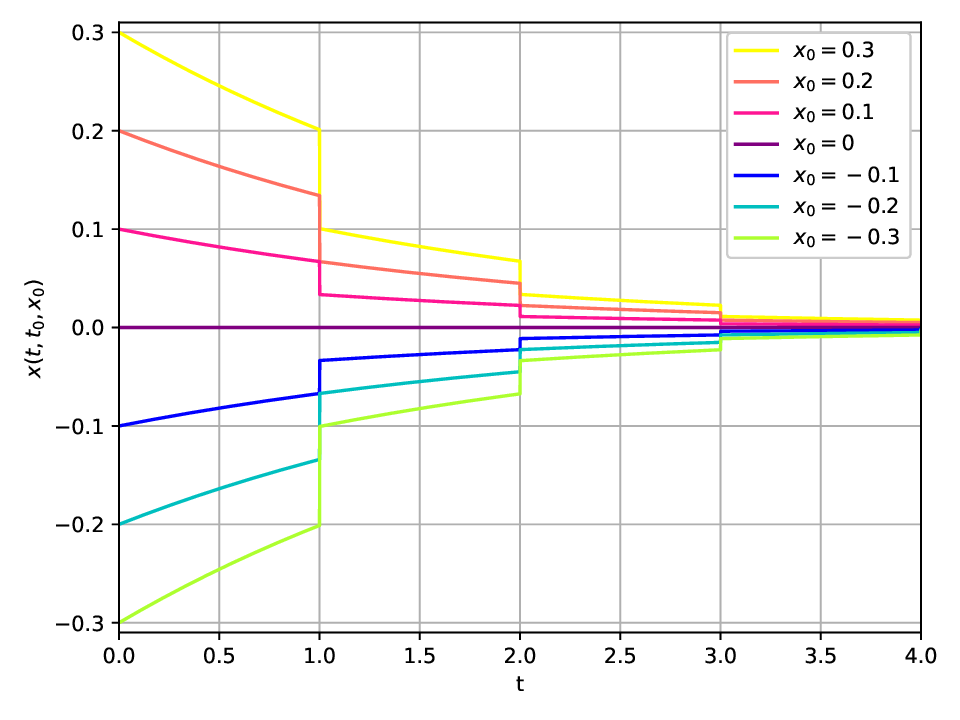}
        \caption{The case of $c=-0.4$, and $\nu=-0.5$.}
        %\label{MLEDdet}
    \end{subfigure}\hfill
    \begin{subfigure}{.49\linewidth}
        \includegraphics[width=\linewidth]{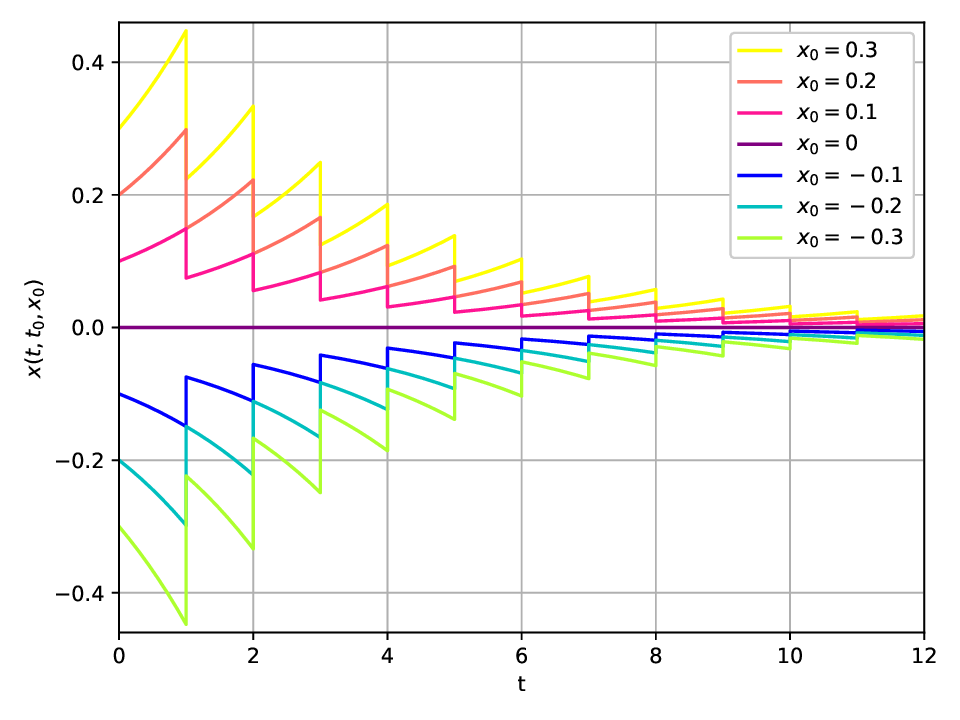}
        \caption{The case of $c=0.4$, and $\nu=-0.5$.}
        %\label{energydetPSK}
    \end{subfigure}
        \begin{subfigure}{.49\linewidth}
        \includegraphics[width=\linewidth]{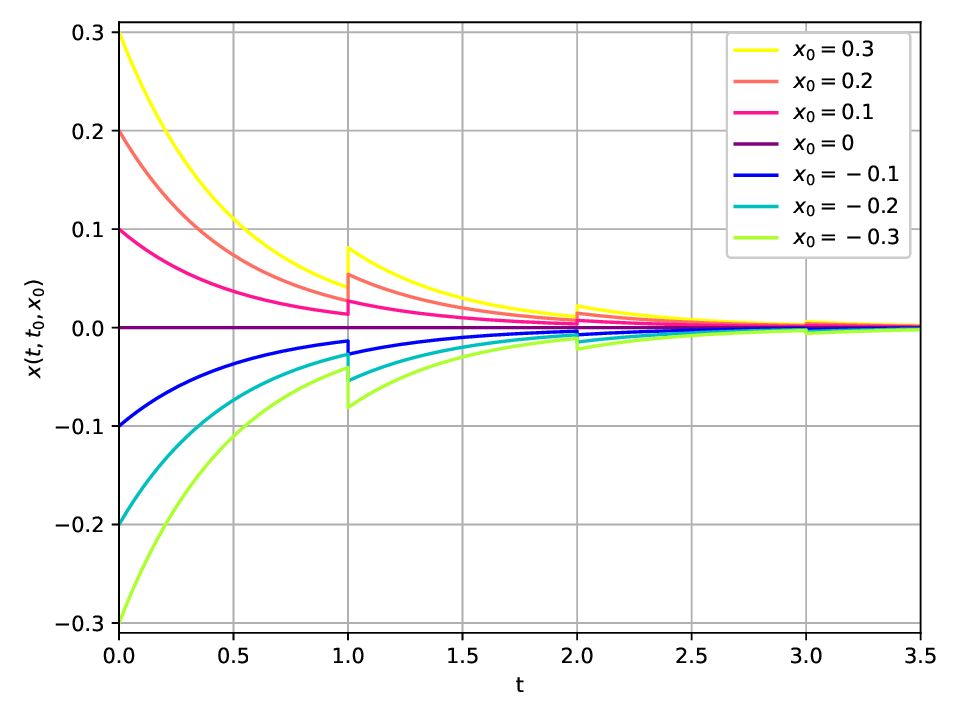}
        \caption{The case of $c=-2$, and $\nu=1$.}
       % \label{velcomp}
    \end{subfigure}\hfill
    \begin{subfigure}{.49\linewidth}
        \includegraphics[width=\linewidth]{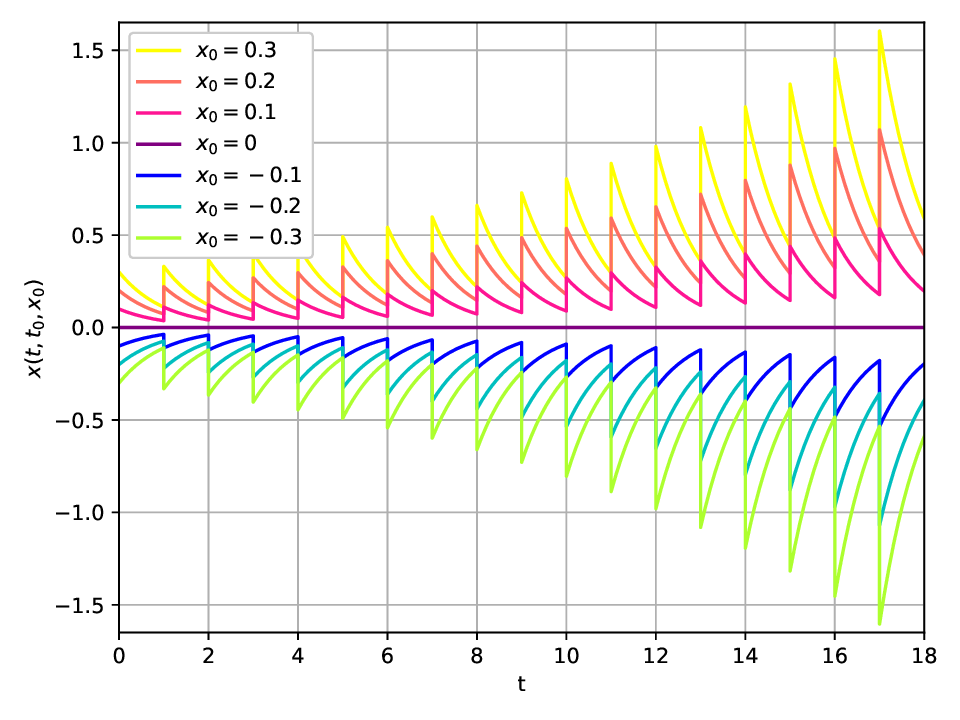}
        \caption{The case of $c=-1$, and $\nu=2$.}
       % \label{estcomp}
    \end{subfigure}
        \caption{Behaviour of solutions in a neighborhood of $x=0$. }
    \label{fig:stability-Change-with-Dg}
\end{figure}
\end{exm}
\subsection{Stability results based on Lyapunov's function}
In the classical case where $g\equiv \operatorname{id}_\mathbb{R}$, Lyapunov's method is used to study the behavior of a trajectory of the system in a neighborhood of the trivial solution $\mathbf{x}=\mathbf{0}$, by means of a function $V$ depending on time and state; known as a {\it Lyapunov  function}. This function can be understood as an energy representation of the system~\eqref{eq:stability-dyn-system}, since in numerous applications, the function considered is the total energy of the system~\eqref{eq:stability-dyn-system} through time, see for instance~\cite{BL} for an example of an energy-based Lyapunov function for physical systems. In our context, the derivator $g$ takes into account the relevance of each moment during the process by means of the changes of the slopes of $g$ accordingly. Put differently, $g$ amplifies an alternative measurement for time, which may differ from the linear timeline typically used in the classical case where $g\equiv \operatorname{id}_\mathbb{R}$, see for instance the works~\cite{PM,PM2,PM3,PMM,MEF1,MEF2} where $g$ represents the life cycle of some populations, also we refer to~\cite{AFNT,FP} for more applications. Nevertheless, in the context of Stieltjes differentiation,   we still can rely on Lyapunov's function, particularly,  based on the $g$-derivative of its composition with maximal solutions of the system~\eqref{eq:stability-dyn-system} under consideration. This $g$-derivative of the composition will then permit a better  understanding of how the energy of the system~\eqref{eq:stability-dyn-system} changes, but with respect to this new observed time described by $g$. More precisely, it will describe how the energy of the system varies in response to the variation of this new "curved" scale of time.

Based on the definition of the Stieltjes derivative in Definition~\ref{df:g derivative}, we define the partial Stieltjes derivative as follows.
\begin{defn}\label{df:partial g-derivative}
Given a function $V:\mathbb{R}^+\times B_{\mathbb{R}^n}(\mathbf{0},r_0)\to \mathbb{R}$ and $t, x_1,\dots,x_n$ its arguments. The partial $g$-derivative of $V$ with respect to  the first argument at a point $(t,\mathbf{x})\in(\mathbb{R}^+\setminus C_g)\times  B_{\mathbb{R}^n}(\mathbf{0},r_0)$ is defined as:
$$
\frac{\partial V}{\partial_g t}(t,\mathbf{x})=
\begin{dcases}
\lim\limits_{s\to t} \frac{V(s,\mathbf{x})-V(t,\mathbf{x})}{g(s)-g(t)}, &\text{if } t \in \mathbb{R}^+ \setminus D_g,  \\
\frac{V(t^+,\mathbf{x})-V(t,\mathbf{x})}{g(t^+)-g(t)}, &\text{if } t \in \mathbb{R}^+ \cap D_g,
\end{dcases}
$$
provided that the limits exist.
\end{defn}

Combining Proposition~\ref{prop:g derivative of composition} and Definition~\ref{df:partial g-derivative}, we obtain the following technical proposition. The proof involves a formula related to the $g$-derivative of the composition involving a function with two variables. Formulae of this fashion were stated without proof in~\cite[Lemma~11]{SS} for $t\notin D_g$. In the next proposition, we derive formulae in the case where $D_g$ and $N_g$ are discrete.
\begin{prop}\label{prop:total g-derivative}
Let $g: \mathbb{R} \to \mathbb{R}$ be a left-continuous and nondecreasing function such that $D_g$ and $N_g$ are discrete. Given a function $V:\mathbb{R}^+\times B_{\mathbb{R}^n}(\mathbf{0},r_0) \to \mathbb{R}$ satisfying the following assumptions:
\begin{enumerate}[$(1)$]
  \item $V(\cdot,\mathbf{u})$ is $g$-differentiable on $\mathbb{R}^+\setminus (D_g \cup C_g)$  for all $\mathbf{u}\in B_{\mathbb{R}^n}(\mathbf{0},r_0)$;
  \item $V(t,\cdot) \in C^1(B_{\mathbb{R}^n}(\mathbf{0},r_0),\mathbb{R})$ for all $t\ge 0$;
  \item $\frac{\partial V}{\partial_g t}(\cdot,\mathbf{u})$ is continuous on $\mathbb{R}^+\setminus (D_g \cup C_g)$ for all $\mathbf{u}\in B_{\mathbb{R}^n}(\mathbf{0},r_0)$;
  \item for $(t_0,\mathbf{x}_0)\in \mathbb{R}^+\times  B_{\mathbb{R}^n}(\mathbf{0},r)$, $V(\cdot,\mathbf{x}(\cdot))\in\mathcal{AC}_{g,loc}(I_{\mathbf{x}},\mathbb{R})$ for every solution $\mathbf{x} :I_{\mathbf{x}} \to B_{\mathbb{R}^n}(\mathbf{0},r_0)$ of the system~\eqref{eq:stability-dyn-system}.
\end{enumerate}
Then, for all $[a,b]\subset I_{\mathbf{x}}$, we obtain for $g$-almost all $t\in [a,b]\setminus (D_g \cup C_g)$ that
\begin{equation}\label{eq:total g-derivative of V w.r. to system}% out D_g}
V_g'(t,\mathbf{x}(t))=
 \frac{\partial V}{\partial_g t}(t,\mathbf{x}(t))+ \sum_{i=1}^{n}\frac{\partial V}{\partial x_i}(t,\mathbf{x}(t)) f_i(t,\mathbf{x}(t)).
\end{equation}
Moreover, if $t\in [a,b)\cap D_g$, then
 \begin{equation}\label{eq:total g-derivative of V w.r. to system in D_g}
V_g'(t,\mathbf{x}(t))=\frac{V(t^+,\mathbf{x}(t)+ \mu_g(\{t\})\mathbf{f}(t,\mathbf{x}(t)))-V(t,\mathbf{x}(t))}{g(t^+)-g(t)}.
\end{equation}
\end{prop}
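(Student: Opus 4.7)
The plan is to split on whether $t$ is a jump point of $g$ and handle the two formulas separately, since~\eqref{eq:total g-derivative of V w.r. to system in D_g} is essentially the definition of the Stieltjes derivative applied to $h(t):=V(t,\mathbf{x}(t))$, while~\eqref{eq:total g-derivative of V w.r. to system} requires a genuine Mean Value argument.

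For $t\in[a,b)\cap D_g$, hypothesis~(4) makes $h$ $g$-absolutely continuous, so by Definition~\ref{df:g derivative} the quantity $V_g'(t,\mathbf{x}(t))=h_g'(t)$ equals $(h(t^+)-h(t))/(g(t^+)-g(t))$. Because $\mathbf{x}$ solves~\eqref{eq:stability-dyn-system}, the same definition applied to $\mathbf{x}$ forces the jump condition $\mathbf{x}(t^+)=\mathbf{x}(t)+\mu_g(\{t\})\mathbf{f}(t,\mathbf{x}(t))$. A short argument (passing to the right-limit in $V(s,\mathbf{x}(s))$ and using the existence of $V(t^+,\cdot)$ implicit in Definition~\ref{df:partial g-derivative} together with the $C^1$ regularity of $V(t,\cdot)$ to control $V(s,\mathbf{x}(s))-V(s,\mathbf{x}(t^+))$) then identifies $h(t^+)$ with $V(t^+,\mathbf{x}(t)+\mu_g(\{t\})\mathbf{f}(t,\mathbf{x}(t)))$, yielding~\eqref{eq:total g-derivative of V w.r. to system in D_g}.

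For $g$-almost every $t\in[a,b]\setminus(D_g\cup C_g)$ I would restrict to the full $\mu_g$-measure set on which $\mathbf{x}$ is $g$-differentiable and $t\notin N_g$; since $D_g$ and $N_g$ are discrete, each such $t$ has an open neighborhood $(t-\eta,t+\eta)$ on which $g$ is continuous and strictly increasing, so Corollary~\ref{cor:g-MeanValueTheorem} is applicable. Write
\begin{equation*}
\frac{V(s,\mathbf{x}(s))-V(t,\mathbf{x}(t))}{g(s)-g(t)}=\underbrace{\frac{V(s,\mathbf{x}(s))-V(t,\mathbf{x}(s))}{g(s)-g(t)}}_{A(s)}+\underbrace{\frac{V(t,\mathbf{x}(s))-V(t,\mathbf{x}(t))}{g(s)-g(t)}}_{B(s)}.
\end{equation*}
For $A(s)$, apply Corollary~\ref{cor:g-MeanValueTheorem} to $V(\cdot,\mathbf{x}(s))$ (which is $g$-differentiable on $(t-\eta,t+\eta)$ by~(1) and $g$-continuous there since $g$ is continuous), obtaining $\xi=\xi(s)$ between $s$ and $t$ with $A(s)=\frac{\partial V}{\partial_g t}(\xi,\mathbf{x}(s))$. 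As $s\to t$, $\xi\to t$, and $\mathbf{x}(s)\to\mathbf{x}(t)$ by Proposition~\ref{prop:g-conti funct constant where g constant} since $t\notin D_g$. For $B(s)$, use the classical first-order Taylor expansion from hypothesis~(2):
\begin{equation*}
V(t,\mathbf{x}(s))-V(t,\mathbf{x}(t))=\sum_{i=1}^{n}\frac{\partial V}{\partial x_i}(t,\mathbf{x}(t))\bigl(x_i(s)-x_i(t)\bigr)+o(\|\mathbf{x}(s)-\mathbf{x}(t)\|),
\end{equation*}
then divide by $g(s)-g(t)$; since $\frac{x_i(s)-x_i(t)}{g(s)-g(t)}\to f_i(t,\mathbf{x}(t))$ and $\|\mathbf{x}(s)-\mathbf{x}(t)\|/|g(s)-g(t)|$ stays bounded near its limit $\|\mathbf{f}(t,\mathbf{x}(t))\|$, the remainder is negligible and $B(s)\to\sum_{i=1}^{n}\frac{\partial V}{\partial x_i}(t,\mathbf{x}(t))f_i(t,\mathbf{x}(t))$. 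Summing the limits of $A$ and $B$ produces~\eqref{eq:total g-derivative of V w.r. to system}.

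The subtle point I expect to be the main obstacle is passing to the limit in $A(s)=\frac{\partial V}{\partial_g t}(\xi(s),\mathbf{x}(s))$: hypothesis~(3) provides continuity of $\frac{\partial V}{\partial_g t}$ only in the first variable with the second fixed, whereas both arguments move simultaneously. I would bridge this by inserting $\frac{\partial V}{\partial_g t}(\xi(s),\mathbf{x}(t))$ and bounding the residual $|\frac{\partial V}{\partial_g t}(\xi(s),\mathbf{x}(s))-\frac{\partial V}{\partial_g t}(\xi(s),\mathbf{x}(t))|$ through the $C^1$ dependence of $V$ in its spatial argument combined with a local boundedness of $\frac{\partial V}{\partial_g t}$ near $(t,\mathbf{x}(t))$ that is a mild but necessary consequence of~(2)--(3); this is the only place where anything beyond direct substitution into the definitions is used.
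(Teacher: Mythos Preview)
Your argument is essentially the paper's: the same decomposition into a time-increment $A(s)$ and a space-increment $B(s)$, the same use of Corollary~\ref{cor:g-MeanValueTheorem} on $V(\cdot,\mathbf{x}(s))$ to rewrite $A(s)$ as $\frac{\partial V}{\partial_g t}(\xi,\mathbf{x}(s))$, and the same direct definition-chasing for the $D_g$ case. The only real difference is in $B(s)$: the paper uses a coordinate-by-coordinate telescoping sum to isolate the partial derivatives, whereas you invoke the first-order Taylor expansion afforded by hypothesis~(2); both are standard and equally valid here. You are in fact more careful than the paper on the limit in $A(s)$: the paper simply writes ``using Condition~(3) we obtain $\frac{\partial V}{\partial_g t}(c,\mathbf{x}(s))\to\frac{\partial V}{\partial_g t}(t,\mathbf{x}(t))$'' without addressing that both arguments move, so the joint-continuity issue you flag is a genuine gap in the paper's presentation rather than an obstacle peculiar to your route.
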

\begin{proof}
For $(t_0,\mathbf{x}_0)\in \mathbb{R}^+ \times B_{\mathbb{R}^n}(\mathbf{0},r)$, let us consider  a solution $\mathbf{x} :I_{\mathbf{x}} \to B_{\mathbb{R}^n}(\mathbf{0},r_0)$ of the system~\eqref{eq:stability-dyn-system}. Thus, the composition $V(\cdot,\mathbf{x}(\cdot))\in\mathcal{AC}_{g}(I_{\mathbf{x}},\mathbb{R})$. Let $[a,b]\subset I_{\mathbf{x}}$. For $g$-almost every $t \in [a,b] \setminus (D_g \cup C_g)$:
\begin{align*}
V_g'(t,\mathbf{x}(t))&= \lim\limits_{s\to t} \frac{V(s,\mathbf{x}(s))-V(t,\mathbf{x}(t))}{g(s)-g(t)} \\
 & =  \lim\limits_{s\to t} \frac{V(s,\mathbf{x}(s))-V(t,\mathbf{x}(s))}{g(s)-g(t)}+\frac{V(t,\mathbf{x}(s))-V(t,\mathbf{x}(t))}{g(s)-g(t)}\\
  & =  \lim\limits_{s\to t} \frac{V(s,\mathbf{x}(s))-V(t,\mathbf{x}(s))}{g(s)-g(t)}\\
  &\quad+\sum_{i=1}^{n}\Big( \frac{V(t,(x_1(t),\dots,x_{i-1}(t),x_i(s),\dots,x_n(s)))}{g(s)-g(t)}\\
&\qquad\quad-\frac{V(t,(x_1(t),\dots,x_i(t),x_{i+1}(s),\dots,x_n(s)))}{g(s)-g(t)}\Big).
\end{align*}
For $s$ sufficiently close to $t$, and since $D_g$ and $N_g$ are discrete, $g$ is continuous and increasing on the interval with endpoint points $s$ and $t$. By applying Corollary~\ref{cor:g-MeanValueTheorem} to the function $V(\cdot,\mathbf{x}(s))$, we obtain that there exists $c$ between $s$ and $t$ such that
$$
\frac{V(s,\mathbf{x}(s))-V(t,\mathbf{x}(s))}{g(s)-g(t)}= \frac{\partial V}{\partial_g t}(c,\mathbf{x}(s)).
$$
As $s\to t$, $c\to t$, and using Condition~(3) we obtain
$$
\frac{V(s,\mathbf{x}(s))-V(t,\mathbf{x}(s))}{g(s)-g(t)}= \frac{\partial V}{\partial_g t}(c,\mathbf{x}(s))\to \frac{\partial V}{\partial_g t}(t,\mathbf{x}(t)).
$$
Therefore,
\begin{align*}
V_g'(t,\mathbf{x}(t)) & =  \frac{\partial V}{\partial_g t}(t,\mathbf{x}(t))+ \sum_{i=1}^{n}\frac{\partial V}{\partial x_i}(t,\mathbf{x}(t))(x_i)_g'(t)\\
  &=  \frac{\partial V}{\partial_g t}(t,\mathbf{x}(t))+ \sum_{i=1}^{n}\frac{\partial V}{\partial x_i}(t,\mathbf{x}(t))f_i(t,\mathbf{x}(t)).
\end{align*}
For $t \in[a,b)\cap D_g$, we obtain immediately that
\begin{align*}
V_g'(t,\mathbf{x}(t))&=\frac{V(t^+,\mathbf{x}(t^+))-V(t,\mathbf{x}(t))}{g(t^+)-g(t)}\\
  &=\frac{V(t^+,\mathbf{x}(t)+\mu_g(\{t\})\mathbf{f}(t,\mathbf{x}(t)))-V(t,\mathbf{x}(t))}{g(t^+)-g(t)}.
 \end{align*}
\end{proof}

In order to establish sufficient conditions for different types of stability of the trivial solution $\mathbf{x}=\mathbf{0}$ of the system~\eqref{eq:stability-dyn-system} as defined in Subsection~4.1, we introduce specific sets of functions.
\begin{defn}\label{df:class V^1}
A function $V:\mathbb{R}^+\times B_{\mathbb{R}^n}(\mathbf{0},r_0) \to \mathbb{R}$ is said to belong  to class $\mathcal{V}^g_1$ if it satisfies the following conditions:
\begin{enumerate}
\item $V(t,\cdot)$ is continuous for all $t\ge 0$;
\item $V(\cdot,\mathbf{x}(\cdot))\in\mathcal{AC}_{g,loc}(I_{t_0,\mathbf{x}_0},\mathbb{R})$ for every function
$\mathbf{x}:I_{t_0,\mathbf{x}_0}\to B_{\mathbb{R}^n}(\mathbf{0},r_0)$ of $\mathcal{AC}_{g,loc}(I_{t_0,\mathbf{x}_0},\mathbb{R}^n)$ maximal solution of the system~\eqref{eq:stability-dyn-system};
\item $V(t,\mathbf{0})=0$ for all $t\ge0$.
\end{enumerate}
\end{defn}
\begin{defn}
 A  function $\varphi:\mathbb{R}^+\to \mathbb{R}^+$ belongs to the class $\mathcal{K}$ if it fulfills the following assumptions:
  \begin{enumerate}
   \item $\varphi$ is continuous;
   \item $\varphi(0)=0$;
   \item $\varphi$ is increasing.
  \end{enumerate}
\end{defn}
Now, we state the first stability result.
\begin{thm}\label{thm:stability+uniform stability}
Assume that Conditions~{\rm(H$_{\Omega}$)},~{\rm(H$_{\mathbf{f}}$)} and~{\rm(H$_r$)} hold.
 If there  exist functions $V \in \mathcal{V}_1^g$ and $a \in \mathcal{K}$ such that
 \begin{enumerate}
\item[{{\rm(a)}}] $a(\|\mathbf{u}\|)\le V(t,\mathbf{u})$, for all $(t,\mathbf{u})\in \mathbb{R}^+\times B_{\mathbb{R}^n}(\mathbf{0},r_0)$;
\item[{{\rm(b)}}] for every $(t_0,\mathbf{x}_0) \in \mathbb{R}^+\times B_{\mathbb{R}^n}(\mathbf{0},r)$, if $\mathbf{x}:I_{t_0,\mathbf{x}_0}\to B_{\mathbb{R}^n}(\mathbf{0},r_0)$ is a maximal solution of the system~\eqref{eq:stability-dyn-system}, then $V_g'(t,\mathbf{x}(t)) \le 0$ for $g$-almost all $t\in I_{t_0,\mathbf{x}_0}$.
\end{enumerate}
Then, the trivial solution of the system~\eqref{eq:stability-dyn-system} is
 \begin{enumerate}
\item[{{\rm(i)}}]{\it stable}.
\item[{{\rm(ii)}}]{\it uniformly stable} if there exists $b \in \mathcal{K}$ such that
    \begin{equation}\label{eq:V is decrescent}
    V(t,\mathbf{u})\le b(\|\mathbf{u}\|), \quad \text{ for all } (t,\mathbf{u})\in \mathbb{R}^+\times B_{\mathbb{R}^n}(\mathbf{0},r_0).
    \end{equation}
    \end{enumerate}
\end{thm}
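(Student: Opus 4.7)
The plan is to follow the classical Lyapunov template, adapted to the Stieltjes setting by using the Fundamental Theorem of Calculus for the Lebesgue--Stieltjes integral (Theorem~\ref{th:Fund-Th}). I prove (i) first, and then (ii) by the same argument but with a $t_0$-independent choice of $\delta$.

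\textbf{Stability.} Fix $\epsilon>0$ and $t_0\ge 0$. Since every maximal solution takes values in $B_{\mathbb{R}^n}(\mathbf{0},r_0)$ and $r\le r_0$, I may assume $\epsilon\in(0,r)$ without loss of generality. Because $a\in\mathcal{K}$ is continuous and strictly increasing with $a(0)=0$, we have $a(\epsilon)>0$. Since $V\in\mathcal{V}_1^g$, the map $V(t_0,\cdot)$ is continuous and $V(t_0,\mathbf{0})=0$, so I can choose $\delta=\delta(\epsilon,t_0)\in(0,r)$ such that
$$\|\mathbf{u}\|<\delta \ \Longrightarrow\ V(t_0,\mathbf{u})<a(\epsilon).$$
For any $\mathbf{x}_0$ with $\|\mathbf{x}_0\|<\delta$, let $\mathbf{x}=\mathbf{x}(\cdot,t_0,\mathbf{x}_0)$ be the (unique) maximal solution of~\eqref{eq:stability-dyn-system}.

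\textbf{Key step: monotone decrease of $V$ along the solution.} By Definition~\ref{df:class V^1}(2), $V(\cdot,\mathbf{x}(\cdot))\in\mathcal{AC}_{g,loc}(I_{t_0,\mathbf{x}_0},\mathbb{R})$. Applying the Fundamental Theorem of Calculus for the Lebesgue--Stieltjes integral on any $[t_0,t]\subset I_{t_0,\mathbf{x}_0}$, together with hypothesis~(b), I get
$$V(t,\mathbf{x}(t))=V(t_0,\mathbf{x}_0)+\int_{[t_0,t)}V_g'(s,\mathbf{x}(s))\,d\mu_g(s)\le V(t_0,\mathbf{x}_0)<a(\epsilon).$$
Combining this with hypothesis~(a), we deduce $a(\|\mathbf{x}(t)\|)\le V(t,\mathbf{x}(t))<a(\epsilon)$, and since $a$ is strictly increasing, $\|\mathbf{x}(t)\|<\epsilon$ for every $t\in I_{t_0,\mathbf{x}_0}$. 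This is exactly the definition of stability.

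\textbf{Uniform stability.} Suppose additionally that $V(t,\mathbf{u})\le b(\|\mathbf{u}\|)$ for some $b\in\mathcal{K}$. Fix $\epsilon\in(0,r)$. Since $b$ is continuous, increasing with $b(0)=0$ and $a(\epsilon)>0$, I can choose $\delta=\delta(\epsilon)\in(0,r)$ with $b(\delta)<a(\epsilon)$. This $\delta$ is independent of $t_0$. For any $t_0\ge 0$ and any $\|\mathbf{x}_0\|<\delta$, the decrescence bound gives $V(t_0,\mathbf{x}_0)\le b(\|\mathbf{x}_0\|)\le b(\delta)<a(\epsilon)$, so the same monotonicity argument as in~(i) produces $\|\mathbf{x}(t,t_0,\mathbf{x}_0)\|<\epsilon$ on $I_{t_0,\mathbf{x}_0}$.

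The argument is not technically hard, but the only delicate point is the \textbf{key step}: the translation of the pointwise $g$-almost everywhere inequality $V_g'(t,\mathbf{x}(t))\le 0$ into the pathwise monotone bound $V(t,\mathbf{x}(t))\le V(t_0,\mathbf{x}_0)$. This is exactly where the $g$-absolute continuity built into class $\mathcal{V}_1^g$ matters, because only then does Theorem~\ref{th:Fund-Th} apply to $V(\cdot,\mathbf{x}(\cdot))$, and the jump contributions at points of $D_g$ are correctly absorbed into the Lebesgue--Stieltjes integral $\int_{[t_0,t)}V_g'(s,\mathbf{x}(s))\,d\mu_g(s)$. Note that I do not need to invoke Remark~\ref{rem:stability yields global exist. of sol. starting from x_0<r}; it is enough to bound $\|\mathbf{x}(t)\|$ on $I_{t_0,\mathbf{x}_0}$, and the global existence is a corollary.
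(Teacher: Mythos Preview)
Your proof is correct and follows essentially the same approach as the paper's: choose $\delta$ by continuity of $V(t_0,\cdot)$ at $\mathbf{0}$ (or via $b(\delta)<a(\epsilon)$ for uniform stability), then use $V_g'\le 0$ to get $V(t,\mathbf{x}(t))\le V(t_0,\mathbf{x}_0)$ and conclude via the lower bound $a(\|\mathbf{x}(t)\|)\le V(t,\mathbf{x}(t))$. The only difference is cosmetic: you spell out the integration step via Theorem~\ref{th:Fund-Th}, whereas the paper simply writes $V(t,\mathbf{x}(t))\le V(t_0,\mathbf{x}_0)$ as an immediate consequence of Condition~(b).
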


\begin{proof}
{{\rm(i)}}\quad Since $V\in \mathcal{V}_1^g$, then, for all $\epsilon>0$ and $t_0\in \mathbb{R}^+ $, there exists $\delta=\delta(\epsilon, t_0) \in(0,r)$ such that
      $$
      \sup_{\|\mathbf{u}\|<\delta}V(t_0,\mathbf{u})<a(\epsilon).
      $$
Let $\mathbf{x}:I_{t_0,\mathbf{x}_0}\to B_{\mathbb{R}^n}(\mathbf{0},r_0)$ be a maximal solution of the system~\eqref{eq:stability-dyn-system} and $\|\mathbf{x}_0\|<\delta$. It follows from Conditions~{{\rm(a)}} and~{{\rm(b)}}  that
$$
a(\|\mathbf{x}(t)\|)\le V(t,\mathbf{x}(t)) \le V(t_0,\mathbf{x}_0) < a(\epsilon).
$$
Thus, for all $t\in I_{t_0,\mathbf{x}_0}$, we have that
$$
\|\mathbf{x}(t)\|=\|\mathbf{x}(t,t_0,\mathbf{x}_0)\| < \epsilon.
$$
 Therefore, the trivial solution $\mathbf{x}=\mathbf{0}$ is stable.\medskip

{{\rm(ii)}}\quad  Arguing as in~{{\rm(i)}}, we can choose a $\delta=\delta(\epsilon) \in(0,r)$ independent of $t_0$ such that
$$
 b(\delta)<a(\epsilon).
$$
Thus, using~\eqref{eq:V is decrescent}, we obtain for all $t\in I_{t_0,\mathbf{x}_0}$,
$$
a(\|\mathbf{x}(t)\|)\le V(t,\mathbf{x}(t)) \le V(t_0,\mathbf{x}_0)\le b(\|\mathbf{x}_0\|)< b(\delta)<a(\epsilon).
$$
This yields that the trivial solution $\mathbf{x}=\mathbf{0}$ is uniformly stable.
\end{proof}
In the next theorem, we impose additional assumptions which will permit to insure the asymptotical stability of the trivial solution $\mathbf{x}=\mathbf{0}$ to the system~\eqref{eq:stability-dyn-system}.

\begin{thm}\label{thm:asymptotic stability}
Assume that Conditions~{\rm(H$_{\Omega}$)},~{\rm(H$_{\mathbf{f}}$)} and~{\rm(H$_r$)} hold. Let $V \in \mathcal{V}_1^g$, $a, b \in \mathcal{K}$, $\phi : \mathbb{R}^+ \to \mathbb{R}^+$ continuous, and  a $g$-measurable function $w:\mathbb{R}^+\to \mathbb{R}^+$ be such that
 \begin{enumerate}
\item[{{\rm(a)}}] $a(\|\mathbf{u}\|) \le V(t,\mathbf{u})$ for every $(t,\mathbf{u})\in\mathbb{R}^+\times B_{\mathbb{R}^n}(\mathbf{0},r_0)$;
\item[{{\rm(b)}}] $\phi(s)=0$ if and only if $s=0$;
\item[{{\rm(c)}}] for every $(t_0,\mathbf{x}_0) \in \mathbb{R}^+\times B_{\mathbb{R}^n}(\mathbf{0},r)$, the maximal solution $\mathbf{x}:I_{t_0,\mathbf{x}_0}\to B_{\mathbb{R}^n}(\mathbf{0},r_0)$ of the system~\eqref{eq:stability-dyn-system} satisfies
 $$
 V_g'(t,\mathbf{x}(t)) \le -w(t)\phi(\|\mathbf{x}(t)\|) \quad \text{for $g$-almost all } t\in I_{t_0,\mathbf{x}_0};
 $$
 \item[{{\rm(d)}}] $\inf_{t_0 \in \mathbb{R}^+}\lim_{t\to +\infty} \int_{[t_0,t_0+t)} w(s)\,d\mu_g(s) =\infty$.
 \end{enumerate}
If the trivial solution $\mathbf{x}=\mathbf{0}$ of the system~\eqref{eq:stability-dyn-system} is uniformly stable, then $\mathbf{x}=\mathbf{0}$ is {\rm asymptotically stable}.
\end{thm}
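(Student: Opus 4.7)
The plan is to integrate the pointwise $g$-derivative inequality in Condition~(c) along a fixed trajectory, extract a finite bound on a weighted integral of $\phi(\|\mathbf{x}(\cdot)\|)$, and then force a contradiction with Condition~(d) unless the trajectory tends to $\mathbf{0}$. First, I would fix $t_0\in\mathbb{R}^+$ and $\mathbf{x}_0\in B_{\mathbb{R}^n}(\mathbf{0},\delta)$, where $\delta=\delta(r)\in(0,r)$ is the radius furnished by the (assumed) uniform stability of the trivial solution applied with $\epsilon=r$. By Remark~\ref{rem:stability yields global exist. of sol. starting from x_0<r}, the maximal solution $\mathbf{x}=\mathbf{x}(\cdot,t_0,\mathbf{x}_0)$ is then defined on the entire half-line $[t_0,\infty)$ and satisfies $\|\mathbf{x}(t)\|<r$ for every $t\ge t_0$.

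Since $V\in\mathcal{V}_1^g$, the map $t\mapsto V(t,\mathbf{x}(t))$ is $g$-absolutely continuous on every compact subinterval of $[t_0,\infty)$, so Theorem~\ref{th:Fund-Th}, together with~(c) and the non-negativity $V\ge 0$ coming from~(a), yields
\[
\int_{[t_0,t)}w(s)\,\phi(\|\mathbf{x}(s)\|)\,d\mu_g(s)\le V(t_0,\mathbf{x}_0)-V(t,\mathbf{x}(t))\le V(t_0,\mathbf{x}_0)
\]
for every $t\ge t_0$. The crucial point is that the left-hand side stays bounded uniformly in $t$ by the single finite constant $V(t_0,\mathbf{x}_0)$.

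Now suppose, toward a contradiction, that $\mathbf{x}(t)\not\to\mathbf{0}$. Then there exist $\eta\in(0,r)$ and a sequence $t_n\nearrow\infty$ with $\|\mathbf{x}(t_n)\|\ge\eta$. Applying uniform stability with $\epsilon=\eta$ furnishes $\delta_\eta\in(0,r)$, and I claim that $\|\mathbf{x}(s)\|\ge\delta_\eta$ for every $s\ge t_0$: if instead $\|\mathbf{x}(s)\|<\delta_\eta$ for some such $s$, then, by the uniqueness of the maximal solution (Theorems~\ref{thm:prolongations coincide on common interval} and~\ref{thm:existence of maximal solution}), $\mathbf{x}(\cdot)$ coincides on $[s,\infty)$ with the maximal solution of~\eqref{eq:stability-dyn-system} issued from $(s,\mathbf{x}(s))$, and uniform stability then forces $\|\mathbf{x}(t)\|<\eta$ for every $t\ge s$, contradicting $\|\mathbf{x}(t_n)\|\ge\eta$ for $t_n\ge s$. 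By continuity of $\phi$ and Condition~(b), $m:=\min_{u\in[\delta_\eta,r_0]}\phi(u)>0$, hence
\[
\int_{[t_0,t)}w(s)\,\phi(\|\mathbf{x}(s)\|)\,d\mu_g(s)\ge m\int_{[t_0,t)}w(s)\,d\mu_g(s),
\]
and the right-hand side tends to $+\infty$ by Condition~(d), contradicting the uniform bound obtained above. Consequently $\mathbf{x}(t,t_0,\mathbf{x}_0)\to\mathbf{0}$, which supplies the required $\sigma(t_0,\mathbf{x}_0,\epsilon)$ for each $\epsilon>0$ and establishes asymptotic stability.

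The step I expect to be the most delicate is the backward propagation of the positive lower bound $\|\mathbf{x}(s)\|\ge\delta_\eta$: it hinges on re-interpreting the trajectory $\mathbf{x}(\cdot,t_0,\mathbf{x}_0)$ as the maximal solution issued from every intermediate time $s$, which is where uniqueness, together with~{\rm(H$_r$)} ensuring well-posedness at points of $D_g$, plays a crucial role. Once this lower bound is in hand, the rest reduces to standard estimates combined with Condition~(d).
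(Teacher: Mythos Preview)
Your proof is correct and follows essentially the same strategy as the paper's: both combine the integral estimate $\int_{[t_0,t)}w(s)\phi(\|\mathbf{x}(s)\|)\,d\mu_g(s)\le V(t_0,\mathbf{x}_0)$ with the ``forward propagation of smallness'' via uniform stability and uniqueness of the maximal solution. The only organizational difference is that the paper works $\epsilon$ by $\epsilon$---for each $\epsilon$ it chooses $\sigma$ so large that $\|\mathbf{x}(t)\|\ge\delta$ on $[t_0,t_0+\sigma]$ would make $V(t_0+\sigma,\mathbf{x}(t_0+\sigma))<0$, forcing some $\hat t\in[t_0,t_0+\sigma]$ with $\|\mathbf{x}(\hat t)\|<\delta$---whereas you argue by contradiction that $\mathbf{x}(t)\to\mathbf{0}$, deducing the global lower bound $\|\mathbf{x}(s)\|\ge\delta_\eta$ from the failure of convergence; these are contrapositives of one another and use the identical ingredients.
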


\begin{proof}
(i) The stability of the trivial solution $\mathbf{x}=\mathbf{0}$ holds from uniform stability. Let us choose  $\delta_0 \in(0, r)$ associated to  an $\epsilon_0 \le r$ given by the uniform stability.  Now, for a fixed $t_0 \ge 0$, let $\epsilon>0$. Again, by the uniform stability, there exists $\delta \in (0,\delta_0)$ such that, for all $\hat{t} \in \mathbb{R}^+$ and every $\hat{\mathbf{x}}_0$ satisfying $\|\hat{\mathbf{x}}_0\| < \delta$, one has
$$
\|\hat{\mathbf{x}}(t,\hat{t},\hat{\mathbf{x}}_0)\|< \epsilon  \text{ for all } t\in [\hat{t},\infty) \cap I_{\hat{t},\hat{\mathbf{x}}_0}.
$$
We denote
\begin{equation}\label{eq:proof:thm:asym-stab:M=inf phi}
M = \inf_{s \in [\delta,r_0)}|\phi(s)|.
\end{equation}
By Condition~{{\rm(b)}}, observe that $M>0$.

Let $\mathbf{x}_0 \in B_{\mathbb{R}^n}(\mathbf{0},\delta_0)$. Since
$$
\lim_{t\to +\infty} \int_{[t_0,t_0+t)} w(s)\,d\mu_g(s) =\infty,
$$
we can choose $\sigma > 0$ such that
$$
\int_{[t_0,t_0+\sigma)} w(s)\,d\mu_g(s) > \frac{V(t_0,\mathbf{x}_0)}{M}.
$$
Let $\mathbf{x} : I_{t_0,\mathbf{x}_0} \to B_{\mathbb{R}^n}(\mathbf{0},r_0)$ be a maximal solution of~\eqref{eq:stability-dyn-system}. Using Remark~\ref{rem:stability yields global exist. of sol. starting from x_0<r}, notice that $I_{t_0,\mathbf{x}_0}=[t_0,\infty)$. Now, if there exists $\hat{t} \in [t_0,t_0+\sigma]$ such that $\|\mathbf{x}(\hat{t})\| < \delta$, then, by the uniform stability
 $$
 \|\hat{\mathbf{x}}(t)\| < \epsilon \quad \text{for all } t \in [\hat{t},\infty)\cap I_{\hat{t},\mathbf{x}(\hat{t})},
 $$
 where $\hat{\mathbf{x}}:  I_{\hat{t},\mathbf{x}(\hat{t})} \to B_{\mathbb{R}^n}(\mathbf{0},r_0)$ is the maximal solution of~\eqref{eq:stability-dyn-system} satisfying the initial condition $\hat{\mathbf{x}}(\hat{t}) = \mathbf{x}(\hat{t})$. By the uniqueness of the maximal solution, one has
 $$
 \omega(t_0,\mathbf{x}_0) = \omega(\hat{t},\mathbf{x}(\hat{t}))=\infty \quad \text{and}\quad \mathbf{x}(t) = \hat{\mathbf{x}}(t) \ \text{for all } t\in [\hat{t},\infty).
 $$
 Hence,
 $$
 \|\mathbf{x}(t)\| < \epsilon \quad \text{for all } t \in [t_0+\sigma,\infty)\cap I_{t_0,\mathbf{x}_0}.
 $$
 On the other hand, if $\|\mathbf{x}(t)\| \ge \delta$ for all $t \in [t_0,t_0+\sigma]$, then using Condition~{\rm(a)}, Theorem~\ref{th:Fund-Th}, and~\eqref{eq:proof:thm:asym-stab:M=inf phi}, we obtain
\begin{align*}
 a(\|\mathbf{x}(t_0+\sigma)\|) &\le V(t_0+\sigma,\mathbf{x}(t_0+\sigma))
 \\
 &=V(t_0,\mathbf{x}(t_0,t_0,\mathbf{x}_0))+\int_{[t_0,t_0+\sigma)}V_g'(s,\mathbf{x}(s))\,d\mu_g(s)
 \\
 & \le V(t_0,\mathbf{x}_0) -\int_{[t_0,t_0+\sigma)} w(s) \phi(\|\mathbf{x}(s)\|)\,d\mu_g(s)
 \\
 & \le V(t_0,\mathbf{x}_0) - M\int_{[t_0,t_0+\sigma)} w(s)\,d\mu_g(s)
 \\
 &< 0.
\end{align*}
 This is a contradiction. Therefore, $\mathbf{x}=\mathbf{0}$ is asymptotically stable.
\end{proof}

In the example below, we provide an application of Theorem~\ref{thm:asymptotic stability}.
\begin{exm}\label{exm:asym-stability-1}
  Let us consider the Stieltjes dynamical system
\begin{equation}\label{eq:exm:asym-stability-1}
   x_g'(t) = f(t,x(t))\quad \text{for $g$-almost all $t\ge t_0\ge0$}, \qquad
   x(t_0)=x_0\in\mathbb{R},
\end{equation}
with $g:\mathbb{R}\to \mathbb{R}$  defined by $g(t)=t$ for all $t\le 1$, and $g(t)=t+1$ for $t> 1$, and where $f:\mathbb{R}^+\times\mathbb{R} \to \mathbb{R}$ is a function defined by
$$
f(t,x)=\begin{dcases}
         -\frac{xt}{1+t^2} & \mbox{if } t\in \mathbb{R}^+\setminus D_g, \\
         \nu x & \mbox{if } t\in \mathbb{R}^+\cap D_g, \\
       \end{dcases}
$$
for some $\nu\in\mathbb{R}\setminus\{-1,0\}$. The function $f$ satisfies conditions of Theorem~\ref{thm:global-existence-with-linear-growth-condition}, thus the problem~\eqref{eq:exm:asym-stability-1} has a maximal solution $x: [t_0,+\infty)\to \mathbb{R}$ for every $(t_0,x_0) \in \mathbb{R}^+\times\mathbb{R}$. Observe that $x=0$ is an equilibrium of the dynamical system~\eqref{eq:exm:asym-stability-1}.

Let us define the function $V:\mathbb{R}^+\times \mathbb{R}\to \mathbb{R}$ for every $(t,x) \in\mathbb{R}^+\times \mathbb{R}$ by
$$
V(t,x)=\begin{dcases}
x^2  & \mbox{if } t\in [0,1],\\
 \frac{x^2}{(1+\nu)^2} & \mbox{if } t>1.
      \end{dcases}
$$
Clearly $V\in \mathcal{V}_g^1$, and $a(|u|)\le V(t,u)\le b(|u|)$ for all $(t,u)\in \mathbb{R}^+\times \mathbb{R}$, where $a,b\in \mathcal{K}$ are given by $a(s)=\min\left\{s^2,\frac{s^2}{(1+\nu)^2}\right\}$ and $b(s)=\max\left\{s^2,\frac{s^2}{(1+\nu)^2}\right\}$ for all $s\in \mathbb{R}^+$. In addition, for all $(t,x)\in \mathbb{R}^+\times \mathbb{R}$:
\begin{equation*}
\begin{aligned}
\frac{\partial V}{\partial_g t}(t,x)&=\begin{dcases}
                                       0 & \mbox{if } t\in (0,1)\cup(1,\infty),\\
                                       \frac{\frac{x^2}{(1+\nu)^2}-x^2}{g(1^+)-g(1)} & \mbox{if } t=1,\\
                                     \end{dcases}\\
                                     &=\begin{dcases}
                                       0 & \mbox{if } t\in (0,1)\cup(1,\infty),\\
 \frac{1-(1+\nu)^2}{(1+\nu)^2}x^2 & \mbox{if } t=1.\\
                                       \end{dcases}
\end{aligned}
\end{equation*}
Thus, by means of Proposition~\ref{prop:total g-derivative}, for $t\in [t_0,\infty)\setminus D_g$, we obtain
\begin{equation*}
\begin{aligned}
       V_g'(t,x(t))&= \frac{\partial V}{\partial_g t}(t,x(t))+\frac{\partial V}{\partial x}(t,x(t)) f(t,x(t))\\
 &= \begin{dcases}
 -\frac{2t}{1+t^2} x(t)^2 & \mbox{if } t\in [t_0,\infty)\cap [0,1),\\
 -\frac{2t}{(1+t^2)(1+\nu)^2}x(t)^2 &   \mbox{if } t\in [t_0,\infty)\cap (1,\infty).
 \end{dcases}
  \end{aligned}
\end{equation*}
For $t\in [t_0,\infty)\cap D_g$, if $t_0 \le 1$, then $t=1$ and we have that
\begin{equation*}
\begin{aligned}
       V_g'(1,x(1))&=\frac{V(1^+,x(1^+))-V(1,x(1))}{g(1^+)-g(1)}\\
&=\frac{V(1^+,x(1)+ \mu_g(\{1\})f(1,x(1)))-V(1,x(1))}{g(1^+)-g(1)}\\
&=\frac{\frac{(1+\nu)^2x(1)^2}{(1+\nu)^2}-x(1)^2}{g(1^+)-g(1)}\\
 &= 0.
  \end{aligned}
\end{equation*}
 This implies that
$$
 V_g'(t,x(t))\le-\omega(t)\phi(|x(t)|), \text{ for every } t\ge t_0,
 $$
with $w:\mathbb{R}^+ \to\mathbb{R}^+$, defined by
$$
w(t)=\begin{dcases}
      \frac{2t}{1+t^2}  & \mbox{if } t\in[0,1), \\
       0 & \mbox{if } t=1,\\
      \frac{2t}{(1+t^2)(1+\nu)^2} & \mbox{if } t\in(1,\infty),
     \end{dcases}
$$
 and $\phi\in \mathcal{K}$, defined by $\phi(y)=y^2$ for all $y\in\mathbb{R}^+$. Moreover, for every $t_0\in \mathbb{R}^+$  and $t>1$, we have
\begin{align*}
\int_{[t_0,t_0+t)} w(s)\,d\mu_g(s)  & = \int_{[t_0,t_0+t)\cap([0,1)\cup \{1\}\cup (1,\infty))} w(s)\,d\mu_g(s)\\
 &\ge \frac{1}{(1+\nu)^2} \int_{[t_0,t_0+t)\cap (1,\infty)} \frac{2s}{1+s^2}\,ds \\
   & =  \frac{1}{(1+\nu)^2}\big(\log(1+(t_0+t)^2)-\sup\{\log(2),\log(1+t_0^2)\}\big).
\end{align*}
As, $\inf_{t_0 \in \mathbb{R}^+}\lim_{t\to +\infty} \log(1+(t_0+t)^2)-\sup\{\log(2),\log(1+t_0^2)\} =+\infty$, we obtain that
\begin{equation*}
\inf_{t_0 \in \mathbb{R}^+}\lim_{t\to +\infty} \int_{[t_0,t_0+t)} w(s)\,d\mu_g(s)=+\infty.
\end{equation*}
Thus, Condition~{{\rm(d)}} holds. Therefore, by means of Theorem~\ref{thm:asymptotic stability}, we deduce that $x=0$ is an asymptotically stable equilibrium. Figure~\ref{fig:Asymp-Behav-example1} illustrates the asymptotic behavior of solutions of the dynamical system~\eqref{eq:exm:asym-stability-1}.
\begin{figure}[H]
  \centering
  \includegraphics[height=7.5cm]{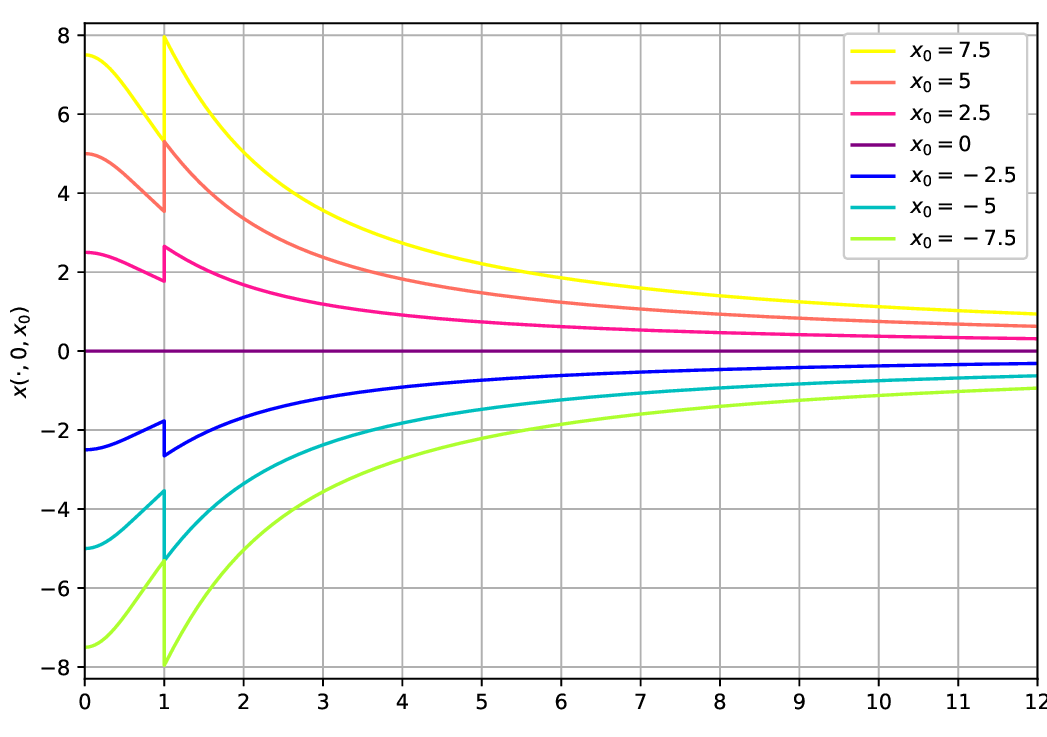}
  \caption{Asymptotic behavior of solutions of the dynamical system~\eqref{eq:exm:asym-stability-1} obtained with a time-discretization step-size of $10^{-3}$, with $\nu=0.5$.}\label{fig:Asymp-Behav-example1}
\end{figure}
\end{exm}

 In the following example, we reconsider the system~\eqref{eq:exm:asym-stability-1} in the case where the set $D_g$ is infinite.
 \begin{exm}\label{exm:asym-stability-2}
  Let us consider the dynamical system
\begin{equation}\label{eq:exm:asym-stability-2}
   x_g'(t) = f(t,x(t))\quad \text{for $g$-almost all $t\ge t_0\ge0$}, \qquad
   x(t_0)=x_0\in\mathbb{R},
\end{equation}
where $g:\mathbb{R}\to \mathbb{R}$ is a derivator such that $D_g=\{t_k\}_{k\in\mathbb{N}}\subset(0,+\infty)$, defined by
$$
g(t)=t+\sum_{k\in\mathbb{N}}\chi_{[t_k,+\infty)}(t)\quad \text{for all $t\in \mathbb{R}$},
$$
 and $f:\mathbb{R}^+\times\mathbb{R} \to \mathbb{R}$ is a function defined by
$$
f(t,x)=\begin{dcases}
         -\frac{xt}{1+t^2} & \mbox{if } t\in \mathbb{R}^+\setminus D_g, \\
         \nu x & \mbox{if } t\in \mathbb{R}^+\cap D_g, \\
       \end{dcases}
$$
for some $\nu\in[-2,-1)$. The function $f$ satisfies conditions of Theorem~\ref{thm:global-existence-with-linear-growth-condition}, thus the problem~\eqref{eq:exm:asym-stability-1} has a maximal solution $x: [t_0,+\infty)\to \mathbb{R}$ for every $(t_0,x_0) \in \mathbb{R}^+\times\mathbb{R}$. Observe that $x=0$ is an equilibrium of the dynamical system~\eqref{eq:exm:asym-stability-2}.

Let us define the function $V:\mathbb{R}^+\times \mathbb{R}\to \mathbb{R}$ for all $(t,x) \in\mathbb{R}^+\times \mathbb{R}$ by
$$
V(t,x)=x^2
$$
Clearly $V\in \mathcal{V}_g^1$, and $a(|u|)\le V(t,u)\le b(|u|)$ for all $(t,u)\in \mathbb{R}^+\times \mathbb{R}$, where $a\in \mathcal{K}$ is given by $a(s)=s^2$ and $b(s)=\frac{s^2}{(1+\nu)^2}$ for all $s\in \mathbb{R}^+$. In addition, for all $t\in [t_0,\infty)$
\begin{equation*}
\frac{\partial V}{\partial_g t}(t,x)=0.
\end{equation*}
Thus, by means of Proposition~\ref{prop:total g-derivative}, for $g$-almost every $t\in [t_0,\infty)\setminus D_g$, we obtain
\begin{equation*}
\begin{aligned}
       V_g'(t,x(t))&= \frac{\partial V}{\partial_g t}(t,x(t))+\frac{\partial V}{\partial x}(t,x(t)) f(t,x(t))\\
 &=  -\frac{2t}{1+t^2} x(t)^2.
  \end{aligned}
\end{equation*}
For $t_k\in [t_0,\infty)\cap D_g$, we have that
\begin{equation*}
\begin{aligned}
V_g'(t_k,x(t_k))&=\frac{V(t_k^+,x(t_k^+))-V(t_k,x(t_k))}{g(t_k^+)-g(t_k)}\\
&=\frac{V(t_k^+,x(t_k)+ \mu_g(\{t_k\})f(t_k,x(t_k)))-V(t_k,x(t_k))}{g(t_k^+)-g(t_k)}\\
       &= \frac{((1+\nu)x(t_k))^2-x(t_k)^2}{g(t_k^+)-g(t_k)}\\
       &=\nu(2+\nu)x(t_k)^2.
  \end{aligned}
\end{equation*}
 This implies that
$$
 V_g'(t,x(t))\le-\omega(t)\phi(|x(t)|), \text{ for $g$-almost every } t\ge t_0,
 $$
with $w:\mathbb{R}^+ \to\mathbb{R}^+$, defined by
$$
w(t)=\begin{dcases}
           \frac{2t}{1+t^2} & \mbox{if } t\in[0,t_1)\cup(t_k,t_{k+1}),\, k\in \mathbb{N},\\
       -\nu(2+\nu) & \mbox{if } t\in\{t_k\}_{k\in \mathbb{N}},
     \end{dcases}
$$
and $\phi\in \mathcal{K}$ defined by $\phi(y)=y^2$ for all $y\in\mathbb{R}^+$. For $t_0,t \in\mathbb{R}^+$, observe that $w$ satisfies:
$$
\begin{aligned}
\int_{[t_0,t_0+t)}w(s)\,d\mu_g(s)&=  \sum_{s\in [t_0,t_0+t)\cap D_g} -\nu(2+\nu) \mu_g(\{s\}) + \int_{t_0}^{t_0+t}\frac{2s}{1+s^2}\,ds\\
&= \sum_{s\in [t_0,t_0+t)\cap D_g} -\nu(2+\nu) \mu_g(\{s\}) + \log\Big(\frac{1+(t_0+t)^2}{1+t_0^2}\Big).
\end{aligned}
$$
Thus,
$$
\inf_{t_0 \in \mathbb{R}^+} \lim\limits_{t\to +\infty} \int_{[t_0,t_0+t)}w(s)\,d\mu_g(s)=+\infty.
$$
Consequently, Condition~{{\rm(d)}} holds. Therefore, by means of Theorem~\ref{thm:asymptotic stability}, we deduce that $x=0$ is an asymptotically stable equilibrium. Figure~\ref{fig:exm:asym-stability-2} illustrates the asymptotic behavior of solutions of the dynamical system~\eqref{eq:exm:asym-stability-2}.
\begin{figure}[H]
  \centering
  \includegraphics[height=7.5cm]{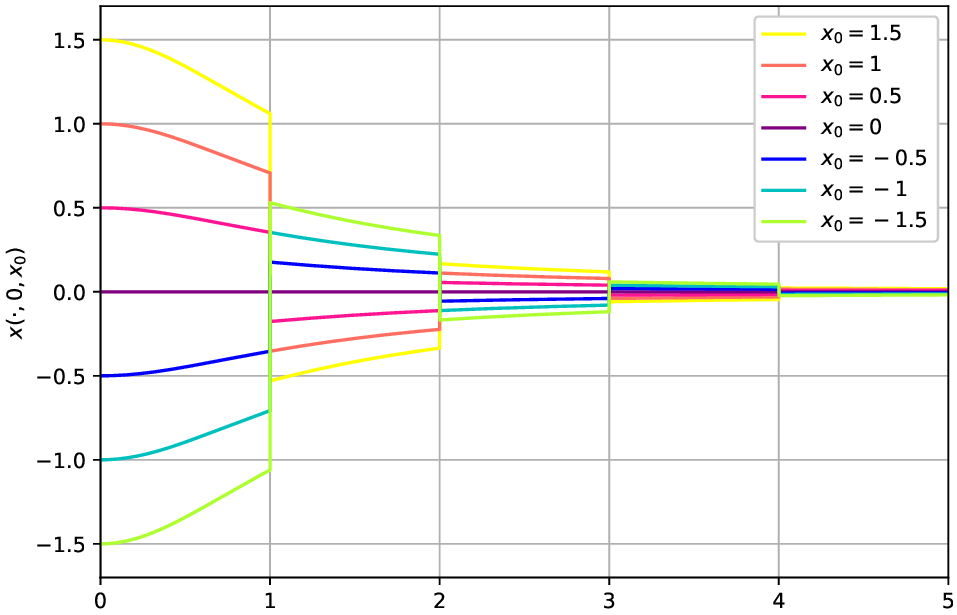}
  \caption{Asymptotic behavior of solutions of the dynamical system~\eqref{eq:exm:asym-stability-2} obtained with a time-discretization step-size of $10^{-3}$, with $\nu=-3/2$.}\label{fig:exm:asym-stability-2}
\end{figure}
\end{exm}

\begin{thm}\label{thm:uniform asymptotic stability}
Assume that Conditions~{\rm(H$_{\Omega}$)},~{\rm(H$_{\mathbf{f}}$)} and~{\rm(H$_r$)} hold. If there  exist $V \in \mathcal{V}_1^g$, $a, b \in \mathcal{K}$, $\phi : \mathbb{R}^+ \to \mathbb{R}^+$ continuous, and  a $g$-measurable function $w:\mathbb{R}^+\to \mathbb{R}^+$ such that
\begin{enumerate}
\item[{{\rm(a)}}] $a(\|\mathbf{u}\|) \le V(t,\mathbf{u}) \le b(\|\mathbf{u}\|)$ for every $(t,\mathbf{u})\in\mathbb{R}^+\times B_{\mathbb{R}^n}(\mathbf{0},r_0)$;
\item[{{\rm(b)}}] $\phi(s)=0$ if and only if $s=0$;
\item[{{\rm(c)}}] for every $(t_0,\mathbf{x}_0) \in \mathbb{R}^+\times B_{\mathbb{R}^n}(\mathbf{0},r)$, the maximal solution $\mathbf{x}:I_{t_0,\mathbf{x}_0}\to B_{\mathbb{R}^n}(\mathbf{0},r_0)$ of the system~\eqref{eq:stability-dyn-system} satisfies
 $$
 V_g'(t,\mathbf{x}(t)) \le -w(t)\phi(\|\mathbf{x}(t)\|) \quad \text{for $g$-almost all } t\in I_{t_0,\mathbf{x}_0}.
 $$
 \item[{{\rm(d)}}] $\lim_{t\to +\infty} \inf_{t_0 \in \mathbb{R}^+}\int_{[t_0,t_0+t)} w(s)\,d\mu_g(s) =+\infty$.
 \end{enumerate}
Then the trivial solution $\mathbf{x}=\mathbf{0}$ of the system~\eqref{eq:stability-dyn-system} is {\it uniformly asymptotically stable}.
\end{thm}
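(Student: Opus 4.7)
My plan is to adapt the proof of Theorem~\ref{thm:asymptotic stability} while exploiting the two strengthenings in the present hypotheses: the upper bound $V(t,\mathbf{u})\le b(\|\mathbf{u}\|)$ in~{\rm(a)}, and the uniform version of~{\rm(d)}, where the $\lim$ is taken outside the $\inf_{t_0}$. These are precisely what is needed to make both the size of the admissible initial data and the time $\sigma$ after which solutions fall below $\epsilon$ independent of $t_0$.

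First I would obtain uniform stability as a direct consequence of Theorem~\ref{thm:stability+uniform stability}{\rm(ii)}. Indeed, since $w\ge 0$ and $\phi\ge 0$, condition~{\rm(c)} immediately gives $V_g'(t,\mathbf{x}(t))\le 0$ for $g$-almost all $t\in I_{t_0,\mathbf{x}_0}$, and~{\rm(a)} supplies both the lower bound $a$ and the upper bound $b$ required by Theorem~\ref{thm:stability+uniform stability}{\rm(ii)}. Taking $\epsilon_0=r$, I obtain $\delta_0\in(0,r)$ such that every maximal solution starting at any $t_0\in\mathbb{R}^+$ with $\|\mathbf{x}_0\|<\delta_0$ satisfies $\|\mathbf{x}(t)\|<r$ for every $t\in I_{t_0,\mathbf{x}_0}$; by Remark~\ref{rem:stability yields global exist. of sol. starting from x_0<r}, all such solutions exist globally on $[t_0,\infty)$.

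Next, given $\epsilon>0$, uniform stability supplies $\delta=\delta(\epsilon)\in(0,\delta_0)$. I set
$$
M=\inf_{s\in[\delta,r_0)}\phi(s),
$$
which is strictly positive by continuity of $\phi$ together with~{\rm(b)}. Using~{\rm(d)}, I choose $\sigma=\sigma(\epsilon)>0$ such that
$$
\inf_{t_0\in\mathbb{R}^+}\int_{[t_0,t_0+\sigma)} w(s)\,d\mu_g(s) \;>\; \frac{b(\delta_0)}{M};
$$
the key point is that this $\sigma$ depends only on $\epsilon$. For any $t_0\in\mathbb{R}^+$ and $\mathbf{x}_0\in B_{\mathbb{R}^n}(\mathbf{0},\delta_0)$, consider the corresponding maximal solution $\mathbf{x}$. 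I argue by the same dichotomy as in the proof of Theorem~\ref{thm:asymptotic stability}. If there exists $\hat{t}\in[t_0,t_0+\sigma]$ with $\|\mathbf{x}(\hat{t})\|<\delta$, uniform stability applied at $(\hat{t},\mathbf{x}(\hat{t}))$ together with uniqueness of the maximal solution gives $\|\mathbf{x}(t)\|<\epsilon$ for all $t\ge\hat{t}$, hence for all $t\ge t_0+\sigma$. Otherwise $\|\mathbf{x}(t)\|\ge\delta$ on $[t_0,t_0+\sigma]$, and using Theorem~\ref{th:Fund-Th}, conditions~{\rm(a)} and~{\rm(c)}, and the definition of $M$ and $\sigma$,
\begin{align*}
0 \;&\le\; a(\|\mathbf{x}(t_0+\sigma)\|) \;\le\; V(t_0+\sigma,\mathbf{x}(t_0+\sigma)) \\
&=\; V(t_0,\mathbf{x}_0)+\int_{[t_0,t_0+\sigma)} V_g'(s,\mathbf{x}(s))\,d\mu_g(s) \\
&\le\; b(\|\mathbf{x}_0\|)-M\int_{[t_0,t_0+\sigma)} w(s)\,d\mu_g(s) \\
&<\; b(\delta_0)-b(\delta_0)\;=\;0,
\end{align*}
a contradiction. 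Thus only the first alternative occurs, and since $\delta_0$ and $\sigma$ depend neither on $t_0$ nor on $\mathbf{x}_0$, this establishes uniform asymptotic stability.

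The single nontrivial step is the uniform choice of $\sigma$: one must observe that the $\inf_{t_0}$ inside the $\lim$ in~{\rm(d)} is what guarantees a common $\sigma$ serving all $t_0$ simultaneously, while in Theorem~\ref{thm:asymptotic stability} the opposite order of quantifiers forced $\sigma$ to depend on $t_0$. The uniform upper bound $b(\delta_0)$ on $V(t_0,\mathbf{x}_0)$ coming from~{\rm(a)} plays the symmetric role on the $\mathbf{x}_0$ side, removing the $\mathbf{x}_0$-dependence of $\sigma$.
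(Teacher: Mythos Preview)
Your proof is correct and follows essentially the same route as the paper's: uniform stability via Theorem~\ref{thm:stability+uniform stability}{\rm(ii)}, the same choice of $M=\inf_{s\in[\delta,r_0)}\phi(s)$, a uniform $\sigma$ from condition~{\rm(d)}, and the identical dichotomy argument ending in the contradiction $0\le a(\|\mathbf{x}(t_0+\sigma)\|)<0$. Your write-up is in fact slightly more explicit than the paper's in justifying global existence via Remark~\ref{rem:stability yields global exist. of sol. starting from x_0<r} and in tracking why $\sigma$ depends only on $\epsilon$.
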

\begin{proof}
By~(ii) of Theorem~\ref{thm:stability+uniform stability}, the trivial solution $\mathbf{x}=\mathbf{0}$ is uniformly stable. Thus, let us choose  $\delta_0 \in(0, r)$ associated to  an $\epsilon_0 \le r$. Let $\epsilon > 0$. Again, by uniform stability, there exists   $\delta \in(0,r)$ such that, for all $\hat{t} \in \mathbb{R}^+$ and every $\hat{\mathbf{x}}_0$ such that $\|\hat{\mathbf{x}}_0\| < \delta$, one has
 $$
 \|\hat{\mathbf{x}}(t,\hat{t},\hat{\mathbf{x}}_0)\|< \epsilon  \text{ for all } t\in [\hat{t},\infty) \cap I_{\hat{t},\hat{\mathbf{x}}_0}.
 $$
 Let $M$ be as defined in~\eqref{eq:proof:thm:asym-stab:M=inf phi}.  Since
 $$
 \lim_{t\to +\infty}\inf_{t_0 \in \mathbb{R}^+} \int_{[t_0,t_0+t)} w(s)\,d\mu_g(s) =+\infty,
 $$
 we can choose $\sigma > 0$ such that
 $$
 \int_{[t_0,t_0+\sigma)} w(s)\,d\mu_g(s) > \frac{b(\delta_0)}{M} \quad \text{for all } t_0 \in \mathbb{R}^+.
 $$
 Let $(t_0,\mathbf{x}_0) \in \mathbb{R}^+\times B_{\mathbb{R}^n}(\mathbf{0},\delta_0)$ and $\mathbf{x} : I_{t_0,\mathbf{x}_0} \to B_{\mathbb{R}^n}(\mathbf{0},r_0)$ a maximal solution of~\eqref{eq:stability-dyn-system}.

 If there exists $\hat{t} \in [t_0,t_0+\sigma] \cap I_{t_0,\mathbf{x}_0}$ such that $\|\mathbf{x}(\hat{t})\| < \delta$, then
 $$
 \|\hat{\mathbf{x}}(t)\| < \epsilon \quad \text{for all } t \in [\hat{t},\infty)\cap I_{\hat{t},\mathbf{x}(\hat{t})},
 $$
 where $\hat{\mathbf{x}}:  I_{\hat{t},\mathbf{x}(\hat{t})}\to B_{\mathbb{R}^n}(\mathbf{0},r_0)$ is the maximal solution of~\eqref{eq:stability-dyn-system} satisfying the initial condition $\hat{\mathbf{x}}(\hat{t}) = \mathbf{x}(\hat{t})$. By the uniqueness of the maximal solution, one has
 $$
 \omega(t_0,\mathbf{x}_0) = \omega(\hat{t},\mathbf{x}(\hat{t})) \quad \text{and}\quad \mathbf{x}(t) = \hat{\mathbf{x}}(t) \ \text{for all } t\in [\hat{t},\infty)\cap I_{t_0,\mathbf{x}_0}.
 $$
 Hence,
 $$
 \|\mathbf{x}(t)\| < \epsilon \quad \text{for all } t \in [t_0+\sigma,\infty)\cap I_{t_0,\mathbf{x}_0}.
 $$
 On the other hand, if $\|\mathbf{x}(t)\| \ge \delta$ for all $t \in [t_0,t_0+\sigma]$, then using Condition~{\rm(a)}, Theorem~\ref{th:Fund-Th}, and~\eqref{eq:proof:thm:asym-stab:M=inf phi}, we obtain
\begin{align*}
 a(\|\mathbf{x}(t_0+\sigma)\|) &\le V(t_0+\sigma,\mathbf{x}(t_0+\sigma))
 \\
 &=V(t_0,\mathbf{x}(t_0,t_0,\mathbf{x}_0))+\int_{[t_0,t_0+\sigma)}V_g'(s,\mathbf{x}(s))\,d\mu_g(s)
 \\
   & \le V(t_0,\mathbf{x}_0) -\int_{[t_0,t_0+\sigma)} w(s) \phi(\|\mathbf{x}(s)\|)\,d\mu_g(s)
   \\
   & \le V(t_0,\mathbf{x}_0) - M\int_{[t_0,t_0+\sigma)} w(s)\,d\mu_g(s)
   \\
   & <b(\|\mathbf{x}_0\|) -b(\|\mathbf{x}_0\|)\\
   &= 0.
\end{align*}
 This is a contradiction. Hence, we conclude that $\mathbf{x}=\mathbf{0}$ is uniformly asymptotically stable.
\end{proof}

 In the next example, we provide an application of Theorem~\ref{thm:uniform asymptotic stability} for a system subject to impulses where the trivial solution $\mathbf{x}=\mathbf{0}$ is uniformly asymptotically stable.
\begin{exm}\label{exm:asym-stability-3}
 Let us consider the dynamical system
\begin{equation}\label{eq:exm:asym-stability-3}
   x_g'(t) = f(t,x(t))\quad \text{for $g$-almost all $t\ge t_0\ge0$}, \qquad
   x(t_0)=x_0\in\mathbb{R},
\end{equation}
where $g:\mathbb{R}\to \mathbb{R}$ is a derivator such that $D_g=\{t_k\}_{k\in\mathbb{N}}\subset(0,+\infty)$, defined by
$$
g(t)=t+\sum_{k\in\mathbb{N}}\chi_{[t_k,+\infty)}(t)\quad \text{for all $t\in \mathbb{R}$},
$$
 and $f:\mathbb{R}^+\times\mathbb{R} \to \mathbb{R}$ is a function defined by
$$
f(t,x)=\begin{dcases}
         -t \arctan(x) & \mbox{if } t\in \mathbb{R}^+\setminus D_g, \\
         \nu_k x & \mbox{if } t=t_k,\, k\in\mathbb{N} \\
       \end{dcases}
$$
where  $\{\nu_k\}_k\subset\mathbb{R}_+^*$ is a sequence satisfying
$$
\lim_{k\to \infty}\frac{1}{\prod_{i=1}^{k}(1+\nu_i)^2}=a_0>0.
$$
 The map $f$ satisfies conditions of Theorem~\ref{thm:global-existence-with-linear-growth-condition}, thus the problem~\eqref{eq:exm:asym-stability-3} has a maximal solution $x: [t_0,+\infty)\to \mathbb{R}$ for every $(t_0,x_0) \in \mathbb{R}^+\times\mathbb{R}$. Observe that $x=0$ is an equilibrium of the Stieltjes dynamical system~\eqref{eq:exm:asym-stability-3}.

Let us define the function $V:\mathbb{R}^+\times \mathbb{R}\to \mathbb{R}$ for all $(t,x) \in\mathbb{R}^+\times \mathbb{R}$ by
$$
V(t,x)=\begin{dcases}
x^2  & \mbox{if } t\in [0,t_1],\\
\frac{x^2}{\prod_{i=1}^{k}(1+\nu_i)^2} & \mbox{if } t\in (t_k,t_{k+1}],\, k\in\mathbb{N}.
      \end{dcases}
$$
Clearly $V\in \mathcal{V}_g^1$, and $a(|x|)\le V(t,x)\le b(|x|)$ for all $(t,x)\in \mathbb{R}^+\times \mathbb{R}$, where $a,b\in \mathcal{K}$ are functions defined by $a(s)=a_0s^2$ and $b(s)=s^2$ for all $s\in \mathbb{R}^+$. In addition, for all $(t,x)\in \mathbb{R}^+\times \mathbb{R}$:
\begin{equation*}
\begin{aligned}
\frac{\partial V}{\partial_g t}(t,x)&=\begin{dcases}
                                       0 &\mbox{if } t\in [0,t_1]\cup (t_k,t_{k+1}),\, k\in\mathbb{N},\\
                                       \frac{\frac{x^2}{\prod_{i=1}^{k}(1+\nu_i)^2}-\frac{x^2}{\prod_{i=1}^{k-1}(1+\nu_i)^2}}{g(t_k^+)-g(t_k)} & \mbox{if }t=t_k,\, k\in\mathbb{N},
                                     \end{dcases}\\
                                     &=\begin{dcases}
0  & \mbox{if } t\in [0,t_1)\cup (t_k,t_{k+1}),\, k\in\mathbb{N},\\
\frac{1-(1+\nu_k)^2}{\prod_{i=1}^{k}(1+\nu_i)^2}x^2
 & \mbox{if }t=t_k,\, k\in\mathbb{N}.
                                       \end{dcases}
\end{aligned}
\end{equation*}
For $g$-almost every $t\in [t_0,\infty)\setminus D_g$, we have that $t\in[0,t_1)$ or there exists $k\in\mathbb{N}$ such that $t\in(t_k,t_{k+1})$. Thus, by means of Proposition~\ref{prop:total g-derivative}, we obtain if $t\in[0,t_1)$:
\begin{equation*}
\begin{aligned}
       V_g'(t,x(t))&= \frac{\partial V}{\partial_g t}(t,x(t))+\frac{\partial V}{\partial x}(t,x(t)) f(t,x(t))\\
 &= -2x(t)t\arctan(x(t)) \\
      &\le     -2t \frac{x(t)^2}{1+x(t)^2},
  \end{aligned}
\end{equation*}
where the last inequality follows from the Mean Value Theorem. While if there exists $k\in\mathbb{N}$ such that $t\in(t_k,t_{k+1})$, then
\begin{equation*}
\begin{aligned}
       V_g'(t,x(t))&= \frac{\partial V}{\partial_g t}(t,x(t))+\frac{\partial V}{\partial x}(t,x(t)) f(t,x(t))\\
 &=-\frac{2x(t)}{\prod_{i=1}^{k}(1+\nu_i)^2}t\arctan(x(t))\\
      &\le -\frac{2t}{\prod_{i=1}^{k}(1+\nu_i)^2} \frac{x(t)^2}{1+x(t)^2}.
  \end{aligned}
\end{equation*}
For $t_k\in [t_0,\infty)\cap D_g$, we have that
\begin{equation*}
\begin{aligned}
       V_g'(t_k,x(t_k))&=\frac{V(t_k^+,x(t_k^+))-V(t_k,x(t_k))}{g(t_k^+)-g(t_k)}\\
&=\frac{V(t_k^+,x(t_k)+ \mu_g(\{t_k\})f(t_k,x(t_k)))-V(t_k,x(t_k))}{g(t_k^+)-g(t_k)}\\
       &=\frac{\frac{(1+\nu_k)^2x(t_k)^2}{\prod_{i=1}^{k}(1+\nu_i)^2}- \frac{x(t_k)^2}{\prod_{i=1}^{k-1}(1+\nu_i)^2} }{g(t_k^+)-g(t_k)}\\
 &= 0 .
  \end{aligned}
\end{equation*}
 Therefore, we conclude that
$$
 V_g'(t,x(t))\le-\omega(t)\phi(|x(t)|), \text{ for$g$-almost all } t\ge t_0,
 $$
where $w:\mathbb{R}^+ \to\mathbb{R}^+$ is the function defined for every $t\in\mathbb{R}^+$ by
$$
w(t)=\begin{dcases}
      2t & \mbox{if } t\in[0,t_1), \\
       0 & \mbox{if }t=t_k,\, k\in \mathbb{N},\\
      \frac{2t}{\prod_{i=1}^{k}(1+\nu_i)^2}  & \mbox{if } t\in(t_k,t_{k+1}),\, k\in \mathbb{N},
     \end{dcases}
$$
and $\phi\in \mathcal{K}$ the function given by $\phi(y)=\frac{y^2}{1+y^2}$ for all $y\in\mathbb{R}^+$.

Observe that the function $w$ satisfies
$$
\begin{aligned}
\lim\limits_{t\to +\infty} \inf_{t_0 \in \mathbb{R}^+}  \int_{[t_0,t_0+t)}w(s)\,d\mu_g(s)&\ge \lim\limits_{t\to +\infty} \inf_{t_0 \in \mathbb{R}^+}\int_{t_0}^{t_0+t}a_02s\,ds\\
&=a_0\lim\limits_{t\to +\infty} \inf_{t_0 \in \mathbb{R}^+} 2tt_0+t^2\\
&=+\infty.
\end{aligned}
$$
By means of Theorem~\ref{thm:uniform asymptotic stability}, we deduce that $x=0$ is uniformly asymptotically stable equilibrium. Figure~\ref{fig:exm:asym-stability-3} illustrates the asymptotic behavior of solutions of the dynamical system~\eqref{eq:exm:asym-stability-3} with $D_g=\mathbb{N}$.
\begin{figure}[H]
  \centering
  \includegraphics[height=7.5cm]{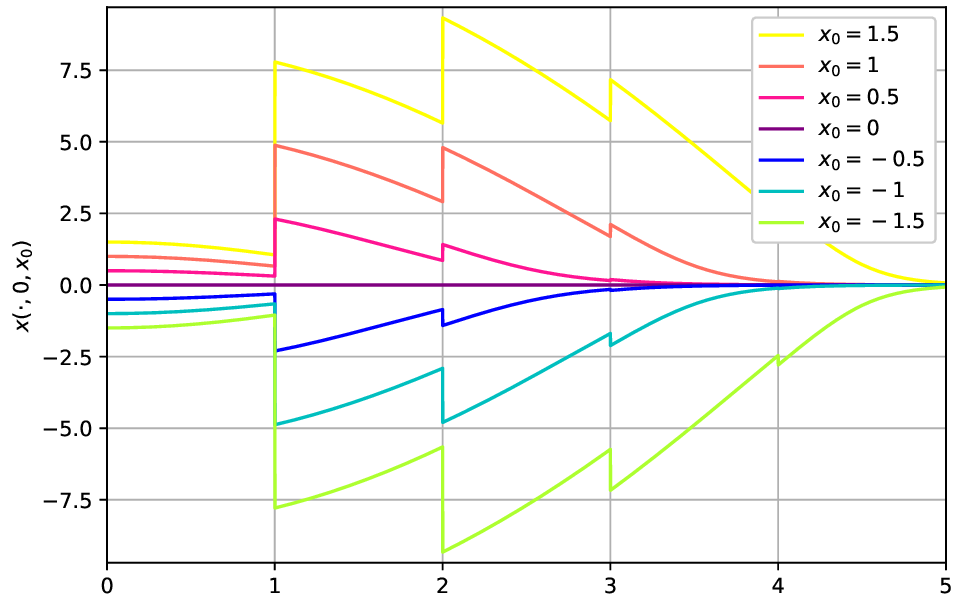}
  \caption{Asymptotic behavior of solutions of the dynamical system~\eqref{eq:exm:asym-stability-3} with $D_g=\mathbb{N}$, obtained with a time-discretization step-size of $10^{-3}$, with $\nu_k=e^{\frac{2}{k^2}}-1$ for all $k\in \mathbb{N}$.}\label{fig:exm:asym-stability-3}
\end{figure}
\end{exm}
\begin{rem}
  Observe in Example~\ref{exm:asym-stability-1} that the function $w$ satisfies
$$
\lim_{t\to +\infty} \inf_{t_0 \in \mathbb{R}^+}\int_{[t_0,t_0+t)} w(s)\,d\mu_g(s) =0 \neq \infty.
$$
Thus, Condition~{{\rm(d)}} of Theorem~\ref{thm:uniform asymptotic stability} does not hold. Consequently, uniform asymptotic stability cannot be deduced.
\end{rem}

In the classical case where $g\equiv \operatorname{id}_{\mathbb{R}}$, corollary results~\cite[Theorem~4.2]{H1} and~\cite{L1} are well-known when Conditions~{{\rm(b)}} of Theorem~\ref{thm:asymptotic stability} is replaced by $V_g'(t,\mathbf{x}(t))
$ being negative definite along each maximal solution $\mathbf{x}$ for every $t\ge t_0$. However, to present an analogous statement, we require an additional assumption to avoid the case when  $\lim\limits_{t\to\infty} g(t)=l<\infty$, and in particular,  when there exists $T\ge0$ such that $(T,\infty)\subset C_g$, Example~\ref{exm:changeof-stability-properties-linear-Stieltjes-dyn-system} provides an interesting illustration of attractivity lack for asymptotic stability.
\begin{cor}\label{cor:asymptotic stability:V<0}
Assume that  $\lim\limits_{t\to\infty} g(t)=\infty$.  If there  exist $V:\mathbb{R}^+\times B_{\mathbb{R}^n}(\mathbf{0},r_0) \to \mathbb{R}$; $V\in \mathcal{V}_1^g$ and $a,b,\phi\in \mathcal{K}$ such that
 \begin{enumerate}
\item[{{\rm(a)}}] $a(\|\mathbf{u}\|) \le V(t,\mathbf{u})\le b(\|\mathbf{u}\|)$, for every $(t,\mathbf{u})\in\mathbb{R}^+\times B_{\mathbb{R}^n}(\mathbf{0},r_0)$;
\item[{{\rm(b)}}]  for every $(t_0,\mathbf{x}_0) \in \mathbb{R}^+\times B_{\mathbb{R}^n}(\mathbf{0},r)$, the maximal solution $\mathbf{x}:I_{t_0,\mathbf{x}_0}\to B_{\mathbb{R}^n}(\mathbf{0},r_0)$ of the system~\eqref{eq:stability-dyn-system} satisfies
$$
V_g'(t,\mathbf{x}(t))\le -\phi(\|\mathbf{x}(t)\|), \text{ for $g$-almost all } t\ge t_0.
$$
     \end{enumerate}
then, the trivial solution $\mathbf{x}=\mathbf{0}$ of the system~\eqref{eq:stability-dyn-system} is  asymptotically stable. Furthermore, if $\lim\limits_{t\to +\infty}\inf_{t_0 \in \mathbb{R}^+} g(t+t_0)-g(t_0)=\infty$, then, the trivial solution $\mathbf{x}=\mathbf{0}$ of the system~\eqref{eq:stability-dyn-system} is uniformly asymptotically stable.
\end{cor}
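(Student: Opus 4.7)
The plan is to derive this corollary directly from Theorems~\ref{thm:stability+uniform stability}(ii), \ref{thm:asymptotic stability}, and~\ref{thm:uniform asymptotic stability} by taking the constant weight $w\equiv 1$. With this choice, Condition~{\rm(c)} of both Theorems~\ref{thm:asymptotic stability} and~\ref{thm:uniform asymptotic stability} follows immediately from hypothesis~{\rm(b)} of the corollary, and the defining property of $\mathcal{K}$ yields $\phi(s)=0$ if and only if $s=0$, which is Condition~{\rm(b)} of Theorem~\ref{thm:asymptotic stability}. Thus the only real work is to check the integral growth condition~{\rm(d)} of the corresponding theorem.

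First, I would observe that the hypotheses~{\rm(a)} and~{\rm(b)} here imply the hypotheses of Theorem~\ref{thm:stability+uniform stability}(ii): indeed $a(\|\mathbf{u}\|)\le V(t,\mathbf{u})\le b(\|\mathbf{u}\|)$ is exactly Condition~{\rm(a)} together with~\eqref{eq:V is decrescent}, and $-\phi(\|\mathbf{x}(t)\|)\le 0$ gives Condition~{\rm(b)}. Hence the trivial solution is already uniformly stable, and in particular stable.

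Next, for the asymptotic stability statement, I would compute
\begin{equation*}
\int_{[t_0,t_0+t)} w(s)\,d\mu_g(s) \;=\; \mu_g\bigl([t_0,t_0+t)\bigr) \;=\; g(t_0+t)-g(t_0).
\end{equation*}
Since $\lim_{t\to\infty} g(t)=\infty$, for each fixed $t_0\in\mathbb{R}^+$ we get $\lim_{t\to\infty}\bigl(g(t_0+t)-g(t_0)\bigr)=\infty$, so Condition~{\rm(d)} of Theorem~\ref{thm:asymptotic stability} is satisfied and asymptotic stability follows. For the \emph{furthermore} clause, the additional hypothesis $\lim_{t\to+\infty}\inf_{t_0\in\mathbb{R}^+}\bigl(g(t+t_0)-g(t_0)\bigr)=\infty$ is precisely Condition~{\rm(d)} of Theorem~\ref{thm:uniform asymptotic stability} with $w\equiv 1$, so uniform asymptotic stability is obtained.

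The only mildly delicate point is noting that the order of $\inf$ and $\lim$ in Conditions~{\rm(d)} of Theorems~\ref{thm:asymptotic stability} and~\ref{thm:uniform asymptotic stability} is genuinely different: for asymptotic stability, taking the limit first before the infimum lets the hypothesis $\lim_{t\to\infty}g(t)=\infty$ alone suffice, whereas uniform asymptotic stability requires the uniform-in-$t_0$ growth of the increments of $g$, which is exactly why the separate stronger assumption is imposed for the last conclusion. No further technical obstacle is expected.
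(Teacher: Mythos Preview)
Your proposal is correct and follows essentially the same approach as the paper: take $w\equiv 1$, compute $\int_{[t_0,t_0+t)}1\,d\mu_g(s)=g(t_0+t)-g(t_0)$, and invoke Theorems~\ref{thm:asymptotic stability} and~\ref{thm:uniform asymptotic stability}. Your write-up is in fact slightly more careful than the paper's, since you explicitly verify uniform stability via Theorem~\ref{thm:stability+uniform stability}(ii) (needed as a hypothesis of Theorem~\ref{thm:asymptotic stability}) and check that $\phi\in\mathcal{K}$ gives Condition~{\rm(b)} there.
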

\begin{proof}  Observe that Conditions of Theorem~\ref{thm:asymptotic stability} hold for $\omega\equiv 1$ as
$$
\inf_{t_0 \in \mathbb{R}^+} \lim\limits_{t\to +\infty}\int_{[t_0,t+t_0)}\omega(s)\,d\mu_g(s) = \inf_{t_0 \in \mathbb{R}^+}\lim\limits_{t\to +\infty}g(t+t_0)-g(t_0)=+\infty.
$$
Hence, the trivial solution is $\mathbf{x}=\mathbf{0}$ of the system~\eqref{eq:stability-dyn-system} is  asymptotically stable. Moreover, if $\lim\limits_{t\to +\infty}\inf_{t_0 \in \mathbb{R}^+} g(t+t_0)-g(t_0)=\infty$, then, Theorem~\ref{thm:uniform asymptotic stability} ensures that $\mathbf{x}=\mathbf{0}$  is uniformly asymptotically stable.
\end{proof}
\section{Applications to dynamics of population}
\subsection{Stable equilibrium of a population subject to train vibrations}
In this subsection, by means of a system of Stieltjes differential equations, we study the long-term impact of high-speed train vibrations and  noise pollution on a population of animals living near railways. Depending on the species and their sensitivity to vibrations, various implications can be observed, we cite for instance:
\begin{itemize}
  \item[$\bullet$] Hearing damage resulting in from the significant noise and vibrations that can potentially harm animals with sensitive hearing such as certain small mammals, birds, and bats which rely heavily on their hearing for communication, navigation, detection of predators, and finding food, thus, prolonged exposure to train vibrations may lead to hearing impairment or damage, disrupting their normal behaviors and increasing their death rate.
  \item[$\bullet$] Increased stress levels since some animals may be startled by the vibrations. This can affect their feeding patterns, reduce their reproduction rate, or lead to emigration resulting in a loss of suitable habitat and altering the composition and diversity of the local ecosystem
  \item[$\bullet$] Ecological interactions disruption which can affect pollination  for instance if the vibrations deter insects that are important pollinators, disturb ground-dwelling organisms (insects, reptiles, and small mammals\dots) which can implicitly impact other species that rely on them as a food source.
\end{itemize}
To these aims, an Allee effect can be observed in this regard, especially when the survival of the population depends on a minimum threshold size $M>0$. In the follows, we denote $x(t)$ as the number of individuals of a population living in a region near a railway with a carrying capacity $K>M$.
Let us assume that a certain number $m>0$ of trains pass through the area every day. We refer to $\{\tau_i\}_{i=1}^{i=m}$ as the moments when trains pass in a single day.
Once a train pass by the area, its impact is significant for a proportion of individuals living near the railway. They may experience vibrations or direct injuries. In the following analysis, we use a Stieltjes differential equation to model the dynamics of this population affected by train vibrations, and we study the asymptotic behavior of its solutions. In doing so,  we require a derivator $g:\mathbb{R}\to \mathbb{R}$ presenting discontinuities for $t\in\{\tau_i\}_{i=1}^{i=m}+24\mathbb{Z}$ such that $\mu_g(\{t\})$  quantifies the rate at which the risk of damage varies. Depending on the specific $\tau_i$, this rate can either increase or decrease, reflecting the varying impact of vibrations during daylight and nighttime hours. For simplicity,  we can take for instance:
$$
        g(t)=t+\sum_{k\in\mathbb{N}}\sum_{i=1}^{m}\chi_{[\tau_i+24k,\infty)}(t), \quad \text{for all } t\in \mathbb{R},
$$
with $\mu_g(\{\cdot\})\equiv1$ on $D_g= \{\tau_i\}_{i=1}^{i=m}+24\mathbb{Z}$.

In the sequel, we suggest to analyse the asymptotic behaviour of the dynamics of this population, through the study of asymptotic stability of the zero equilibrium of the Stieltjes dynamical system:
\begin{equation}\label{eq:model-1}
     x_g'(t) =f(t,x(t)),\text{ for $g$-almost every $t\ge t_0\ge0$,} \qquad x(t_0) =x_0,
\end{equation}
where $f: \mathbb{R}^+\times\mathbb{R}\to \mathbb{R}$ is defined by
\begin{equation}\label{eq:f-model1}
f(t,x) =\begin{dcases}
                  \rho x\Big(1-\frac{x}{K}\Big)\Big(\frac{x}{M}-1\Big), & \mbox{if } t\notin\{\tau_i+24k\}_{i=1}^{m},\,\,k=0,1,2\dots \\
                  -dx, & \mbox{if } t\in\{\tau_i+24k\}_{i=1}^{m},\,\,k=0,1,2\dots
                  \end{dcases}
\end{equation}
The parameters of the model can be understood as:
\begin{description}
\item[$K>M$] the carrying capacity of the environment.
\item[$\rho>0$] intrinsic rate of reproduction of the population.
\item[$d\in(0,1)$] Constant related to the impact induced by trains, either a migration rate immediately following the passage of trains or a mortality rate for certain populations that live in close proximity to the railway.
\end{description}

To simplify the analysis, we make the assumption that a train passes every hour over a 24-hour period. Thus, $g(t)= t+\sum_{i\in \mathbb{Z}}\chi_{[i+1,\infty)}(t)$ for all $t\in \mathbb{R}$, $D_g=\mathbb{Z}$  and $\mu_g(\{t\})=1$ for all $t\in D_g$.

The dynamics present several equilibria. However, we will focus on the zero equilibrium, to study its local asymptotic stability within a region $(-r_0,r_0)$; $r_0>0$, that will be determined to enhance the impact of environmental factors threatening this population.

Since the trivial solution $x=0$ is an equilibrium of the dynamical system~\eqref{eq:model-1},  let us consider $r_0\le M$. For all $(t_0,x_0) \in(\mathbb{R}^+\cap D_g)\times (-r_0,r_0)$, if $x$ is a solution of~\eqref{eq:model-1}, then $x(t_0^+)=x(t_0)+\mu_g(\{t_0\})f(t_0,x(t_0))=(1-d)x(t_0) \in(-r_0,r_0)$. Thus, $f$ satisfies~{\rm(H$_{r}$)} for $r=r_0$. Combining this with Theorem~\ref{thm:existence of maximal solution}, yields existence of a maximal solution $x: I_{t_0,x_0}\to \mathbb{R}$ for every $(t_0,x_0) \in \mathbb{R}^+\times(-r_0,r_0)$. Let $\omega(t_0,x_0):=\sup I_{t_0,x_0}$. Let us construct this solution through local existence to show that $\omega(t_0,x_0)=\infty$. First of all, let $\tau\in(t_0,\omega(t_0,x_0))$ such that $x:[t_0,\tau]\to (-r_0,r_0)$ is a solution of~\eqref{eq:model-1}. Observe that if $x(\tau)=0$, by uniqueness of the solution, we deduce that $x\equiv 0$ which lies in $(-r_0,r_0)$. Thus, two other interesting cases occur when $x(\tau) \neq 0$:

\textbf{Case 1:} if $\tau \in D_g$, then $x(\tau^+)=x(\tau)+\mu_g(\{\tau\})f(\tau,x(\tau))=(1-d)x(\tau)\in (-r_0,r_0)$.

\textbf{Case 2:} if $\tau\notin D_g$, then we distinguish two subcases:

\textbf{\quad Subcase 1:} if $x(\tau)\in(0,r_0)$ then
$x_g'(\tau)=f(\tau,x(\tau))<0$. Thus, by $g$-continuity of $x$ at $\tau$ there exists $\tau_1 \in(\tau,\omega(t_0,x_0))$ such that $x:[t_0,\tau_1]\to (0,r_0)$.

\textbf{\quad Subcase 2:} if $x(\tau)\in(-r_0,0)$ then $x_g'(\tau)=f(\tau,x(\tau))>0$. Thus, there exists  $\tau_2 >0$ such that $x:[\tau,\tau_2]\to (-r_0,0)$.

Repeating the same argument for each subinterval of $I_{t_0,x_0}$, we deduce that the solution $x(t)\in[-\lambda_{x_0},\lambda_{x_0}]\subset(-r_0,r_0)$ for all $t\in I_{t_0,x_0}$, where
$$
\lambda_{x_0}=\sup_{t\in[t_0,\tau]}|x(t,t_0,x_0)|.
$$
By Corollary~\ref{cor:compacity yields tm=infty}, we deduce that $\omega(t_0,x_0)=\infty$. Hence, $x:[t_0,\infty)\to (-r_0,r_0)$ is a maximal solution of~\eqref{eq:model-1}.

Now, we define the function $V:\mathbb{R}^+\times (-r_0,r_0)\to \mathbb{R}$ by $V(t,x)=x^2$ for all $(t,x) \in\mathbb{R}^+\times(-r_0,r_0)$. Clearly $V\in \mathcal{V}_g^1$. For all $(t_0,x_0) \in\mathbb{R}^+\times (-r_0,r_0)$, let $x:[t_0,\infty)\to (-r_0,r_0)$ be the maximal solution of~\eqref{eq:model-1}. Thus, using Proposition~\ref{prop:total g-derivative}, we obtain for $g$-almost all  $t \in [t_0,\infty)\setminus D_g$:
\begin{equation*}
  \begin{aligned}
       V_g'(t,x(t))&=
 \frac{\partial V}{\partial_g t}(t,x(t))+\frac{\partial V}{\partial x}(t,x(t)) x_g'(t)\\
 &=\frac{\partial V}{\partial_g t}(t,x(t))+\frac{\partial V}{\partial x}(t,x(t)) f(t,x(t))\\
       &= 2\rho x(t)^2\Big(1-\frac{x(t)}{K}\Big)\Big(\frac{x(t)}{M}-1\Big).
  \end{aligned}
\end{equation*}
While for $t \in [t_0,\infty)\cap D_g$, we obtain
\begin{equation*}
  \begin{aligned}
       V_g'(t,x(t))&=\frac{V(t^+,x(t^+))-V(t,x(t))}{g(t^+)-g(t)}\\
       &=\frac{V(t^+,x(t)+ \mu_g(\{t\})f(t,x(t)))-V(t,x(t))}{g(t^+)-g(t)}\\
       &=(1-d)^2x(t)^2-x(t)^2\\
       &=(-2d+d^2)x(t)^2.
  \end{aligned}
\end{equation*}
As $(-2d+d^2)<0$, it follows that $V_g'(t,x(t))$ is negative definite. Since $a(|x|)\le V(t,x)\le b(|x|)$ for all $(t,x)\in \mathbb{R}^+\times(-r_0,r_0)$ with $a,b\in\mathcal{K}$ defined by $a(s)=b(s)=s^2$ for all $y\in\mathbb{R}^+$,  Corollary~\ref{cor:asymptotic stability:V<0} ensures that the trivial solution $x=0$ is  uniformly  asymptotically stable. The asymptotic behavior of solutions is illustrated in Figure~\ref{fig:Asymp-Behav-model1}.
\subsection*{Comments}
Figure~\ref{fig:Asymp-Behav-model1} illustrates the long-term effect of the high-speed trains vibrations. As shown in the figure,  if the vibrations lead to an Allee effect, the population will decline significantly over time.  This outcome raises alarm about the overall stability and the persistence of the whole ecosystem in this region. Specifically, it indicates a high likelihood of population extinction  when there is no estimation showing that the initial population exceeds $M$.
\begin{figure}[H]
  \centering
  \includegraphics[width=12cm]{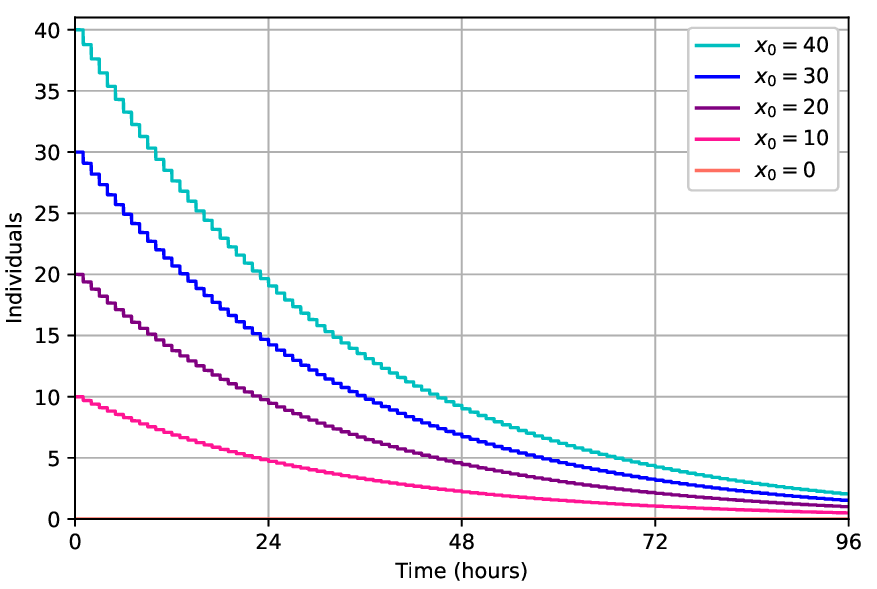}
  \caption{Asymptotic behavior of solutions of the dynamical system~\eqref{eq:model-1} obtained with a time-discretization step-size of $10^{-3}$, with $\rho=0.001$, $K=100$, $M=50$, and $d=0.03$.}\label{fig:Asymp-Behav-model1}
\end{figure}

\subsection{Stable equilibrium of Bacteria-Ammonia dynamics}
{\it Cyanobacteria}, similar to plants, participate in oxygenic photosynthesis as they are photosynthetic bacteria. {\it Photosynthesis } is the biochemical process through which organisms convert light energy into chemical energy in the form of glucose or other organic compounds resulting oxygen release. The energy captured is then used to fuel the synthesis of organic molecules such as glucose, serving as an energy source for the bacteria.

Cyanobacteria  are found in diverse habitats, including freshwater, marine environments, and terrestrial ecosystems. In this section, we consider a species of cyanobacteria that has also the ability to fix nitrogen such as {\it Anabaena cyanobacteria}. Through nitrogen fixation, atmospheric nitrogen gas $(N_2)$ is converted into a form that can be used by plants and other organisms by means the enzyme {\it nitrogenase}. This enzyme catalyzes the conversion of nitrogen gas $(N_2)$ into ammonium ions $(NH_4^+)$ based on a considerable amount of energy obtained from photosynthesis, these ammonium ions are assimilated into amino acids and proteins, which are essential for the growth and survival of Cyanobacteria. As a by product of nitrogen fixation, ammonia gas $(NH_3)$ can be  released. Some of the assimilated ammonium ions $(NH_4^+)$ are released back into the environment providing neighboring vegetative cells with a source of nitrogen.

To avoid losing valuable nitrogen nutrients and optimizing nitrogen utilization efficiency, this population has mechanisms allowing ammonium ions $(NH_4^+)$ reabsorption, and ammonia gas $(NH_3)$ assimilation through converting ammonia gas $(NH_3)$ into ammonium ions $(NH_4^+)$. In what follows, the term "ammonia" refers to both the protonated and unprotonated forms, which are denoted as $(NH_3)$ and $(NH_4^+)$ respectively~\cite{B}. It is worth mentioning that ammonia is commonly used in cleaning products, fertilizers. Beyond that, ammonia's cooling properties make it an essential refrigerant in air conditioning systems and refrigerators.

To optimize resource utilization and adapt to the varying environmental conditions, the population undergoes a day-night cycling of nitrogen fixation and carbon consumption~\cite{WRSHSG}. This is due to the sensitivity of the nitrogenase enzyme responsible for nitrogen fixation to oxygen. Thus, nitrogen fixation is carried out during daylight hours when the photosynthesis can provide the necessary energy and oxygen levels are relatively low. During the nights, since oxygen levels within the cells would be higher, this population of cyanobacteria reduces their metabolic activity and growth and relies on stored carbon compounds to fulfill their energy needs. Our objective in the sequel is to observe the dynamics of this population which thrives in the presence of ammonia in a culture room without exposure to artificial light,  tracking the levels of the ammonia during this the process. Here, we assume that  the carbon dioxide $(CO_2)$, the nitrogen gas $(N_2)$ and nutrients supply are maintained steady as well as the PH level.  Since the population undergoes dormancy phases during the nights, we identify the days with intervals of the form $[2k,2k+1]$, $k=0,1,2,\dots$, and the night with intervals $[2k+1,2k+2]$, $k=0,1,2,\dots$ In our example, we differentiate with respect to a derivator $g$ whose variation describes the intensity of light, which is necessary for the photosynthesis process. We require that $g$ presents smaller slops at the beginning and at the end of the daylight hours, with maximal slops at middays where $t=2k+1/2$, and remains constant during the dormancy phases in the night $[2k+1,2k+2]$, $k=0,1,2,\dots$ For instance, we consider $g:\mathbb{R} \to \mathbb{R}$ defined by
$$
g(t)=\begin{dcases}
      \frac{\sin(\pi(t-1/2))+1}{2} & \mbox{if } t\in[0,1] \\
       1 &\mbox{if } t\in(1,2],
     \end{dcases}
$$
and $g(t)=g(1)+g(t-2)$ for $t\ge 2$.
\begin{figure}[H]
  \centering
  \includegraphics[height=7.5cm]{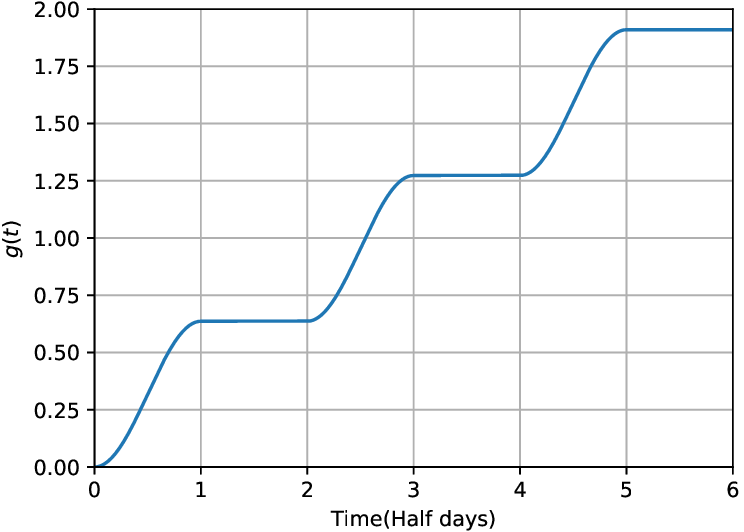}
  \caption{Graph of the derivator $g$.}
\end{figure}
We denote $N(t)$ the Biomass of cyanobacteria~(grams per liter), and $A(t)$ the ammonia concentration in the environment at time $t\ge 0$.  Since the population thrives in the presence of ammonia, we can assume that the growth rate is proportional to the level of ammonia. We denote $\rho$ the maximal intrinsic coefficient of reproduction that the population can reach in the presence of one unit of ammonia with maximal sunlight intensity. Thus, the dynamics can be modeled  using the autonomous system of Stieltjes differential equations:
\begin{equation}\label{eq:model-2}
\begin{aligned}
     &\mathbf{u}_g'(t) = \mathbf{F}(\mathbf{u}(t)),\text{ for $g$-almost every $t\ge t_0\ge0$,}
     \\
      &\mathbf{u}(0) =\mathbf{u}_0 =(N_0,A_0),
      \end{aligned}
\end{equation}
where $\mathbf{u}=(N,A)$ and $\mathbf{F}=(F_1,F_2) : \mathbb{R}^2\to \mathbb{R}^2$ is defined by
\begin{equation}\label{eq:F-model2}
\begin{aligned}
F_1(N,A)  &=  \rho AN\Big(1-\frac{N}{K}\Big)\\
F_2(N,A)  &= \alpha N-\beta AN.
\end{aligned}
\end{equation}
where the parameters of the model can be understood as:
\begin{description}
\item[$K>0$] the carrying capacity of the culture room, which forms a spacial constraint for growth.
\item[$\alpha>0$] Constant related to the production of ammonia through nitrogen fixation.
\item[$\beta>0$] Constant related to the proportion of the reabsorption of ammonia by the population depending on the level of ammonia in the environment.
\end{description}
Observe that $\mathbf{u}^*=(K,\alpha/\beta)$ is an equilibrium of the system~\eqref{eq:model-2} among other equilibria. Its asymptotic stability would guarantee the persistence of the population with nonzero ammonia production. Therefore, we study local asymptotic stability in a neighborhood $B_{\mathbb{R}^2}(\mathbf{u}^*,r_0)$; $r_0>0$, of this equilibrium. To this aim, we transfer our study in a neighborhood of $\mathbf{x}=\mathbf{0}=(0,0) \in \mathbb{R}^2$. If we adopt the variable change $\mathbf{x}=(x_1,x_2):=\mathbf{u}-\mathbf{u}^*$, we obtain the system
\begin{equation}\label{eq:centered model2}
\begin{aligned}
     &\mathbf{x}_g'(t) = \mathbf{f}(\mathbf{x}(t)),\text{ for $g$-almost every $t\ge t_0\ge0$,}
     \\
      &\mathbf{x}(0) =\mathbf{x}_0 :=\mathbf{u}_0-\mathbf{u}^*,
      \end{aligned}
\end{equation}
where $\mathbf{f}=(f_1,f_2) : \mathbb{R}^2\to \mathbb{R}^2$ is defined by
\begin{equation}\label{eq:f-model}
\begin{aligned}
f_1(x_1,x_2)  &=  -\rho\frac{x_1}{K}(x_2+\alpha/\beta)(x_1+K)\\
f_2(x_1,x_2)  &=  -\beta x_2(x_1+K).
\end{aligned}
\end{equation}
$\mathbf{x}=\mathbf{0}=(0,0) \in \mathbb{R}^2$ is an equilibrium of the dynamical system~\eqref{eq:centered model2}. Next, we prove that $\mathbf{x}=\mathbf{0}$ is  asymptotically stable, implying that $\mathbf{u}^*=(K,\alpha/\beta)$ is an asymptotically stable equilibrium of the system~\eqref{eq:model-2}.

Let us consider $r_0=\min\{K,\alpha/\beta\}$, and let $(t_0,\mathbf{x}_0) \in \mathbb{R}^+\times B_{\mathbb{R}^2}(\mathbf{0},r_0)$. Arguing as in the previous subsection, since the function $\mathbf{f}$ satisfies conditions of Theorem~\ref{thm:existence of maximal solution}, we prove existence of the maximal solution $\mathbf{x}=(x_1,x_2):[t_0,\infty)\to B_{\mathbb{R}^2}(\mathbf{0},r_0)$ of the system~\eqref{eq:centered model2}. Initially, let $\mathbf{x}: I_{t_0,\mathbf{x}_0}\to \mathbb{R}^2$  be the maximal solution of~\eqref{eq:centered model2} with $\omega(t_0,\mathbf{x}_0):=\sup I_{t_0,\mathbf{x}_0}$. Let $\tau\in(0,\omega(t_0,\mathbf{x}_0))\setminus C_g$ such that $\mathbf{x}=(x_1,x_2):[t_0,\tau]\to B_{\mathbb{R}^2}(\mathbf{0},r_0)$ is a solution of~\eqref{eq:model-1}. In what follows, we analyze the possible cases:

\textbf{Case 1:} if $x_1(\tau),x_2(\tau)\in(0,r_0)$ (resp. $x_1(\tau),x_2(\tau)\in(-r_0,0)$),  then, for $i=1,2$, we have
$(x_i)_g'(\tau)=f_i(\mathbf{x}(\tau))<0$ (resp. $(x_i)_g'(\tau)=f_i(\mathbf{x}(\tau))>0$). Therefore, by $g$-continuity of $x$ at $\tau$, there exists $\tau_1 \in(\tau,\omega(t_0,\mathbf{x}_0))$ ($\tau_1$ can be chosen such that $\tau_1\notin C_g$) such that $x_i:[t_0,\tau_1]\to (0,r_0)$ (resp. $x_i:[t_0,\tau_1]\to (-r_0,0)$).

\textbf{Case 2:} if $x_1(\tau)\in(0,r_0)$ and $x_2(\tau)\in(-r_0,0)$, then
$(x_1)_g'(\tau)=f_1(\mathbf{x}(\tau))<0$ and $(x_2)_g'(\tau)=f_2(\mathbf{x}(\tau))>0$.  Therefore, there exists $\tau_2 \in(\tau,\omega(t_0,\mathbf{x}_0))\setminus C_g$ such that $x_1:[\tau,\tau_2]\to (0,r_0)$, and $x_2:[\tau,\tau_2]\to (-r_0,0)$.

\textbf{Case 3:} if $x_1(\tau)\in(-r_0,0)$ and $x_2(\tau)\in(0,r_0)$, then similarly to \textbf{Case 2}, we deduce that there exists $\tau_3 \in(\tau,\omega(t_0,\mathbf{x}_0))$ such that $x_1:[\tau,\tau_3]\to(-r_0,0)$, and $x_2:[\tau,\tau_3]\to (0,r_0)$.

Repeating the same argument for each subinterval of $I_{t_0,\mathbf{x}_0}$, we deduce that the solution $\mathbf{x}(t)\in [-\lambda_{\mathbf{x}_{0,1}},\lambda_{\mathbf{x}_{0,1}}]\times [-\lambda_{\mathbf{x}_{0,2}},\lambda_{\mathbf{x}_{0,2}}]\subset B_{\mathbb{R}^2}(\mathbf{0},r_0)$ for all $t\in I_{t_0,\mathbf{x}_0}$, where
$$
\lambda_{\mathbf{x}_{0,i}}=\sup_{t\in[t_0,\tau]} |x_i(t,t_0,\mathbf{x}_0)|, \quad\text{ for } i=1,2\text{ and }\mathbf{x}_0=(\mathbf{x}_{0,1} \mathbf{x}_{0,2}).
$$
Using Corollary~\ref{cor:compacity yields tm=infty}, we obtain that $\omega(t_0,\mathbf{x}_0)=\infty$. Hence, $x:[t_0,\infty)\to B_{\mathbb{R}^2}(\mathbf{0},r_0)$ is the maximal solution of~\eqref{eq:centered model2}.

Now, let us consider the function $V:\mathbb{R}^+\times B_{\mathbb{R}^2}(\mathbf{0},r_0) \to \mathbb{R}$ defined by $V(t,\mathbf{x})=x_1^2+x_2^2$ for all $t\ge 0$ and $\mathbf{x}=(x_1,x_2)\in \mathbb{R}^2$. It is clear that $V\in \mathcal{V}_1^g$.
Let $(t_0,\mathbf{x}_0)\in \mathbb{R}^+\times B_{\mathbb{R}^2}(\mathbf{0},r_0)$, and $\mathbf{x}:[t_0,\infty)\to B_{\mathbb{R}^2}(\mathbf{0},r_0)$ be the maximal solution of~\eqref{eq:centered model2}. By  means of Proposition~\ref{prop:total g-derivative}, for $g$-almost all  $t \in [t_0,\infty)$, we obtain
\begin{equation}\label{eq:total g-derivative of centered model2}
\begin{aligned}
V_g'(t,\mathbf{x}(t))&=
 \frac{\partial V}{\partial_g t}(t,\mathbf{x}(t))+ \sum_{i=1}^{2}\frac{\partial V}{\partial x_i}(t,\mathbf{x}(t)) (x_i)_g'(t)\\
 &=\frac{\partial V}{\partial_g t}(t,\mathbf{x}(t))+ \sum_{i=1}^{2}\frac{\partial V}{\partial x_i}(t,\mathbf{x}(t)) f_i(t,\mathbf{x}(t))\\
 &= -2\rho\frac{x_1(t)^2}{K}(x_2(t)+\alpha/\beta)(x_1(t)+K)-2\beta x_2(t)^2(x_1(t)+K).
 \end{aligned}
\end{equation}
Thus, $V_g'(t,\mathbf{x}(t))$ is negative definite. Since $a(\|\mathbf{z}\|)\le V(t,\mathbf{z})\le b(\|\mathbf{z}\|)$ for all $(t,\mathbf{z})\in \mathbb{R}^+\times B_{\mathbb{R}^2}(\mathbf{0},r_0)$ where $a,b\in\mathcal{K}$ are defined by $a(y)=y^2$ and $b(y)=2y^2$ for all $y\in\mathbb{R}^+$, it follows from Corollary~\ref{cor:asymptotic stability:V<0} that $\mathbf{x}=\mathbf{0}$ is  uniformly asymptotically stable. Hence, $\mathbf{u}^*=(K,\alpha/\beta)$ is a uniformly asymptotically stable equilibrium of the system~\eqref{eq:model-2}. The graph of asymptotic behavior of solutions near the equilibrium $\mathbf{u}^*=(K,\alpha/\beta)$ is given in Figure~\ref{fig:Asymp-Behav-model2}.
\begin{figure}[H]
 \centering
  \includegraphics[width=13cm]{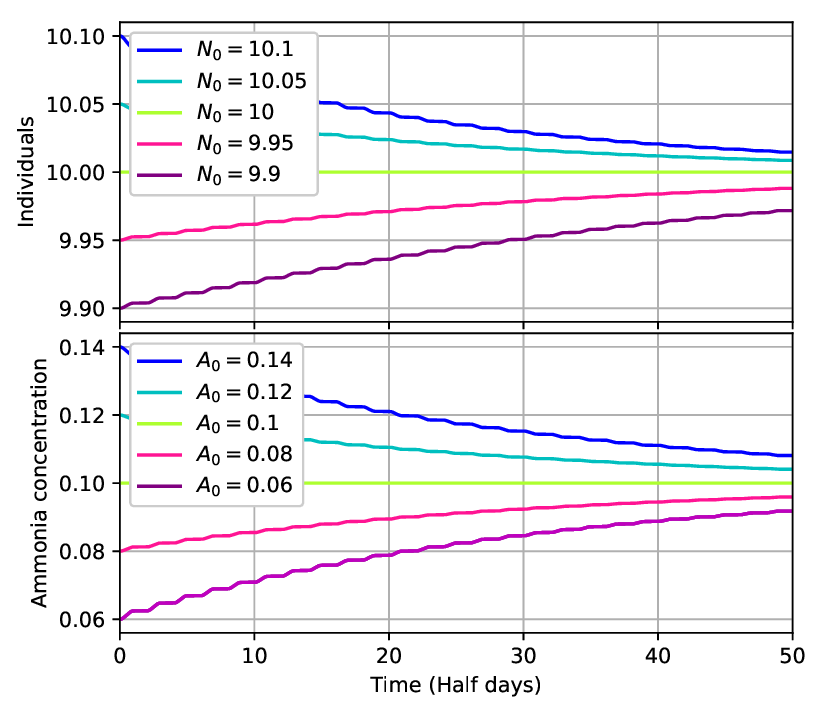}
  \caption{Asymptotic behavior of solutions  of the dynamical system~\eqref{eq:model-2} obtained with a time-discretization step-size of $10^{-3}$, with different initial densities $N_0$ and ammonia concentrations $A_0$ (g/L), where   $\rho=1$, $K=10$, $\alpha=0.001$, and $\beta=0.01$.}\label{fig:Asymp-Behav-model2}
\end{figure}

\section*{Funding}
Lamiae Maia was partially supported by the National Center of Scientific and Technical Research (CNRST) under Grant No. 60UM5R2021, Morocco.
\section{Acknowledgment}
Lamiae Maia would like to express her sincere gratitude towards Professor Marlène Frigon and the {\it Département de mathématiques et de statistique} of the {\it Université de Montréal}, for their warm hospitality and for funding her research stay at the aforementioned department when this article was finalized.


% \bib, bibdiv, biblist are defined by the amsrefs package.
\begin{bibdiv}
\begin{biblist}

\end{biblist}
\end{bibdiv}


\begin{thebibliography}{99}
\bibitem{AFNT} I. Area, F.J. Fern\'{a}ndez, J.J. Nieto, F.A.F. Tojo, {\it Concept and solution of digital twin based on a Stieltjes differential equation}, Math. Methods Appl. Sci. {\bf45} (2022), 7451--7465.

\bibitem{AL} K.B. Athreya, S.N. Lahiri, {\it Measure Theory and Probability Theory}. Springer New York, NY (2006).

\bibitem{BereAnCour}L. Berec, E. Angulo, F. Courchamp, {\it Multiple Allee effects and population management}, Trends Ecol. Evol. {\bf22} (2007), no.~4, 185--191.

\bibitem{BL} M.U. Bikdash and R.A. Layton, {\it An energy-based Lyapunov function for physical systems}, IFAC proc. ser {\bf33} (2000), no. 2, 81–-86.

\bibitem{B} S. Boussiba, {\it Ammonia transport systems in cyanobacteria}, Inorganic nitrogen in plants and microorganisms: Uptake and %Metabolism, 1990, pp. 99-–105.

\bibitem{FedGraMesToon}M. Federson, R. Grau, J. G. Mesquita, E. Toon, {\it Lyapunov stability for measure differential equations and dynamic equations on time scales}, J. Differential Equations, {\bf267} (2019), no.~4192-–4223.

\bibitem{FMarTo-OnFirstandSec} F.J. Fern\'{a}ndez, I. M\'{a}rquez Alb\'{e}s, and F.A. F. Tojo, {\it On first and second order linear Stieltjes differential equations}, J. Math. Anal. Appl. {\bf511} (2022), no.~1, 126010.

\bibitem{FerTojoVill}F.J. Fern\'{a}ndez, F.A. F. Tojo, and C. Villanueva, {\it Compactness criteria for Stieltjes function spaces and applications}, Results Math. {\bf79} (2024), no. 3, 98.

%\bibitem F.J. Fern\'{a}ndez, and F.A. F. Tojo, {\it Numerical solution of Stieltjes differential equations}, Mathematics{\bf8}(2020), %no.~9, 1571.

\bibitem{FP} M. Frigon, R. L\'opez Pouso, {\it Theory and applications of first-order systems of Stieltjes differential equations}. Adv. Nonlinear Anal. {\bf 6} (2017), no.~1, 13--36.

\bibitem{GallGraMes} C.A. Gallegos, R. Grau, and J.G. Mesquita, {\it  Stability, asymptotic and exponential stability for
various types of equations with discontinuous solutions via lyapunov functionals}, J. Differ.
Equations {\bf299} (2021), 256-–283.

\bibitem{GallMarSlav2025}C.A. Gallegos, I. M\'{a}rquez Alb\'{e}s, and A. Slav\'{i}k, {\it A general form of Gronwall inequality with Stieltjes integrals}, J. Math. Anal. Appl. {\bf541} (2025), no. 1, 128674.

\bibitem{H2} W. Hahn, {\it Stability of Motion}, Vol. 138, Springer Berlin, Heidelberg, 1967.

\bibitem{H1}W. Hahn, H.H. Hosenthien, and H. Lehnigk, {\it Theory and applications of Liapunov's direct method}, 1963.

\bibitem{HT} J. Hoffacker, C.C. Tisdell, {\it Stability and instability for dynamic equations on time scales}, Comput. Math.  Appl.,
{\bf 49} (2005), 9–-10, 1327--1334.

\bibitem{Kaymakcalan}B. Kaymakcalan, {\it Lyapunov stability theory for dynamic systems on time scales},  J. Appl. Math. Stoch. Anal.
 {\bf 5} (1992), no~3, 275–-282.

\bibitem{Ko}Y. Ko, {\it An asymptotic stability and a uniform asymptotic stability for functional differential equations}, Proc. Amer. Math. Soc. {\bf119} (1993), no.~2, 535–-540.

\bibitem{ThesisFL}F. Larivière, {\it Sur les solutions d'équations différentielles de Stieltjes du premier et du deuxième ordre}, Mémoire de Maîtrise, Université de Montréal, 2019.

\bibitem{PM} R. L\'opez Pouso, I. M\'arquez Alb\'es, {\it General existence principles for Stieltjes differential equations with applications to mathematical biology}. J.~Differential Equations {\bf 264} (2018), no.~8, 5388--5407.

\bibitem{PM2} R. L\'{o}pez Pouso and I. M\'{a}rquez Alb\'{e}s, {\it Resolution methods for mathematical models based on differential equations with Stieltjes derivatives}, Electron. J. Qual. Theo. {\bf2019} (2019), no.~72, 1-–15.

\bibitem{PM3} R. L\'{o}pez Pouso and I. M\'{a}rquez Alb\'{e}s, {\it Systems of Stieltjes differential equations with several derivators}, Mediterr. J. Math.
{\bf16} (2019), no.~2, 51.

\bibitem{PMM}R. L\'{o}pez Pouso and I. M\'{a}rquez Alb\'{e}s, and G.A. Monteiro, {\it Extremal solutions of systems of measure differential equations and applications in the study of Stieltjes differential problems}, Electron. J. Qual. Theo. {\bf38} (2018), 1–24.

\bibitem{PR} R. L\'opez Pouso, A. Rodr\'{\i}guez, {\it A new unification of continuous, discrete, and impulsive calculus through Stieltjes derivatives}. Real Anal. Exchange {\bf 40} (2014/15), no.~2, 1--35.

\bibitem{L1} A.M. Lyapunov {\it Problème Général de la Stabilité du Mouvement. (AM-17), Volume 17} (1948), available at https://doi.org/10.1515/9781400882311.

\bibitem{MEF1}L. Maia, N. El Khattabi, and M. Frigon, {\it Existence and multiplicity results for first-order Stieltjes differential equations}, Adv. Nonlinear Stud. {\bf22} (2022), no.~1, 684–-710.

\bibitem{MEF2}L. Maia, N. El Khattabi, and M. Frigon, {\it Systems of Stieltjes differential equations and application to a predator-prey model of an exploited
fishery}, Discrete Contin. Dyn. Syst. Ser. A {\bf43} (2023), no.~12, 4244-–4271.

\bibitem{MarThesis} I. M{\'a}rquez Alb{\'e}s, {\it Differential problems with Stieltjes derivatives and applications}, Ph.D. Thesis, Universidade de Santiago de Compostela, 2021.

\bibitem{Mar}I. M{\'a}rquez Alb{\'e}s, {\it  Notes on the linear equation with Stieltjes derivatives}, Electron. J. Qual. Theo. {\bf2021} (2021), no.~42, 1–-18.
\bibitem{MM}I. M\'{a}rquez Alb\'{e}s and G.A. Monteiro, {\it Notes on the existence and uniqueness of solutions of Stieltjes differential equations}, Math. Nachr. \textbf{294} (2021), no. 4, 794–814.
\bibitem{Rudin} W. Rudin, {\it Real and Complex Analysis}, 3rd, McGraw-Hill, Singapore, 1987.

\bibitem{SS} B. Satco and G. Smyrlis, {\it Periodic boundary value problems involving Stieltjes derivatives}, J. Fixed Point Theory Appl. {\bf22} (2020), no.~4, 94.

\bibitem{SS-2} B. Satco and G. Smyrlis, {Applications of Stieltjes derivatives to periodic boundary value inclusions}, Mathematics \textbf{8} (2020), no. 12, 1–23.

\bibitem{SteSuth}P.A. Stephens, W.J. Sutherland, and R.P. Freckleton, {\it What is the allee effect?}, Oikos (1999), 185–-190.

\bibitem{WRSHSG}D.G. Welkie, B.E. Rubin, S. Diamond, R.D. Hood, D.F. Savage, and S.S. Golden, {\it A hard day’s night: cyanobacteria in diel cycles}, Trends Microbiol. {\bf27} (2019), no.~3, 231–-242.

\bibitem{Y2001} T. Yang, {\it Impulsive Control Theory}, Vol. 272, Springer Berlin, Heidelberg, 2001.

\bibitem{YLXD}X. Yang, X. Li, Q. Xi, and P. Duan, {\it Review of stability and stabilization for impulsive delayed systems}, Math Biosci Eng. {\bf15} (2018), no.~6, 1495-–1515.

\bibitem{Yoshizawa} T. Yoshizawa, {\it On the stability of solutions of a system of differential equations}, Mem. College Sci. Univ. Kyoto Ser. A Math. {\bf29} (1955), no.~1, 27-–33.

\bibitem{Zorn} M. Zorn, {\it A remark on method in transfinite algebra}, Bull. Amer. Math. Soc. {\bf41} (1935), no.~10, 667–-670.
\end{thebibliography}
\end{document}